\documentclass[twoside,11pt]{article}

%

%
%
%

\usepackage[preprint]{jmlr2e}

\usepackage{dsfont}
\usepackage{amsmath}
\usepackage{url}
\usepackage{hyperref}
\usepackage{color}
\definecolor{navy}{rgb}{0,0,0.5}
\hypersetup{
    colorlinks=true,
    citecolor=navy,
    linkcolor=blue,
    filecolor=magenta,
    urlcolor=cyan,
}

\let\emptyset\varnothing

\makeatletter
\newcommand*\rel@kern[1]{\kern#1\dimexpr\macc@kerna}
\newcommand*\widebar[1]{%
  \begingroup
  \def\mathaccent##1##2{%
    \rel@kern{0.8}%
    \overline{\rel@kern{-0.8}\macc@nucleus\rel@kern{0.2}}%
    \rel@kern{-0.2}%
  }%
  \macc@depth\@ne
  \let\math@bgroup\@empty \let\math@egroup\macc@set@skewchar
  \mathsurround\z@ \frozen@everymath{\mathgroup\macc@group\relax}%
  \macc@set@skewchar\relax
  \let\mathaccentV\macc@nested@a
  \macc@nested@a\relax111{#1}%
  \endgroup
}
\makeatother

\newtheorem{assumption}{Assumption}

\usepackage{color}


\newcommand{\R}{\mathbf{R}}
\newcommand{\minimize}{\operatorname*{minimize}}
\newcommand{\maximize}{\operatorname*{maximize}}

\newcommand{\argmin}{\operatorname*{arg\,min}}

\newcommand{\dom}{\operatorname{dom}}
\newcommand{\rint}{\operatorname{rint}}
\newcommand{\E}{\operatorname*{\mathbf{E}}}
\newcommand{\prox}{\operatorname{\mathbf{prox}}}

\newcommand{\Proj}{\mathbf{proj}}
\newcommand{\Prob}{\mathrm{Pr}}

\newcommand{\ones}{\mathbf{1}}
\newcommand{\onebb}{\mathds{1}}
\newcommand{\nablas}{\nabla_{\!s}}
\newcommand{\llangle}{\left\langle}
\newcommand{\rrangle}{\right\rangle}

\newcommand{\V}{ V}
\newcommand{\cS}{\mathcal{S}}
\newcommand{\cA}{\mathcal{A}}

\newcommand{\cC}{\mathcal{C}}

\newcommand{\cT}{\mathcal{T}}
\newcommand{\dS}{{|\cS|}}
\newcommand{\dA}{{|\cA|}}
\newcommand{\pistar}{\pi^{\star}}
\newcommand{\pizero}{\pi^{(0)}}
\newcommand{\pii}{\pi^{(i)}}
\newcommand{\piK}{\pi^{(K)}}
\newcommand{\pik}{\pi^{(k)}}
\newcommand{\pikp}{\pi^{(k+1)}}

\newcommand{\xzero}{ x^{(0)}}
\newcommand{\xk}{ x^{(k)}}
\newcommand{\xkp}{ x^{(k+1)}}

\newcommand{\zk}{ z^{(k)}}

\newcommand{\bQ}{\widebar{Q}}
\newcommand{\hQ}{\widehat{Q}}
\newcommand{\Dk}{ D^{\star}_{k}}
\newcommand{\Dkp}{ D^{\star}_{k+1}}
\newcommand{\Dzero}{ D^{\star}_{0}}
\newcommand{\Cstar}{ C^{\star}_{\rho}}


\jmlrheading{1}{2000}{1-48}{4/00}{10/00}{xiao2021}{Lin Xiao}


\firstpageno{1}

\begin{document}

\title{On the Convergence Rates of Policy Gradient Methods}

\author{\name Lin Xiao \email linx@fb.com\\
       \addr Meta AI Research \\
       Seattle, WA 98109, USA}

\editor{TBD}

\maketitle

\begin{abstract}
We consider infinite-horizon discounted Markov decision problems with finite state and action spaces and study the convergence rates of the projected policy gradient method and a general class of policy mirror descent methods, all with direct parametrization in the policy space. 
First, we develop a theory of weak gradient-mapping dominance and use it to prove sharper sublinear convergence rate of the projected policy gradient method. 
Then we show that with geometrically increasing step sizes, a general class of policy mirror descent methods, including the natural policy gradient method and a projected Q-descent method, all enjoy a linear rate of convergence without relying on entropy or other strongly convex regularization.
Finally, we also analyze the convergence rate of an inexact policy mirror descent method and estimate its sample complexity under a simple generative model.
\end{abstract}

\bigskip
\begin{keywords}
discounted Markov decision problem, policy gradient, gradient domination, policy mirror descent, sample complexity.
\end{keywords}

\smallskip

\section{Introduction}
\label{sec:intro}

Markov decision process (MDP) is a fundamental model for sequential decision-making.
In this paper, we consider infinite-horizon, discounted Markov decision problems (DMDPs) with finite state and action spaces. 
They are specified as a 5-tuple $(\cS,\cA, P, R, \gamma)$, where
$\cS$ is a finite state space with cardinality $\dS$,
$\cA$ is a finite action space with cardinality $\dA$,
$P$~is a transition probability function with $P(s'|s,a)$ denoting the probability of transitioning to~$s'$ when taking action~$a$ from state~$s$,
$R:\cS\times\cA\to [0,1]$ is a reward function with $R_{s,a}$ or $R(s,a)$ being the (expected) reward of taking action~$a$ from state~$s$,
and finally $\gamma\in[0,1)$ is a discount factor applied to the reward one-step in the future.

Starting from an initial state $s_0\in\cS$, an agent takes an action $a_t\in\cA$ at each time step $t=0,1,2,\ldots$, which leads to the next state $s_{t+1}$ with probability $P(s_{t+1}|s_t, a_t)$,
and obtains the immediate reward $r_t=R(s_t, a_t)$.
Such interactions generate a \emph{trajectory}
\[
(s_0, a_0, r_0),\, (s_1, a_1, r_1),\, (s_2, a_2, r_2), \, \ldots .
\]
The goal of the agent is to find a \emph{policy} of choosing the actions $a_0,a_1,a_2,\ldots$ that maximizes the discounted cumulative reward
$\E\bigl[\sum_{t=0}^\infty \gamma^t r_t\bigr]$.
Here the expectation is taken with respect to the possible randomness in $s_0$, any randomness in choosing the actions $a_t$, and the randomness of state transitions prescribed by~$P$.

In general, a policy that determines the action at time~$t$ may depends on the whole history of the trajectory up to time~$t$. 
A \emph{stationary policy} $\pi$ specifies a decision rule that depends only on the current state. Specifically, we let $\pi_s\in\Delta(\cA)$ be the decision rule at state~$s$, where $\Delta(\cA)$ denotes the probability simplex supported on~$\cA$, 
and $\pi_{s,a}$ denotes the probability of taking action~$a$ at state~$s$.
The value of a stationary policy $\pi\in\Delta(\cA)^\dS$ starting from an arbitrary state~$s$ is defined as 
\begin{equation}\label{eqn:v-s-def}
\V_s(\pi):=\E
\left[\sum_{t=0}^\infty \gamma^t R(s_t,a_t) \,\Big|\, s_0=s\right],
\end{equation}
where the expectation is taken with respect to $a_t\sim\pi_{s_t}$ and $s_{t+1}\sim P(\cdot|s_t,a_t)$ for all $t\geq 0$. 
We define $\V:\Delta(\cA)^\dS\to\R^\dS$ as a vector-valued function with components $\V_s(\pi)$. 
By the assumption that $R(s,a)\in[0,1]$ for all $(s,a)\in\cS\times\cA$, we immediately have 
\begin{equation}\label{eqn:value-bound}
0\leq \V_s(\pi)\leq\sum_{t=0}^{\infty}\gamma^t =\frac{1}{1-\gamma},
\qquad \forall\,s\in\cS,\quad\forall\,\pi\in\Delta(\cA)^\dS.
\end{equation}

The conventional formulation of DMDP is about maximizing the discounted total reward. 
In this paper, we adopt a minimization formulation in order to better align with conventions in the optimization literature.
To this end, we regard each $R(s,a)\in[0,1]$ as a value measuring \emph{regret} rather than reward.
Given a reward matrix~$R$, we can reset $R(s,a)\gets 1-R(s,a)$ for all $(s,a)\in\cS\times\cA$ to turn it into a regret matrix.
Suppose $\rho\in\Delta(\cS)$ is an arbitrary initial state distribution.
We consider the problem of minimizing 
\begin{equation}\label{eqn:v-rho-def}
\V_\rho(\pi):=\E_{s\sim\rho}\V_s(\pi)
=\E\left[\sum_{t=0}^\infty \gamma^t R(s_t,a_t) \,\Big|\, s_0\sim\rho\right].
\end{equation}
For infinite-horizon DMDPs with finite state and actions spaces, there exists a (deterministic) stationary policy $\pi^\star$ that is simultaneously optimal in minimizing $\V_s(\cdot)$ for all $s\in\cS$
\citep[e.g.,][Section~6.2.4]{Puterman1994book}. 
Such a solution is insensitive to the choice of $\rho$.

In this paper, we focus on \emph{policy gradient methods} for minimizing the weighted value function $\V_\rho$. These methods generate a sequence of policies $\{\pik\}$ through repeated evaluation of the policy gradient $\nabla \V_\mu$, where $\mu\in\Delta(\cS)$ is not necessarily equal to~$\rho$.
The most straightforward variant is the projected policy gradient method,
\begin{equation}\label{eqn:ppg-intro}
\pikp = \Proj_{\Pi}\left(\pik - \eta_k\nabla \V_{\mu}(\pik)\right),
\end{equation}
where $\eta_k$ is the step size, $\Pi:= \Delta(\cA)^{\dS}$ is the set of feasible policies, and $\Proj_\Pi(\cdot)$ denotes projection onto~$\Pi$ in the Euclidean norm.
More generally, policy gradient methods can be derived from the mirror-descent form
\begin{equation}\label{eqn:pmd-intro}
\pikp = \argmin_{\pi\in\Pi} \Bigl\{
\eta_k \bigl\langle \nabla \V_{\mu}(\pik),\,\pi\bigr\rangle
+ D_k(\pi,\pik) \Bigr\},
\end{equation}
where $D_k(\cdot,\cdot)$ is a distance-like function that may depend on $\pik$.
For example, setting $D_k$ as the squared Euclidean distance yields 
the projected policy gradient method~\eqref{eqn:ppg-intro}.
\citet{Shani2020aaai} showed that by setting $D_k$ as an appropriately weighted Kullback-Leibler (KL) divergence, one recovers the natural policy gradient (NPG) method of \citet{Kakade2001NPG}.
In general, we can think of~\eqref{eqn:pmd-intro} as a class of preconditioned policy gradient methods.
The main results of this paper concern the convergence rates of such methods.

\subsection{Previous Work}
\label{sec:previous-work}

Many classical algorithms for DMDP are based on dynamical programming \citep{Bellman1957book}, including value iteration, policy iteration, temporal difference learning and Q-learning
\citep[see, e.g.,][]{Puterman1994book,BertsekasTsitsiklis1996book,SuttonBarto2018}.
Analyses of these methods in the tabular case mostly rely on the contraction property of the Bellman operator, which are difficult to extend with nonlinear function approximation and policy parametrization.  
In contrast, policy gradient methods \citep{Williams1992,Sutton2000PolicyGrad,KondaTsitsiklis2000,Kakade2001NPG}
aim to find a local minimum of an expected value function,
thus are applicable to any differentiable policy parametrization
and admit easy extensions to function approximation.
In particular, they appear to work well when parametrized with modern deep neural networks \citep{Schulman2015TRPO,Schulman2017PPO}.

Despite the long history and empirical successes of policy gradient methods, their convergence properties are not well understood until recently.
For example, it was widely accepted that they converge asymptotically to a stationary point or a local minimum because the objective function is nonconvex in general. 
However, \citet{Fazel2018LQR} show that for linear quadratic control problems, policy gradient methods converge to the global optimal solution despite the nonconvex cost function, thanks to a gradient dominance property \citep{Polyak1963}. 
\citet{Agarwal2021jmlr} derive a variational gradient-dominance property
and use it to obtain global convergence of the projected policy gradient method~\eqref{eqn:ppg-intro}.
\citet{BhandariRusso2019global} identify more general structural properties of policy gradient methods to ensure gradient domination and hence convergence to global optimum.

Using direct policy parametrization (over $\pi\in\Delta(\cA)^\dS$),
\citet{Agarwal2021jmlr} show that the projected policy gradient method~\eqref{eqn:ppg-intro} converges to a global optimum at an $O(1/\sqrt{k})$ sublinear rate.
Specifically, the number of iterations to obtain $\V_\rho(\pik)-\V_\rho^\star\leq\epsilon$ is
\begin{equation}\label{eqn:ppg-rate-intro}
O\biggl(\frac{\dS\dA}{(1-\gamma)^6\,\epsilon^2}\left\|\frac{d_\rho(\pi^\star)}{\mu}\right\|_\infty^2\biggr),
\end{equation}
where $d_\rho(\pistar)\in\Delta(\cS)$ is a \emph{discounted state-visitation distribution}
and $\bigl\|d_\rho(\pistar)/\mu\bigr\|_\infty$ is a \emph{distribution mismatch coefficient} (see Section~\ref{sec:dmc} for definition and explanation).
\citet{Junyu2020NeurIPS} develop a variational policy gradient framework and use it to show that the projected policy gradient method converges to global optimum at a faster $O(1/k)$ rate.
In both cases, the constants in the iteration complexity are very large and depend on the Lipschitz constant characterizing the smoothness of the objective function. 

\citet{Shani2020aaai} show that the natural policy gradient (NPG) method
\citep{Kakade2001NPG} can be cast as a special case of policy mirror descent method~\eqref{eqn:pmd-intro} and has an $O(1/\sqrt{k})$ convergence rate.
\citet{Agarwal2021jmlr} improve the convergence rate of NPG to $O(1/k)$; more concretely, the number of iterations to obtain $\V_\rho(\pik)-\V_\rho^\star\leq\epsilon$ is
\begin{equation}\label{eqn:npg-sublinear}
\frac{2}{(1-\gamma)^2\epsilon},
\end{equation}
which is independent of the dimensions $\dS$ and $\dA$ or any distribution mismatch coefficient.
Interestingly, the step sizes that guarantee such a rate can be chosen arbitrarily large, regardless of the Lipschitz constant of the policy gradient. 

With entropy regularization (added to the DMDP objective), \citet{Cen2020NPGentropy} show that the NPG method has linear (geometric) convergence. 
Their approach rely on the contraction property of a generalized Bellman operator and the convergence guarantees are in terms of the infinity norm of the ``soft'' $Q$-functions. With appropriate choice of the regularization parameter and step size, they obtain iteration complexity on the order of
\begin{equation}\label{eqn:npg-linear}
\frac{1}{1-\gamma}\log\frac{1}{(1-\gamma)\epsilon}.
\end{equation}
\citet{Lan2021pmd} proposes a general policy mirror descent method that is similar to~\eqref{eqn:pmd-intro} with either convex or strongly convex regularizations.
He focuses on the case of minimizing $\V_{\rho^\star}$ where $\rho^\star$ is the stationary distribution of the MDP under the optimal policy $\pistar$, which avoids any distribution mismatch coefficient in the analysis.
In order to guarantee
$\V_{\rho^\star}(\pik)-\V_{\rho^\star}^\star\leq\epsilon$, 
\citet{Lan2021pmd} obtains iteration complexity on the orders of~\eqref{eqn:npg-sublinear} and~\eqref{eqn:npg-linear} for the settings without and with entropy regularization, respectively.
More interestingly, \citet{Lan2021pmd} also obtained linear convergence for the un-regularized DMDP using diminishing regularization combined with increasing step sizes (while maintaining a constant product of the two).

More recently, 
\citet{Zhan2021Regularized} extend the framework of \citet{Lan2021pmd} to accommodate a broader class of convex regularizers including those that are nonsmooth.
For un-regularized DMDP, 
\citet{Khodadadian2021} show that the NPG method can obtain linear convergence with an adaptive step-size rule, and
\citet{BhandariRusso2021linear} show that several variants of policy gradient methods has linear convergence with exact line search.

For the exact policy gradient method with softmax parametrization, \citet{Agarwal2021jmlr} show that it converges asymptotically to a global optimum, and attains an $O(1/\sqrt{k})$ rate with log barrier regularization.
\citet{Mei2020icml} derive an $O(1/k)$ convergence rate 
and \citet{Mei2021icml} further improve it to linear convergence by exploiting non-uniform variants of the smoothness and gradient dominance properties. However, these fast rates are associated with problem-dependent constants that can be very large \citep{LiWeiChiGuChen2021}.

\subsection{Contributions and Outline}

In this paper, we present a systematic study of policy gradient methods with direct policy parametrization, focusing on their convergence rates for minimizing $\V_\rho$ over $\pi\in\Delta(\cA)^\dS$.

Section~\ref{sec:dmdp} contains an overview of structural properties of DMDP that are well-known but essential for the main results of the paper.

In Section~\ref{sec:ppg}, we develop a theory of \emph{weak gradient-mapping domination} for general nonconvex composite optimization, and use it to obtain an $O(1/k)$ convergence rate for the projected policy gradient method. Concretely, our result on iteration complexity replaces $\epsilon^{-2}$ in~\eqref{eqn:ppg-rate-intro} with $\epsilon^{-1}$ and $(1-\gamma)^{-6}$ with $(1-\gamma)^{-5}$.
Although this result is the same as the one obtained by \citet{Junyu2020NeurIPS}, our analysis are quite different.
\citet{Junyu2020NeurIPS} exploit the bijection structure of the primal-dual DMDP formulations, while we derive this result as a special case of nonconvex optimization with weak gradient-mapping domination which, to our best knowledge, is new and of independent interest.

In Section~\ref{sec:pmd}, we study exact policy mirror descent methods of the form~\eqref{eqn:pmd-intro}.
First, we show that with a constant step size (which can be arbitrarily large), they obtain the same dimension-free iteration complexity~\eqref{eqn:npg-sublinear}. This result extend the one of \citet{Agarwal2021jmlr} on NPG with KL-divergence to a general class of Bregman divergences, including a projected $Q$-descent method derived with squared Euclidean distance.
Second, we show that with geometrically increasing step sizes, as simple as $\eta_{k+1}=\eta_k/\gamma$, policy mirror descent methods enjoy linear convergence \emph{without} relying on any regularization. Specifically, their iteration complexity for reaching 
$\V_\rho(\pik)-\V_\rho^\star\leq\epsilon$ is 
\[
\frac{1}{1-\gamma}\left\|\frac{d_\rho(\pistar)}{\rho}\right\|_\infty
\log\frac{2}{(1-\gamma)\epsilon} .
\]
If $\rho$ is set to be the stationary distribution under the optimal policy~$\pistar$,
then the distribution mismatch coefficient $\bigl\|d_\rho(\pistar)/\rho\bigr\|_\infty=1$ and we recover~\eqref{eqn:npg-linear}. 
In addition, we discuss conditions for superlinear convergence and make connections with the classical Policy Iteration method.

In Section~\ref{sec:inexact-pmd}, we investigate the iteration complexity of inexact policy mirror descent methods and show that the geometrically increasing step sizes do not cause instability even with errors in evaluating the policy gradients or $Q$-functions.
They converge with the same linear rate up to an asymptotic error floor.
With a simple $Q$-estimator by repeated simulation of truncated trajectories, we obtain a sample complexity of
\[
\widetilde{O}\left(\frac{\dS\dA}{(1-\gamma)^8\,\epsilon^2}
\left\|\frac{d_\rho(\pistar)}{\rho}\right\|_\infty^3 \right),
\]
where the notation $\widetilde{O}(\cdot)$ hides poly-logarithmic factors of $\dS\dA$, $1/(1-\gamma)$ and $1/\epsilon$.

Finally, in Section~\ref{sec:conclusion}, we discuss the limitations of our work and possible extensions.

\section{Preliminaries on DMDP}
\label{sec:dmdp}

In this section, we overview the structural properties of DMDP that are essential for the developments in later sections.
We start with a few definitions.
Let $\Delta(\cS)$ denote the probability simplex defined over the state space~$\cS$, i.e., 
\[
\Delta(\cS) = \Bigl\{ p\in\R^\dS~\big|~\textstyle\sum_{s\in\cS}p_s=1,~p_s\geq 0~\textrm{for all}~ s\in\cS\Bigr\}.
\]
Similarly, $\Delta(\cA)$ denotes the probability simplex over the action space~$\cA$.
The set of admissible policies for DMDP is defined as
\[
\Pi := \Delta(\cA)^{\dS} = \left\{\pi=\{\pi_s\}_{s\in\cS}~\big|~
\pi_s\in\Delta(\cA) ~\mbox{for all}~ s\in\cS\right\}.
\]
With slight abuse of notation, we define the following functions of~$\pi\in\Pi$: 
\begin{itemize}
\item $P:\Pi\to\R^{\dS\times\dS}$: a matrix function with entries $P_{s,s'}(\pi) = \sum_{a\in\cA}\pi_{s,a}P(s'|s,a)$;
\item $r:\Pi\to \R^\dS$: a vector function with components $r_s(\pi)=\sum_{a\in\cA}\pi_{s,a}R_{s,a}$.
\end{itemize}
%

Using the definitions above, 
the value function $\V:\Pi\to\R^\dS$, whose components $\V_s$ are defined in~\eqref{eqn:v-s-def}, admits the following analytic form
\citep[see, e.g.,][Section~6.1]{Puterman1994book}
\begin{equation}\label{eqn:v-def-P-r}
\V(\pi) = \sum_{t=0}^\infty\gamma^t P(\pi)^t\, r(\pi) = \bigl(I-\gamma P(\pi)\bigr)^{-1}r(\pi).
\end{equation}
Since $P(\pi)$ is a row stochastic matrix and $0\leq\gamma<1$, 
the spectral norm of $\gamma P(\pi)$ is strictly less than one (by the Perron-Frobenius theorem) and thus $I-\gamma P(\pi)$ is always invertible. 
%
Given $\rho\in\Delta(\cS)$, the weighted value function $\V_\rho$ defined in~\eqref{eqn:v-rho-def} can be written as
\begin{equation}\label{eqn:v-rho-analytic}
\V_\rho(\pi)
=\rho^T \V(\pi) 
=\rho^T(I-\gamma P(\pi))^{-1}r(\pi) .
\end{equation}
Here we treat~$\rho$ as a column vector and
use matrix multiplication conventions. 

\subsection{Distribution Mismatch Coefficient}
\label{sec:dmc}

Starting from $s\in\cS$, the discounted state-visitation distribution under a policy $\pi$ is a vector $d_s(\pi)\in\Delta(\cS)$ whose components are defined as 
\begin{equation}\label{eqn:dsv-def}
d_{s,s'}(\pi):=(1-\gamma)\sum_{t=0}^\infty\gamma^t\,\Prob^{\pi}\bigl(s_t=s'\,|\,s_0=s\bigr),
\qquad \forall\,s'\in\cS.
\end{equation}
The coefficient $1-\gamma$ ensures that $\sum_{s'\in\cS}d_{s,s'}(\pi)=1$. 
In fact, $d_{s,s'}(\pi)$ is the $(s,s')$ entry of the matrix $(1-\gamma)(I-\gamma P(\pi))^{-1}$.
In other words, if we define $e_s\in\R^\dS$ with components $e_{s,s'}=1$ if $s=s'$ and $0$ otherwise, then we have
\begin{equation}\label{eqn:dsv-matrix}
d_{s,s'}(\pi) = (1-\gamma) e_s^T \bigl(I-\gamma P(\pi)\bigr)^{-1} e_{s'},
\qquad \forall\,s,s'\in\cS.
\end{equation}
Given an initial state distribution $\rho\in\Delta(\cS)$, we define 
$d_\rho(\pi)\in\Delta(\cS)$ with components
\[
d_{\rho,s'}(\pi) =\E_{s\sim\rho}d_{s,s'}(\pi) = \sum_{s\in\cS}\rho_s d_{s,s'}(\pi) .
\]
Some useful facts from the above definitions are:
\begin{equation}\label{eqn:dsv-lower-bound}
d_{s,s}(\pi) \geq 1-\gamma\qquad\mbox{and}\qquad 
d_{\rho,s}(\pi)\geq(1-\gamma)\rho_s,\qquad \forall\,s\in\cS.
\end{equation}

For any $\rho,\mu\in\Delta(\cS)$, we define the distribution mismatch of~$\rho$ from~$\mu$ as
\[
\left\|\frac{\rho}{\mu}\right\|_\infty
:=\max_{s\in\cS}\frac{\rho_s}{\mu_s},
\]
with the convention $0/0=1$.
This is an asymmetric measure of mismatch and
it is finite if and only if the support (set of indices with nonzero entries) of~$\mu$ contains that of~$\rho$.
If $\mu$ is the uniform distribution, then the mismatch is bounded by $\dS$.

The convergence properties of policy gradient methods often depend on the distribution mismatch coefficients between two discounted state-visitation distributions \citep[e.g.,][]{Kakade2002icml,Agarwal2021jmlr}.
According to~\eqref{eqn:dsv-lower-bound}, we have
for any $\rho,\mu\in\Delta(\cS)$ and $\pi,\pi'\in\Pi$,
\begin{equation}\label{eqn:dmc-ub}
\left\|\frac{d_\rho(\pi)}{d_\mu(\pi')}\right\|_\infty
\leq\frac{1}{1-\gamma}\left\|\frac{d_\rho(\pi)}{\mu}\right\|_\infty.
\end{equation}
Our results in this paper mostly concern the case with $\rho=\mu$ and $\pi=\pistar$, i.e., 
\[
\Cstar := \left\|\frac{d_\rho(\pistar)}{\rho}\right\|_\infty.
\]
In order for $\Cstar$ to be finite, it suffices to assume $\rho>0$, which means $\rho_s>0$ for all $s\in\cS$.

The distribution mismatch coefficient is closely related to 
the \emph{concentrability coefficients} in the analysis of approximate dynamic programming algorithms \citep{Munos2003,Munos2005,SzepesvariMunos2008}.
In fact, $C_\rho^\star$ is considered the ``best'' one among all concentrability coefficients in the sense that 
it does not impose any restrictions on the MDP dynamics
and it can be finite when other concentrability coefficients are infinite \citep{Scherrer2014}.
See \citet[][Section~2]{Agarwal2021jmlr} for further discussions.

If $\rho$ is chosen as the stationary distribution of the MDP under the optimal policy $\pistar$, denoted as $\rho^\star$, then we have 
$d_{\rho^\star}(\pistar)=\rho^\star$ and hence $C^\star_{\rho^\star}=1$.
This is the setting adopted by \citet{LiuCaiYangWang2019NeurIPS} and
\citet{Lan2021pmd}, which leads to simplified analysis for minimizing $\V_{\rho^\star}$. 
For DMDP with entropy regularization \citep{Lan2021pmd,Cen2020NPGentropy}, the resulting $\rho^\star$ always have full support over~$\cS$.
However, in general $\rho^\star$ may not have full support over~$\cS$ unless the underlying MDP is \emph{ergodic} \citep[Section~A.2]{Puterman1994book}.

\subsection{$Q$-functions, Policy Gradient and Performance Difference Lemma}

For each pair $(s,a)\in\cS\times\cA$,
the state-action value function $Q_{s,a}:\Pi\to\R$ is defined as
\begin{equation}\label{eqn:Q-def}
Q_{s,a}(\pi) :=\E
\left[\sum_{t=0}^\infty \gamma^t R(s_t,a_t) \,\Big|\, s_0=s,\,a_0=a\right],
\end{equation}
where the expectation is taken with respect to $a_t\sim\pi_{s_t}$ and $s_{t+1}\sim P(\cdot|s_t,a_t)$ for all $t\geq 0$. 
It is straightforward to verify that 
\begin{equation}\label{eqn:Q-v-relation}
Q_{s,a}(\pi) = R_{s,a} + \gamma\sum_{s'\in\cS}P(s'|s,a) \V_{s'}(\pi).
\end{equation}
Let $Q_s(\pi)\in\R^\dA$ denote the vector with components $Q_{s,a}(\pi)$ for all $a\in\cA$. 
Then,
\begin{equation}\label{eqn:v-Q-relation}
\V_s(\pi) = \sum_{a\in\cA}\pi_{s,a} Q_{s,a}(\pi) = \langle \pi_s,Q_s(\pi)\rangle,
\end{equation}
where $\langle\cdot,\cdot\rangle$ denotes the inner product of two vectors.

Policy gradients refer to the gradients of the value functions $\V_s(\pi)$ and $\V_\rho(\pi)$.
We can obtain their expressions as special cases of the policy gradient theorem \citep{Sutton2000PolicyGrad} which covers the general case with policy parametrization.
For easy reference, we give a simple, self-contained derivation in the Appendix (Section~\ref{sec:apdx:policy-grad}).
Specifically, we have
\begin{align}
\nablas \V_\rho(\pi)
&:=\frac{\partial \V_\rho(\pi)}{\partial \pi_s}
 = \frac{1}{1-\gamma} d_{\rho,s}(\pi) Q_{s}(\pi),
\label{eqn:policy-grad-s}
\end{align}
and $\nabla \V_\rho$ is the concatenation of $\nablas \V_\rho$ for all $s\in\cS$.
In other words, policy gradients are weighted $Q$-functions where the weights are block-diagonal and proportional to the discounted state-visitation probabilities.

A fundamental result for analyzing DMDP and related algorithms is the
performance difference lemma of \citet{Kakade2002icml}.
In this paper, we mostly rely on the following variant, which has appeared in \citet{LiuCaiYangWang2019NeurIPS} and \citet{Lan2021pmd}.
For completeness, here we provide an alternative proof.

\begin{lemma}[Performance difference lemma]\label{lem:pdl}
For any $\pi,\tilde{\pi}\in\Pi$, it holds that
\begin{equation}\label{eqn:pdl-s}
\V_s(\pi) - \V_s(\tilde\pi) = \frac{1}{1-\gamma} \E_{s'\sim d_s(\pi)} 
\left\langle Q_{s'}(\tilde\pi), \pi_{s'} - \tilde\pi_{s'}\right\rangle,
\qquad\forall\,s\in\cS.
\end{equation}
\end{lemma}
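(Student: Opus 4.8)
The plan is to work directly with the analytic matrix form of the value function in \eqref{eqn:v-def-P-r}, rather than unrolling trajectories. From $\V(\pi)=(I-\gamma P(\pi))^{-1}r(\pi)$ one obtains the Bellman fixed-point identity $\V(\pi)=r(\pi)+\gamma P(\pi)\V(\pi)$, and likewise $\V(\tilde\pi)=r(\tilde\pi)+\gamma P(\tilde\pi)\V(\tilde\pi)$. Subtracting these and inserting $\pm\,\gamma P(\pi)\V(\tilde\pi)$, I would write
\[
\V(\pi)-\V(\tilde\pi)=\gamma P(\pi)\bigl(\V(\pi)-\V(\tilde\pi)\bigr)+\bigl(r(\pi)+\gamma P(\pi)\V(\tilde\pi)-\V(\tilde\pi)\bigr),
\]
so that $\bigl(I-\gamma P(\pi)\bigr)\bigl(\V(\pi)-\V(\tilde\pi)\bigr)=r(\pi)+\gamma P(\pi)\V(\tilde\pi)-\V(\tilde\pi)=:g$, i.e. $\V(\pi)-\V(\tilde\pi)=\bigl(I-\gamma P(\pi)\bigr)^{-1}g$ with $g\in\R^\dS$.

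Next I would identify the components of $g$. By the definitions of $r(\pi)$ and $P(\pi)$ together with \eqref{eqn:Q-v-relation}, the $s$-th component of $r(\pi)+\gamma P(\pi)\V(\tilde\pi)$ equals $\sum_{a\in\cA}\pi_{s,a}\bigl(R_{s,a}+\gamma\sum_{s'\in\cS}P(s'|s,a)\V_{s'}(\tilde\pi)\bigr)=\sum_{a\in\cA}\pi_{s,a}Q_{s,a}(\tilde\pi)=\langle\pi_s,Q_s(\tilde\pi)\rangle$; combined with \eqref{eqn:v-Q-relation}, which gives $\V_s(\tilde\pi)=\langle\tilde\pi_s,Q_s(\tilde\pi)\rangle$, this yields $g_s=\langle Q_s(\tilde\pi),\,\pi_s-\tilde\pi_s\rangle$. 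Finally, to pass from the resolvent to the discounted state-visitation distribution, I would left-multiply by $e_s^T$ and invoke \eqref{eqn:dsv-matrix} in the form $e_s^T\bigl(I-\gamma P(\pi)\bigr)^{-1}=\tfrac{1}{1-\gamma}d_s(\pi)^T$, giving
\[
\V_s(\pi)-\V_s(\tilde\pi)=e_s^T\bigl(I-\gamma P(\pi)\bigr)^{-1}g=\frac{1}{1-\gamma}\sum_{s'\in\cS}d_{s,s'}(\pi)\,g_{s'}=\frac{1}{1-\gamma}\E_{s'\sim d_s(\pi)}\langle Q_{s'}(\tilde\pi),\pi_{s'}-\tilde\pi_{s'}\rangle,
\]
which is \eqref{eqn:pdl-s}.

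I do not expect any serious obstacle. The only step needing a bit of care is the add-and-subtract trick that converts the difference of two \emph{distinct} fixed-point equations into a single resolvent applied to the $Q$-gap vector $g$, and making sure one consistently uses the transition operator $P(\pi)$ of the \emph{first} policy, so that $d_s(\pi)$ (and not $d_s(\tilde\pi)$) appears in the final formula. As an alternative I could give the classical telescoping argument: expand $\V_s(\pi)$ along a trajectory generated by $\pi$ and, at step $t$, add and subtract $\gamma^t\V_{s_t}(\tilde\pi)$; summing the telescoping series and recognizing $(1-\gamma)\sum_{t\ge 0}\gamma^t\,\Prob^{\pi}(s_t=\cdot\mid s_0=s)$ as $d_s(\pi)$ reproduces the same identity, but the matrix derivation is shorter given the machinery already set up in Section~\ref{sec:dmdp}.
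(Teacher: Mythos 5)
Your proof is correct and follows essentially the same route as the paper: both reduce the difference $\V(\pi)-\V(\tilde\pi)$ to $\bigl(I-\gamma P(\pi)\bigr)^{-1}u$ with $u_s=\langle Q_s(\tilde\pi),\pi_s-\tilde\pi_s\rangle$ (your $g$ is exactly the paper's $u$) and then conclude via \eqref{eqn:dsv-matrix}. The only difference is cosmetic—you start from the matrix Bellman fixed-point equation while the paper starts from the componentwise identity \eqref{eqn:v-Q-relation}—so there is nothing substantive to add.
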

\begin{proof}
Using the relation~\eqref{eqn:v-Q-relation} on both $\pi$ and $\tilde\pi$ and the definition of $Q$-functions, we obtain
\begin{align*}
\V_s(\pi) - \V_s(\tilde\pi) 
&= \bigl\langle Q_s(\pi),\pi_s \bigr\rangle - \bigl\langle Q_s(\tilde\pi),\tilde\pi_s\bigr\rangle \\
&= \bigl\langle Q_s(\tilde\pi),\pi_s-\tilde\pi_s\bigr\rangle 
+\bigl\langle Q_s(\pi)-Q_s(\tilde\pi),\pi_s \bigr\rangle  \\
&= \bigl\langle Q_s(\tilde\pi),\pi_s-\tilde\pi_s\bigr\rangle 
 +\gamma \sum_{s\in\cS} \pi_{s,a}\sum_{s'\in\cS} P(s'|s,a)\bigl(\V_{s'}(\pi)-\V_{s'}(\Tilde{\pi})\bigr),
\qquad \forall\,s\in\cS.
\end{align*}
Define $u\in\R^\dS$ with components 
$u_s= \bigl\langle Q_s(\tilde\pi),\pi_s-\tilde\pi_s\bigr\rangle$. 
Then the above result leads to
\begin{align*}
\V(\pi) - \V(\tilde\pi) = u + \gamma P(\pi)\bigl(\V(\pi)-\V(\Tilde{\pi})\bigr),
\end{align*}
which further implies
$$\V(\pi) - \V(\tilde\pi) = \bigl(I-\gamma P(\pi)\bigr)^{-1} u.$$
Using the expression of $d_{s,s'}(\pi)$ in~\eqref{eqn:dsv-matrix}, we write the above equality component-wise as
\[
\V_s(\pi) - \V_s(\tilde\pi) 
= e_s^T\bigl(I-\gamma P(\pi)\bigr)^{-1} u
= \frac{1}{1-\gamma}\sum_{s'\in\cS} d_{s,s'}(\pi)\, u_{s'},
\]
which is the same as~\eqref{eqn:pdl-s}.
\end{proof}%

The weighted version of the performance difference lemma is,
\begin{equation}\label{eqn:pdl}
\V_\rho(\pi) - \V_\rho(\tilde\pi) = \frac{1}{1-\gamma} \E_{s'\sim d_\rho(\pi)} \left\langle Q_{s'}(\tilde\pi), \pi_{s'} - \tilde\pi_{s'}\right\rangle,
\qquad\forall\,\pi,\tilde\pi\in\Pi.
\end{equation}
Considering the expression of policy gradient in~\eqref{eqn:policy-grad-s}, the above characterization resembles that of a linear function
(precisely so if the expectation were taken with respect to $s'\sim d_\rho(\tilde\pi)$ instead of $s'\sim d_\rho(\pi)$).
The performance difference lemma is directly responsible for variational gradient domination \citep[Lemma~4]{Agarwal2021jmlr}, which is a convexity-like property, and also for descent with arbitrarily large step sizes (see Section~\ref{sec:pmd}), which is a concavity-like property.


\section{Projected Policy Gradient Method}
\label{sec:ppg}

In this section, we analyze the projected policy gradient method for solving the problem
\begin{equation}\label{eqn:min-v-rho}
\minimize_{\pi\in\Pi}~ \V_\rho(\pi),
\end{equation}
where $\V_\rho(\pi)$ is defined in~\eqref{eqn:v-rho-def} or equivalently~\eqref{eqn:v-rho-analytic}.
We assume that the policy gradients are computed with respect to an initial state distribution~$\mu$, which may be different from the performance evaluation distribution~$\rho$.

Starting from an initial policy $\pizero\in\Pi$, 
the projected policy gradient method generates a sequence $\pik$ for $k=1,2,\ldots$ as follows:
\begin{equation}\label{eqn:ppg}
\pikp = \Proj_{\Pi}\left(\pik - \eta_k\nabla \V_{\mu}(\pik)\right),
\end{equation}
where $\eta_k$ is the step size and $\Proj_\Pi(\cdot)$ denotes projection onto~$\Pi$ in Euclidean norm, i.e.,
$\Proj_\Pi(\pi) = \argmin_{\pi'\in\Pi}\|\pi'-\pi\|_2^2$.
Since $\Pi=\Delta(\cA)^\dS$ is a Cartesian product, the projections associated with different states can be done separately:
\begin{equation}\label{eqn:ppg-s}
\pikp_s = \Proj_{\Delta(\cA)}\left(\pik_s - \eta_k\nablas \V_{\mu}(\pik)\right), \qquad s\in\cS,
\end{equation}
where $\nablas \V_\mu$ is given by~\eqref{eqn:policy-grad-s} with $\rho$ replaced by~$\mu$.

\citet[Theorem~5]{Agarwal2021jmlr} show that with a constant step size $\eta_k = \frac{(1-\gamma)^3}{2\gamma\dA}$ for all $k\geq 0$, the projected policy gradient method converges at an $O\bigl(1/\sqrt{k}\bigr)$ rate. 
More precisely, 
\begin{equation}\label{eqn:ppg-slow-rate}
\min_{0<k\leq K}\left\{\V_\rho(\pik) - \V^\star_\rho\right\} \leq\epsilon
\quad\mbox{whenever}\quad
K > \frac{64\gamma\dS\dA}{\epsilon^2\,(1-\gamma)^6}\left\|\frac{d_\rho(\pi^\star)}{\mu}\right\|_\infty^2.
\end{equation}
The following two ingredients are key to their analysis.
\begin{itemize}
\item \textbf{Smoothness} \cite[Lemma~54]{Agarwal2021jmlr}: 
For any $\pi,\pi'\in\Pi$, it holds that
\begin{equation}\label{eqn:val-func-smooth}
\left\|\nabla \V_\rho(\pi)-\nabla \V_\rho(\pi')\right\|_2\leq\frac{2\gamma\dA}{(1-\gamma)^3}\left\|\pi-\pi'\right\|_2.
\end{equation}
\item \textbf{Variational gradient domination} \cite[Lemma~4]{Agarwal2021jmlr}:
For any $\pi\in\Pi$,
\begin{equation}\label{eqn:var-grad-dom}
\V_{\rho}(\pi) - \V_{\rho}^\star \leq \frac{1}{1-\gamma}\left\|\frac{d_{\rho}(\pi^\star)}{\mu}\right\|_\infty 
\max_{\pi'\in\Pi}\,\bigl\langle\nabla \V_{\mu}(\pi),\,\pi-\pi'\bigr\rangle.
\end{equation}
Here ``variational'' refers to the term $\max_{\pi'\in\Pi}\langle\nabla \V_\mu(\pi),\pi-\pi'\rangle$, which is different from $\|\nabla \V_\mu(\pi)\|$ as in gradient dominance conditions for unconstrained optimization.
\end{itemize}
Based on the same two results above, we show that the projected policy gradient method enjoys a faster $O(1/k)$ convergence rate. 
The following theorem holds for the case $\rho=\mu$.

\begin{theorem}\label{thm:ppg-rate}
Suppose $\rho=\mu$ 
and the step size $\eta_k = \frac{(1-\gamma)^3}{2\gamma\dA}$ for all $k\geq 0$. Then the projected policy gradient method~\eqref{eqn:ppg} generates a sequence of policies $\pik$ satisfying
\[
\V_\rho(\pik) - \V^\star_\rho 
~\leq~
\frac{128\dS\dA}{k\,(1-\gamma)^5} 
\left\|\frac{d_\rho(\pi^\star)}{\rho}\right\|^2_\infty.
\]
\end{theorem}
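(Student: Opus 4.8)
The plan is to feed the two ingredients recalled just above --- the smoothness estimate~\eqref{eqn:val-func-smooth}, which identifies $L:=\frac{2\gamma\dA}{(1-\gamma)^3}$ as a Lipschitz constant for $\nabla\V_\rho$ and hence makes the prescribed step size $\eta:=\eta_k=\frac{(1-\gamma)^3}{2\gamma\dA}$ equal to $1/L$, and the variational gradient domination~\eqref{eqn:var-grad-dom}, which (since here $\rho=\mu$) reads $\V_\rho(\pik)-\V_\rho^\star\le\frac{\Cstar}{1-\gamma}\,\phi(\pik)$ with $\phi(\pi):=\max_{\pi'\in\Pi}\langle\nabla\V_\rho(\pi),\,\pi-\pi'\rangle$ --- into an abstract ``weak gradient-mapping domination'' argument for an $L$-smooth function on the convex set $\Pi$. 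Throughout write $f:=\V_\rho$ and $\delta_k:=\V_\rho(\pik)-\V_\rho^\star\ge0$; let $G_k:=\frac1\eta(\pik-\pikp)$ be the gradient mapping at $\pik$ (so $\pikp=\Proj_\Pi(\pik-\eta\nabla f(\pik))$ and $\|\pik-\pikp\|_2=\eta\|G_k\|_2$), and let $D$ denote the Euclidean diameter of $\Pi=\Delta(\cA)^\dS$, which equals $\sqrt{2\dS}$.

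The core computation is a single inequality valid for every $\pi'\in\Pi$:
\[
\langle\nabla f(\pik),\,\pik-\pi'\rangle \;\le\; (\delta_k-\delta_{k+1}) + \langle G_k,\,\pik-\pi'\rangle - \tfrac1{2L}\|G_k\|_2^2 .
\]
It follows by expanding $f(\pikp)$ with the descent inequality from $L$-smoothness, writing $\pikp-\pik=(\pikp-\pi')+(\pi'-\pik)$, invoking the variational inequality $\langle\nabla f(\pik)-G_k,\,\pi'-\pikp\rangle\ge0$ that characterizes the projection, and substituting $\eta=1/L$ so that all the $\|G_k\|_2^2$ terms collapse. Specializing to $\pi'=\pik$ recovers the projected-gradient descent inequality $\delta_k-\delta_{k+1}\ge\frac1{2L}\|G_k\|_2^2\ge0$ (in particular $\{\delta_k\}$ is nonincreasing), whereas maximizing over $\pi'\in\Pi$ and discarding the nonpositive term $-\frac1{2L}\|G_k\|_2^2$ gives the \emph{weak gradient-mapping domination} estimate
\[
\phi(\pik)\;\le\;(\delta_k-\delta_{k+1})+\|G_k\|_2\,D .
\]

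Combining the latter with $\|G_k\|_2\le\sqrt{2L(\delta_k-\delta_{k+1})}$ (from the descent inequality) and with the variational gradient domination, and abbreviating $\Delta_k:=\delta_k-\delta_{k+1}$, yields
\[
\delta_k\;\le\;\frac{\Cstar}{1-\gamma}\Bigl(\Delta_k+D\sqrt{2L\,\Delta_k}\Bigr).
\]
When the first term inside the parentheses is a constant fraction of $\delta_k$ this already forces a geometric drop and is harmless; in the complementary case it can be absorbed, leaving a recursion of the familiar form $\Delta_k\ge c\,\delta_k^2$ with $c$ of order $(1-\gamma)^5/\bigl(\gamma\,\dS\dA\,(\Cstar)^2\bigr)$ (precisely, $c\asymp 1/(L\,D^2(\Cstar)^2/(1-\gamma)^2)$). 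A routine induction using $\delta_{k+1}^{-1}\ge\delta_k^{-1}+c\,\delta_k/\delta_{k+1}\ge\delta_k^{-1}+c$, primed by the crude a~priori bound $\delta_k\le\frac1{1-\gamma}$ to dispatch the first iterates, then gives $\delta_k\le\frac1{ck}$. Unwinding $c$ and bookkeeping the numerical factors (the $2$'s from $\eta=1/L$ and $D^2=2\dS$, plus the slack needed in the induction), and using $\gamma<1$ to clear the $\gamma$ in the numerator, produces exactly $\frac{128\,\dS\dA}{k(1-\gamma)^5}\bigl\|d_\rho(\pistar)/\rho\bigr\|_\infty^2$.

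I expect the main obstacle to be pinning down the abstract weak gradient-mapping domination step: showing that the \emph{variational} quantity $\phi(\pik)$ --- rather than $\|\nabla f(\pik)\|_2$, which need not be small even near a stationary point of a \emph{constrained} problem --- is controlled by the per-iteration progress $\delta_k-\delta_{k+1}$ together with the gradient-mapping norm $\|G_k\|_2$. The point is that coupling $L$-smoothness with the projection's optimality condition lets $\phi(\pik)$ be expressed through $G_k$ and $\delta_k-\delta_{k+1}$ with no surviving appearance of $\nabla f$ itself. A secondary, purely technical nuisance is the bookkeeping for small $k$ (and degenerate regimes such as $\gamma$ near $0$ or $\dA=1$) where the quadratic recursion has not yet taken over; there the trivial bound $\delta_k\le1/(1-\gamma)$ already lies below the asserted rate because of the slack built into the constant $128$.
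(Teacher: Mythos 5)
Your proposal is correct, and it reaches the theorem by a genuinely different route from the paper. The paper's proof factors through Lemma~\ref{lem:var-to-dom}, which invokes \citet[Theorem~1]{Nesterov2013composite} to bound $\max_{\pi'\in\Pi}\langle\nabla \V_\mu(T_L(\pi)),T_L(\pi)-\pi'\rangle\le 2\sqrt{2\dS}\,\|G_L(\pi)\|_2$ and then applies variational gradient domination \emph{at the next iterate} $T_L(\pik)$; this yields the abstract condition $\|G_L(\pik)\|_2\ge\sqrt{2\omega}\,\delta_{k+1}$ of Definition~\ref{def:weak-grad-dom}, and hence the recursion $\delta_k-\delta_{k+1}\ge\frac{\omega}{L}\delta_{k+1}^2$, whose analysis in Theorem~\ref{thm:weak-grad-dom-rate} requires the counting argument over iterations where the ratio $\delta_{i+1}/\delta_i$ exceeds $r$ (producing the $\max$ of a sublinear and a geometric term). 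You instead control the variational quantity $\phi(\pik)$ at the \emph{current} iterate directly from the descent lemma plus the projection's optimality condition, arriving at $\delta_k\le\frac{\Cstar}{1-\gamma}\bigl(\Delta_k+D\sqrt{2L\Delta_k}\bigr)$ with $D=\sqrt{2\dS}$; I verified your core inequality and it is sound. After the per-iteration case split this gives $\Delta_k\gtrsim c\,\delta_k^2$ with $\delta_k$ (not $\delta_{k+1}$) squared, so $1/\delta_{k+1}-1/\delta_k\ge c$ telescopes immediately and the paper's counting argument is unnecessary; the geometric-drop branch is indeed absorbed, since $1/\delta_{k+1}-1/\delta_k\ge\frac{1}{2A\delta_k}\ge\frac{(1-\gamma)^2}{2\Cstar}$ there, and both increments are dominated by the stated constant. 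What each approach buys: yours is more elementary and self-contained (only the descent lemma and the projection variational inequality) and yields a slightly cleaner recursion with a somewhat better constant; the paper's isolates a reusable abstract condition (weak gradient-mapping domination) and a general convergence theorem for the proximal gradient method, which it advertises as a contribution of independent interest beyond the DMDP application.
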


The general case with $\rho\neq\mu$ can be handled with an additional distribution mismatch coefficient. Concretely, 
\begin{align*}
\V_\rho(\pik)-\V_\rho^\star 
&= \sum_{s\in\cS} \rho_s\left(\V_s(\pik)-\V_s^\star\right) \\
&= \sum_{s\in\cS} \frac{\rho_s}{\mu_s}\mu_s\left(\V_s(\pik)-\V_s^\star\right) \\
&\leq \left\|\frac{\rho}{\mu}\right\|_\infty\left(\V_\mu(\pik)-\V_\mu^\star\right).
\end{align*}
Then applying Theorem~\ref{thm:ppg-rate} with~$\rho$ replaced by~$\mu$ yields
\[
\V_\rho(\pik)-\V_\rho^\star \leq \frac{128\dS\dA}{k(1-\gamma)^5}
\left\|\frac{d_\mu(\pistar)}{\mu}\right\|_\infty^2
\left\|\frac{\rho}{\mu}\right\|_\infty .
\]
Comparing with~\eqref{eqn:ppg-slow-rate}, in addition to improving the rate from $O(1/\sqrt{k})$ to $O(1/k)$, our bound also has better dependence on the discount factor: $1/(1-\gamma)^5$ as opposed to $1/(1-\gamma)^6$.
However, our bound uses a different distribution mismatch coefficient and has an additional factor of $\|\rho/\mu\|_\infty$.

We prove Theorem~\ref{thm:ppg-rate} as a special case of a more general result on \emph{gradient-mapping domination}, which we present next.

\subsection{An Interlude on Gradient-Mapping Domination}
\label{sec:grad-map-dom}

In this section, we consider the following composite optimization problem 
\begin{equation}\label{eqn:composite-opt}
\minimize_{x\in\R^n}~ F(x):= f(x) + \Psi(x),
\end{equation}
where $f$ is smooth and $\Psi$ is convex and lower semi-continuous.
More specifically, we assume that there exists a constant~$L>0$ such that 
\begin{equation}\label{eqn:f-smooth}
\|\nabla f(x)-\nabla f(y)\|_2 \leq L\|x-y\|_2, \qquad \forall\,x,y\in\dom\Psi.
\end{equation}
The MDP formulation in~\eqref{eqn:min-v-rho} is a special case of~\eqref{eqn:composite-opt} with the mappings $x\gets\pi$, $f\gets \V_\rho$ and $\Psi$ as the indicator function of $\Pi$, i.e., $\Psi(\pi)=0$ if $\pi\in\Pi$ and~$\infty$ otherwise.

For any convex function $\phi$, the $\prox$ operator is defined as
\[
\prox_\phi(x) = \argmin_y \left\{\phi(y)+\frac{1}{2}\|y-x\|_2^2\right\}.
\]
A generic algorithm for solving problem~\eqref{eqn:composite-opt} is the 
\emph{proximal gradient method}:
\begin{align}
\xkp &= \argmin_x \left\{
\left\langle\nabla f(\xk), x-\xk\right\rangle + \frac{1}{2\eta_k}\|x-\xk\|^2 + \Psi(x) \right\} \nonumber \\
&= \prox_{\eta_k\Psi }\left(\xk-\eta_k\nabla f(\xk)\right),
\label{eqn:prox-grad-method}
\end{align}
where $\eta_k$ is the step size.
If $\Psi$ is the indicator function of~$\Pi$, then $\prox_{\eta_k\Psi}$ becomes the projection operator $\Proj_{\Pi}$ for any $\eta_k>0$.
%
In this section, we focus on the proximal gradient method with the constant step size $\eta_k=1/L$.
To simplify presentation, we define
\begin{equation}\label{eqn:T-def}
T_L(x)
:=\prox_{\frac{1}{L}\Psi}\left(x-\frac{1}{L}\nabla f(x)\right),
\end{equation}
thus the proximal gradient method~\eqref{eqn:prox-grad-method} can be written simply as $\xkp=T_L(\xk)$.
The \emph{gradient mapping} associated with problem~\eqref{eqn:composite-opt} is defined as
\begin{equation}\label{eqn:G-def}
G_L(x) := L\bigl(x-T_L(x)\bigr).
\end{equation}
In the special case $\Psi\equiv 0$, we have $G_L(x)=\nabla f(x)$ for any $L>0$.
The norm of the gradient mapping, $\|G_L(\xk)\|$, can serve as a measure of closeness to a first-order stationary point.

Key to the convergence analysis of the proximal gradient method is the following descent property \citep[Theorem~1]{Nesterov2013composite}, 
\begin{equation}\label{eqn:grad-map-descent}
F(\xk)-F(\xkp) 
\geq \frac{1}{2L}\bigl\|G_L(\xk)\bigr\|_2^2.
\end{equation}
Summing up over $k=0,1,\ldots,K$, we obtain
\[
F(\xzero) - F(x^{(K+1)})\geq\frac{1}{2L}\sum_{k=0}^{K} \left\|G_L(\xk)\right\|_2^2,
\]
which, together with the fact $F(x^{(K+1)})\geq F^\star$, implies
\citep[Theorem~10.15]{Beck2017book}
\begin{equation}\label{eqn:grad-map-rate}
\min_{0\leq k\leq K}\left\|G_L(\xk)\right\|_2 \leq \sqrt{\frac{2L\left(F(\xzero)-F^\star\right)}{K+1}}.
\end{equation}
The $O(1/\sqrt{k})$ convergence rate stated in~\eqref{eqn:ppg-slow-rate} is obtained by combining~\eqref{eqn:grad-map-rate} with~\eqref{eqn:var-grad-dom}, which is the approach taken by \citet{Agarwal2021jmlr} and \citet{BhandariRusso2019global}.

We show that under similar conditions, the proximal gradient method actually enjoy a faster $O(1/k)$ rate of convergence.
To this end, the following notion of (weak) gradient-mapping domination is a proper extension of (weak) gradient domination.

\begin{definition}[\textbf{weak gradient-mapping domination}]
\label{def:weak-grad-dom}
Suppose $F:=f+\Psi$ where $f$ is $L$-smooth and $\Psi$ is proper, convex and closed. We say that~$F$ satisfies a \emph{weak gradient-mapping dominance} condition if there exists $\omega>0$ such that 
\begin{equation}\label{eqn:weak-grad-map-dom}
\left\|G_L(x)\right\|_2 \geq \sqrt{2\omega} \left(F\bigl(T_L(x)\bigr)-F^\star\right),
\qquad \forall\,x\in\dom\Psi,
\end{equation}
where $F^\star=\min_x F(x)$ and $T_L$ and $G_L$ are defined in~\eqref{eqn:T-def} and~\eqref{eqn:G-def} respectively.
\end{definition}
This \emph{weak} version of gradient-mapping domination
corresponds to the Kurdyka-{\L}ojasiewicz (K{\L}) condition
with K{\L} exponent~$1$, instead of the usual exponent~$1/2$ that leads to linear convergence \citep{Kurdyka1998, Karimi2016PL, LiPong2018}.
We discuss the stronger notion of gradient-mapping dominance in Appendix~\ref{sec:apdx:grad-map-dom}.

In the following theorem, we prove the $O(1/k)$ convergence rate for problems satisfying weak gradient-mapping domination. 

\begin{theorem}\label{thm:weak-grad-dom-rate}
Consider the problem of minimizing $F:=f+\Psi$ where $f$ is $L$-smooth 
and $\Psi$ is proper, convex and closed. 
Suppose~$F$ is weakly gradient-mapping dominant with parameter~$\omega$
and let $F^\star=\min_x F(x)$.
Then the proximal gradient method~\eqref{eqn:prox-grad-method} with a constant step size $\eta_k=1/L$ generates a sequence $\{\xk\}$ that satisfies, for all $k\geq 0$,
\begin{equation}\label{eqn:weak-grad-dom-rate}
F(\xk) - F^\star \leq \max\biggl\{\frac{4L}{\omega k},\,\biggl(\frac{\sqrt{2}}{2}\biggr)^k\bigl(F(\xzero)-F^\star\bigr)\biggr\} .
\end{equation}
\end{theorem}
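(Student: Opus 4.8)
The plan is to combine the standard descent inequality \eqref{eqn:grad-map-descent} with the weak gradient-mapping domination assumption \eqref{eqn:weak-grad-map-dom} to set up a one-step recursion for the optimality gap $\delta_k := F(\xk) - F^\star$, and then analyze that recursion. Writing $\xkp = T_L(\xk)$, the descent property gives $\delta_k - \delta_{k+1} \geq \tfrac{1}{2L}\|G_L(\xk)\|_2^2$, while the domination bound (applied at $x=\xk$, noting $T_L(\xk)=\xkp$) gives $\|G_L(\xk)\|_2 \geq \sqrt{2\omega}\,\delta_{k+1}$. Chaining these yields the key recursion
\[
\delta_k - \delta_{k+1} \geq \frac{\omega}{L}\,\delta_{k+1}^2, \qquad k\geq 0,
\]
together with the crude contraction $\delta_{k+1} \leq \delta_k$ (descent) — and in fact I will also want the stronger $\delta_{k+1} \leq \tfrac12 \delta_k$ once $\delta_k$ is below a threshold, which I extract by the same two inequalities: when $\tfrac{\omega}{L}\delta_{k+1} \geq \tfrac12$ is false, i.e.\ $\delta_{k+1} < \tfrac{L}{2\omega}$... actually the clean split is: if $\delta_{k+1} \geq \tfrac{L}{\omega\,k}$-ish then the quadratic term dominates, otherwise geometric decay already kicked in.

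**The standard trick** for recursions of the form $\delta_k - \delta_{k+1} \geq c\,\delta_{k+1}^2$ is to divide through by $\delta_k \delta_{k+1}$: since $\delta_{k+1}\leq\delta_k$,
\[
\frac{1}{\delta_{k+1}} - \frac{1}{\delta_k} = \frac{\delta_k - \delta_{k+1}}{\delta_k \delta_{k+1}} \geq \frac{c\,\delta_{k+1}^2}{\delta_k\delta_{k+1}} = \frac{c\,\delta_{k+1}}{\delta_k} \;\geq\; c\cdot\frac{\delta_{k+1}}{\delta_k}\cdot\frac{\delta_{k+1}}{\delta_{k+1}}.
\]
The annoyance is the ratio $\delta_{k+1}/\delta_k$ on the right, which is not bounded below by a constant in general. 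The cleanest way around this is the dichotomy used in the theorem statement itself: at each step either $\delta_{k+1} \geq \tfrac12 \delta_k$ (then from the recursion $\tfrac12\delta_k \leq \delta_{k+1} \leq \delta_k$ and $\delta_k - \delta_{k+1} \geq \tfrac{c}{4}\delta_k^2$, so dividing by $\delta_k\delta_{k+1}\leq \delta_k^2$ gives $\tfrac{1}{\delta_{k+1}} - \tfrac{1}{\delta_k} \geq \tfrac{c}{4}$, a genuine arithmetic-progression bound on $1/\delta$), or $\delta_{k+1} < \tfrac12\delta_k$ (a geometric step). Let $K_1$ be the number of "geometric" steps among $0,\dots,k-1$ and $K_2 = k - K_1$ the "arithmetic" steps. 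After the geometric steps alone, $\delta_k \leq 2^{-K_1}\delta_0$; after the arithmetic steps, telescoping $\tfrac{1}{\delta_{k}} \geq \tfrac{1}{\delta_0} + \tfrac{c}{4}K_2 \geq \tfrac{c}{4}K_2$, so $\delta_k \leq \tfrac{4}{cK_2}$. Since $\max\{K_1,K_2\} \geq k/2$, one of $\delta_k \leq 2^{-k/2}\delta_0$ or $\delta_k \leq \tfrac{8}{ck}$ holds — but wait, I need to be careful: the two decay mechanisms act on disjoint blocks of steps, not on all $k$ steps, so I should track $\delta$ as it passes through blocks. The robust phrasing: $\delta_k \leq \min$ over the two pure-type subsequences is not quite right either; instead, since $\delta$ is monotone, $\delta_k \leq \delta_{\text{(last arithmetic step)}}$ and one bounds using whichever block is longer. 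With $c = \omega/L$ this produces $\tfrac{4L}{\omega k}$ (factor-of-2 slack absorbed) or $(\sqrt2/2)^k \delta_0$, matching \eqref{eqn:weak-grad-dom-rate}. **The main obstacle** is precisely making this block-counting argument airtight — ensuring the constant comes out to exactly $4L/(\omega k)$ and the geometric rate to exactly $(\sqrt2/2)^k$, and handling the interleaving of the two step types correctly (the honest approach is an induction on $k$ showing $\delta_k \leq \max\{4L/(\omega k), (\sqrt2/2)^k\delta_0\}$ directly: assume it at $k$, split on whether step $k$ is geometric or arithmetic, and in each case verify the bound propagates to $k+1$, using $\tfrac{1}{\delta_{k+1}} \geq \tfrac{1}{\delta_k} + \tfrac{\omega}{4L}$ in the arithmetic case and $\delta_{k+1}\leq\tfrac12\delta_k$ in the geometric case).

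**A cleaner alternative** I would try first: prove by induction on $k$ that $\delta_k \leq \max\{\tfrac{4L}{\omega k}, (\tfrac{\sqrt2}{2})^k \delta_0\}$. Base case $k=0$ is vacuous/trivial (interpret $4L/(\omega\cdot 0)$ as $+\infty$, or just note $\delta_0 \leq \delta_0$). Inductive step: given the bound at $k$, use $\delta_{k+1} \leq \delta_k$ and $\delta_k - \delta_{k+1}\geq \tfrac{\omega}{L}\delta_{k+1}^2$. Case (a): if $\delta_{k+1} \leq \tfrac12\delta_k$, then $\delta_{k+1} \leq \tfrac12\max\{\tfrac{4L}{\omega k},(\tfrac{\sqrt2}{2})^k\delta_0\}$; the geometric branch is immediate since $\tfrac12 \leq (\tfrac{\sqrt2}{2})$ would be false — so here I actually need $\delta_{k+1}\leq\tfrac12\delta_k \leq \tfrac12\cdot\tfrac{4L}{\omega k}$ in the first sub-branch, and I must check $\tfrac12\cdot\tfrac{4L}{\omega k} \leq \tfrac{4L}{\omega(k+1)}$, which is false for $k\geq 1$ — so this naive split fails and I'm forced back to the block-counting / two-subsequence bookkeeping, confirming that the interleaving is genuinely the crux. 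The resolution is to carry the induction hypothesis in the form "$\delta_k \leq 4L/(\omega k)$ OR $\delta_j \leq (\sqrt2/2)^{j}\delta_0$ for the relevant $j$", i.e.\ to define $k_0$ as the first index where a geometric step fails to occur and argue separately before and after $k_0$; I expect the write-up to spend most of its length on exactly this bookkeeping, after the three-line derivation of the recursion.
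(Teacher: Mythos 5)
Your recursion $\delta_k-\delta_{k+1}\geq\frac{\omega}{L}\delta_{k+1}^2$ and the dichotomy on whether $\delta_{i+1}/\delta_i\geq 1/2$ are exactly the paper's argument, but as written your proof has a quantitative gap and a structural one. The quantitative gap: in your ``arithmetic'' case you bound $\delta_k-\delta_{k+1}\geq\frac{c}{4}\delta_k^2$ and divide by $\delta_k^2$, so each such step increases $1/\delta$ by only $c/4$; with only $k/2$ such steps guaranteed this yields $\delta_k\leq 8L/(\omega k)$, not the claimed $4L/(\omega k)$ --- your ``factor-of-2 slack absorbed'' is not actually absorbed. The fix is to divide the recursion by $\delta_k\delta_{k+1}$ before discarding anything:
\[
\frac{1}{\delta_{k+1}}-\frac{1}{\delta_k}=\frac{\delta_k-\delta_{k+1}}{\delta_k\delta_{k+1}}\geq\frac{\omega}{L}\cdot\frac{\delta_{k+1}}{\delta_k}\geq\frac{\omega}{2L}
\]
on every step whose ratio is at least $1/2$; this is the paper's estimate and it recovers $4L/(\omega k)$.

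The structural gap is the ``interleaving'' bookkeeping you identify as the crux: the two resolutions you sketch (a per-step induction, which you correctly show fails because $\frac12\cdot\frac{4L}{\omega k}\not\leq\frac{4L}{\omega(k+1)}$, or splitting at a first index $k_0$, which does not exist since the two step types can alternate arbitrarily) are the wrong tools, and you leave the argument unfinished. The missing observation is that both decay mechanisms telescope globally and order-independently, so no block structure is needed. The increments $\frac{1}{\delta_{i+1}}-\frac{1}{\delta_i}$ are all nonnegative by monotone descent, so summing over $i=0,\dots,k-1$ and keeping only the good indices gives $\frac{1}{\delta_k}\geq\frac{\omega}{2L}\cdot\frac{k}{2}$ whenever at least $k/2$ ratios are $\geq 1/2$, regardless of where those indices sit. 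Dually, all ratios satisfy $\delta_{i+1}/\delta_i\leq 1$, so the telescoping product gives $\delta_k/\delta_0=\prod_{i=0}^{k-1}\delta_{i+1}/\delta_i\leq(1/2)^{k/2}=(\sqrt{2}/2)^k$ whenever at least $k/2$ ratios are $<1/2$, again regardless of position. Since one of the two counts is always at least $k/2$, the max bound follows immediately; this is precisely how the paper closes the argument (stated with general parameters $r,c\in(0,1)$ and then specialized to $r=c=1/2$).
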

\begin{proof}
Combining the descent property~\eqref{eqn:grad-map-descent} with the inequality~\eqref{eqn:weak-grad-map-dom} yields
\[
F(\xk) - F(\xkp) ~\geq~ \frac{1}{2L}\left\|G_L(\xk)\right\|_2^2
~\geq~ \frac{\omega}{L} \bigl(F(\xkp)-F^\star\bigr)^2.
\]
Let $\delta_k = F(\xk)-F^\star\geq 0$ 
Then we have
\[
\delta_k - \delta_{k+1} \geq\frac{\omega}{L}\delta_{k+1}^2 .
\]
We divide both sides of the above inequality by $\delta_k\delta_{k+1}$ to obtain
\[
\frac{1}{\delta_{k+1}}-\frac{1}{\delta_k}\geq \frac{\omega}{L}\frac{\delta_{k+1}}{\delta_k} .
\]
Then telescoping sum over iterations $0,1,\ldots,k-1$ yields
\[
\frac{1}{\delta_k}-\frac{1}{\delta_0} \geq \frac{\omega}{L}
\sum_{i=0}^{k-1}\frac{\delta_{i+1}}{\delta_i} .
\]
Notice that due to the descent property, we always have $\delta_{i+1}\leq\delta_i$ and thus $\delta_{i+1}/\delta_i\leq 1$.

For any two constant $r,c\in(0,1)$, let's define
$n(k,r)$ be the number of times that the ratio $\delta_{i+1}/\delta_i$ is at least~$r$ among the first~$k$ iterations.
If $n(k,r) \geq ck$, then $\delta_{i+1}/\delta_i\geq r$ at least $\lceil ck\rceil$ times, thus
\[
\frac{1}{\delta_{k}} - \frac{1}{\delta_0} \geq \frac{\omega}{L} r c k ,
\]
which implies
\[
\delta_k 
\leq \frac{1}{\frac{\omega}{L} r c k} 
= \frac{L}{\omega r c k} .
\]
Otherwise, we must have $n(k,r)< ck$, which means that $\delta_{i+1}/\delta_i<r$ at least $\lceil (1-c)k\rceil$ times. Noticing that $\delta_{i+1}/\delta_i\leq 1$ for all~$i$, we arrive at
\[
\delta_k\leq\delta_0 r^{(1-c)k} = \delta_0\left(r^{1-c}\right)^k .
\]
Combining the above two cases and using the fact that $r,c\in(0,1)$ can be chosen arbitrarily, we conclude that
\[
\delta_k \leq \min_{0<r,c<1} \max\left\{
\frac{L}{\omega r c k},
~\left(r^{1-c}\right)^k \right\} \delta_0 .
\]
Simply setting~$r=c=1/2$ gives the desired result~\eqref{eqn:weak-grad-dom-rate}. 
\end{proof}

\subsection{Proof of Theorem~\ref{thm:ppg-rate}}

In order to prove Theorem~\ref{thm:ppg-rate},
we only need to verify that the weak gradient-mapping domination 
holds for the weighted value function $\V_\rho$. This is the result of the next lemma.

\begin{lemma}\label{lem:var-to-dom}
Consider the problem of minimizing $\V_\rho$ over~$\Pi$ and suppose that $\V_\rho$ is $L$-smooth. 
We have
\begin{equation}\label{eqn:rho-mu-grad-dom}
\V_\rho\bigl(T_L(\pi)\bigr) - \V_\rho^\star \leq \frac{2\sqrt{2\dS}}{1-\gamma}\left\|\frac{d_\rho(\pi^\star)}{\mu}\right\|_\infty \left\|G_L(\pi)\right\|_2,
\end{equation}
where
\[
T_L(\pi) := \Proj_\Pi\left(\pi-\frac{1}{L}\nabla \V_\mu(\pi)\right),
\qquad
G_L(\pi) := L\bigl(\pi-T_L(\pi)\bigr).
\]
\end{lemma}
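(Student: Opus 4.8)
The plan is to invoke the variational gradient‑domination inequality \eqref{eqn:var-grad-dom} \emph{at the proximal image} $\pi^+ := T_L(\pi)$ rather than at $\pi$ itself, and then to show that the variational quantity $\max_{\pi'\in\Pi}\langle \nabla\V_\mu(\pi^+),\,\pi^+-\pi'\rangle$ is controlled by $\|G_L(\pi)\|_2$ times the Euclidean diameter of $\Pi$. The conceptual point — and really the only nonroutine step — is that one must evaluate the variational domination at $\pi^+$: the gradient‑mapping norm $\|G_L(\pi)\|_2$ measures the size of the projected step $\pi-\pi^+$, which is exactly what the first‑order optimality condition of the projection defining $\pi^+$ makes visible, whereas applying \eqref{eqn:var-grad-dom} directly at $\pi$ would produce $\|\nabla\V_\mu(\pi)\|_2$, a quantity that need not be small.

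First I would write the first‑order optimality condition for $\pi^+=\Proj_\Pi\bigl(\pi-\tfrac1L\nabla\V_\mu(\pi)\bigr)$: for every $\pi'\in\Pi$, $\bigl\langle \pi^+-\pi+\tfrac1L\nabla\V_\mu(\pi),\,\pi'-\pi^+\bigr\rangle\ge 0$, which after multiplying by $L$ and using $G_L(\pi)=L(\pi-\pi^+)$ rearranges to
\[
\bigl\langle \nabla\V_\mu(\pi),\,\pi^+-\pi'\bigr\rangle \leq \bigl\langle G_L(\pi),\,\pi^+-\pi'\bigr\rangle .
\]
Next I would transfer the gradient from $\pi$ to $\pi^+$ by the decomposition $\langle\nabla\V_\mu(\pi^+),\pi^+-\pi'\rangle=\langle\nabla\V_\mu(\pi^+)-\nabla\V_\mu(\pi),\pi^+-\pi'\rangle+\langle\nabla\V_\mu(\pi),\pi^+-\pi'\rangle$, bounding the first term with Cauchy--Schwarz and the $L$‑smoothness \eqref{eqn:val-func-smooth} (applied to $\V_\mu$) to get $L\|\pi^+-\pi\|_2\|\pi^+-\pi'\|_2=\|G_L(\pi)\|_2\|\pi^+-\pi'\|_2$, and the second term using the displayed optimality inequality followed by Cauchy--Schwarz. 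This gives $\langle\nabla\V_\mu(\pi^+),\pi^+-\pi'\rangle\le 2\|G_L(\pi)\|_2\|\pi^+-\pi'\|_2$.

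Finally I would bound $\|\pi^+-\pi'\|_2$ by the geometry of $\Pi=\Delta(\cA)^\dS$: any two probability vectors $p,q\in\Delta(\cA)$ satisfy $\|p-q\|_2^2\le\|p\|_2^2+\|q\|_2^2\le 2$ (here the sharper $\sqrt2$ bound, not the weaker $\|\cdot\|_1$ bound of $2$, is what produces the stated constant), so summing over the $\dS$ blocks yields $\|\pi^+-\pi'\|_2\le\sqrt{2\dS}$ uniformly in $\pi'\in\Pi$. Taking the maximum over $\pi'$ gives $\max_{\pi'\in\Pi}\langle\nabla\V_\mu(\pi^+),\pi^+-\pi'\rangle\le 2\sqrt{2\dS}\,\|G_L(\pi)\|_2$, and substituting this into \eqref{eqn:var-grad-dom} evaluated at $\pi^+=T_L(\pi)$ yields exactly \eqref{eqn:rho-mu-grad-dom}. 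The main obstacle is purely the idea of evaluating at $T_L(\pi)$ and the two‑line smoothness‑based transfer of $\nabla\V_\mu(\pi)$ to $\nabla\V_\mu(\pi^+)$; everything else is bookkeeping. One should also note the mild abuse in the lemma's hypothesis — $T_L$ and $G_L$ are built from $\nabla\V_\mu$, so it is the $L$‑smoothness of $\V_\mu$ that is used, which holds by \eqref{eqn:val-func-smooth} and coincides with that of $\V_\rho$ in the setting $\rho=\mu$ of Theorem~\ref{thm:ppg-rate}.
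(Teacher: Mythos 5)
Your proposal is correct and follows essentially the same route as the paper: apply the variational gradient-domination inequality \eqref{eqn:var-grad-dom} at $T_L(\pi)$ and bound the resulting variational term by $2\sqrt{2\dS}\,\|G_L(\pi)\|_2$. The only difference is that you derive the key inequality $\langle\nabla \V_\mu(T_L(\pi)),\,T_L(\pi)-\pi'\rangle\leq 2\|G_L(\pi)\|_2\,\|T_L(\pi)-\pi'\|_2$ from the projection optimality condition and smoothness, whereas the paper cites it directly from \citet[Theorem~1]{Nesterov2013composite}.
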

\begin{proof}
Applying a result of \citet[Theorem~1]{Nesterov2013composite} to our setting yields
\begin{align*}
\bigl\langle\nabla \V_\mu(T_L(\pi)),\, T_L(\pi)-\pi'\bigr\rangle
& \leq 2\|G_L(\pi)\|_2\cdot\|T_L(\pi)-\pi'\|_2 ,
\qquad\forall\,\pi,\pi'\in\Pi.
\end{align*}
Using the facts $T_L(\pi)\in\Pi$ and $\|\pi''-\pi'\|_2\leq\sqrt{2\dS}$ for any $\pi'',\pi'\in\Pi$, we obtain
\[
\max_{\pi'\in\Pi}\, \bigl\langle\nabla \V_\mu(T_L(\pi)), T_L(\pi)-\pi'\bigr\rangle \leq 2\sqrt{2\dS}\,\|G_L(\pi)\|_2.
\]
Combining the above bound with~\eqref{eqn:var-grad-dom} yields the desired result.
\end{proof}

An argument equivalent to Lemma~\ref{lem:var-to-dom} was used by \citet{Agarwal2021jmlr} which relies on a result of \citet{GhadimiLan2016}.
Combining~\eqref{eqn:rho-mu-grad-dom} with~\eqref{eqn:grad-map-rate} gives the result in~\eqref{eqn:ppg-slow-rate}.
On the other hand, we recognize that with $\rho=\mu$, inequality~\eqref{eqn:rho-mu-grad-dom} implies~\eqref{eqn:weak-grad-map-dom} with
\[
\omega = \frac{(1-\gamma)^2}{16\dS}\left\|\frac{d_\rho(\pistar)}{\rho}\right\|_\infty^{-2}, \qquad
L=\frac{2\gamma\dA}{(1-\gamma)^3} .
\]
Now we can apply Theorem~\ref{thm:weak-grad-dom-rate}.
Notice that in this case, the exponential decay part in~\eqref{eqn:weak-grad-dom-rate} is always smaller than the sublinear part, which leads to
$\V_\rho(\pik)-\V_\rho^\star\leq 4L/(\omega k)$, i.e.,
\[
\V_\rho(\pik)-\V_\rho^\star \leq \frac{128\dS\dA}{k(1-\gamma)^5}
\left\|\frac{d_\rho(\pistar)}{\rho}\right\|_\infty^2.
\]
This finishes the proof of Theorem~\ref{thm:ppg-rate}.

\citet[Theorem~5]{Junyu2020NeurIPS} have also established the $O(1/k)$ rate of the projected policy gradient method with direct parametrization. However, their proof appears to be quite different from ours, 
which leverages the dual linear programming parametrization 
\citep[Section~6.9]{Puterman1994book}.
In contrast, our approach is based on a novel notion of weak gradient-mapping domination and applies to general nonconvex composite optimization problems.

\section{Exact Policy Mirror Descent Methods}
\label{sec:pmd}

Mirror descent \citep{NemirovskiYudin1983book} is a general framework for the construction and analysis of optimization algorithms, which covers the projected gradient method as a special case. 
Here we adopt the form of mirror descent based on proximal minimization with respect to a Bregman divergence \citep{BeckTeboulle2003}.

Let $h:\Delta(\cA)\to\R$ be a strictly convex function and continuously differentiable on the relative interior of $\Delta(\cA)$, denoted as $\rint\Delta(\cA)$. 
The Bregman divergence generated by~$h$ is a distance-like function
defined as 
\[
D(p,p') := h(p) - h(p') - \langle \nabla h(p'), p-p'\rangle.
\]
Two most popular examples of Bregman divergence are:
\begin{itemize}
\item Squared Euclidean distance, generated by the squared $2$-norm: 
\[
h(p)=(1/2)\|p\|_2^2, \qquad D(p,p')=(1/2)\|p-p'\|_2^2.
\]
\item Kullback-Leibler (KL) divergence, generated by the negative entropy:
\[
h(p)=\sum_{a\in\cA}p_a\log p_a,
\qquad
D(p,p')=\sum_{a\in\cA}p_a\log\frac{p_a}{p'_a}.
\]
Notice that the gradient of negative entropy vanishes on the boundary of the simplex. Therefore we need to restrict the second argument $p'$ to lie within the relative interior of $\Delta(\cA)$.
We shall address such subtleties later in the convergence analysis.
\end{itemize}

Recall that the set of feasible policies is $\Pi=\Delta(\cA)^\dS$, which is a Cartesian product of $\dS$ copies of $\Delta(\cA)$.
For any $\rho\in\Delta(\cS)$, we define a weighted divergence function 
\[
D_{\rho}(\pi,\pi') := \E_{s\sim\rho}\bigl[D(\pi_s,\pi'_s)\bigr]
=\sum_{s\in\cS}\rho_s D(\pi_s,\pi'_s).
\]
This function satisfies the basic properties of a Bregman divergence; in particular, it is nonnegative and equals to~$0$ if and only if $\pi=\pi'$.

Following the derivations of \cite{Shani2020aaai}, we consider 
\emph{policy mirror descent} (PMD) methods with dynamically weighted divergences:
\[
\pikp = \argmin_{\pi\in\Pi} \left\{
\eta_k \bigl\langle \nabla \V_{\mu}(\pik),\,\pi\bigr\rangle
+ \frac{1}{1-\gamma}D_{d_{\mu}(\pik)}(\pi,\pik) \right\},
\]
where $\eta_k$ is the step size, $\mu\in\Delta(\cS)$ is an arbitrary state distribution and $d_\mu(\pik)$ is the discounted state-visitation distribution under the policy $\pik$.
Using the fact
\[
\bigl\langle\nabla \V_\mu(\pik),\,\pi\bigr\rangle=\sum_{s\in\cS}\bigl\langle\nabla_s \V_\mu(\pik),\,\pi_s\bigr\rangle,
\]
and plugging in the policy gradient formula~\eqref{eqn:policy-grad-s}, we obtain
\begin{align*}
\pikp &= \argmin_{\pi\in\Pi} \biggl\{
\frac{1}{1-\gamma}\sum_{s\in\cS}d_{\mu,s}(\pik) \Bigl(\eta_k\bigl\langle Q_s(\pik),\,\pi_s\bigr\rangle + D(\pi_s,\pik_s) \Bigr) \biggr\} \\
&= \argmin_{\pi\in\Pi} \biggl\{
\sum_{s\in\cS}\Bigl(\eta_k\bigl\langle Q_s(\pik),\,\pi_s\bigr\rangle + D(\pi_s,\pik_s) \Bigr) \biggr\},
\end{align*}
which can be written separately for each state as
\begin{equation}\label{eqn:pmd-update-s}
\pikp_s = \argmin_{p\in\Delta(\cA)} \Bigl\{
\eta_k\bigl\langle Q_s(\pik),\,p\bigr\rangle + D(p,\pik_s) \Bigr\}, \qquad \forall\,s\in\cS .
\end{equation}
Notice that the above update rule is independent of the choice of $\mu$.
This is the result of \emph{adaptive preconditioning} with a dynamically weighted divergence:
the weight for each state in $D_{d_\mu(\pik)}$
matches the coefficient of $Q_s(\pik)$ in the policy gradient $\nabla \V_\mu(\pik)$.

For the two prominent examples of Bregman divergence listed before, the corresponding PMD methods have closed-form update rules:
\begin{itemize}
\item \textbf{Projected $Q$-descent.}
If $D(\cdot,\cdot)$ is the squared Euclidean distance, then~\eqref{eqn:pmd-update-s} becomes
\begin{equation}\label{eqn:proj-Q-descent}
\pikp_s = \Proj_{\Delta(\cA)}\left(\pik_s-\eta_k Q_s(\pik)\right), 
\qquad \forall\,s\in\cS.
\end{equation}
Compared with the projected policy gradient method~\eqref{eqn:ppg-s}, we replaced the policy gradient $\nabla_s \V_\mu(\pik)$ by $Q_s(\pik)$ as the result of adaptive preconditioning.
\item \textbf{Exponentiated $Q$-descent.}
If $D(\cdot,\cdot)$ is the KL-divergence, then~\eqref{eqn:pmd-update-s} takes the form
\begin{equation}\label{eqn:exp-Q-descent}
\pikp_{s,a} = \pik_{s,a} \frac{\exp\left(-\eta_k Q_{s,a}(\pik)\right)}{\zk_s},  \qquad \forall\,a\in\cA, ~s\in\cS,
\end{equation}
where
\[
\zk_s = \sum_{a\in\cA}  \exp\left(-\eta_k Q_{s,a}(\pik)\right), \qquad\forall\, s\in\cS.
\]
This is exactly the Natural Policy Gradient (NPG) method 
\citep{Kakade2001NPG} expressed in the policy space \citep{Agarwal2021jmlr}.
\end{itemize}

In the rest of this section, we investigate the convergence rate of the PMD method~\eqref{eqn:pmd-update-s}. 
We show that with a constant step size, it has $O(1/k)$ convergence rate.
When the step size increases exponentially as $\eta_k=\eta_0/\gamma^k$, 
we have linear convergence and the convergnece rate depends on the distribution mismatch coefficient $\|d_\rho(\pistar)/\rho\|_\infty$.
In addition, we discuss situations of super-linear convergence
and connections to policy iteration.

Our results hold for PMD methods constructed with general Bregman divergences, matching or improving over the best known convergence rates.
In particular, the projected $Q$-descent method has the same rate of convergence as NPG. 
We show that the key ingredient for fast convergence of the PMD method is the adaptive preconditioning using weighted divergence functions.
The adopted local Bregman divergence, being KL-divergence or squared Euclidean distance, does not make much difference.

\subsection{Sublinear Convergence}
\label{sec:pmd-sublinear}

Our analysis is based on two key ingredients: the performance difference lemma (Lemma~\ref{lem:pdl}) and a three-point descent lemma on proximal optimization with Bregman divergences.

In order to cover both the squared Euclidean distance and KL-divergence without loss of rigor, we need some technical conditions. 
Specifically, we say a function~$h$ is of Legendre type \citep[Section~26]{Rockafellar1970book} if it is essentially smooth and strictly convex in the relative interior of $\dom h$, denoted as $\rint\dom h$. 
Essential smoothness means that~$h$ is differentiable and $\|\nabla h(x_k)\|\to\infty$ for every sequence $\{x_k\}$ converging to a boundary point of $\dom h$.
The following result is a slight variation of \citet[Lemma~3.2]{ChenTeboulle1993}, where we replaced the original assumption of $h$ being a \emph{Bregman function} with $h$ being of Legendre type.
The proof essentially follows the same arguments and thus is omitted here.

\begin{lemma}[Three-point descent lemma] 
\label{lem:3-point-descent}
Suppose that $\cC\subset\R^n$ is a closed convex set, $\phi:\cC\to\R$ is a proper, closed convex function, $D(\cdot,\cdot)$ is the Bregman divergence generated by a function~$h$ of Legendre type and $\rint\dom h\cap\cC\neq\emptyset$.
For any $x\in\rint\dom h$, let
\[
x^+=\argmin_{u\in\cC} \bigl\{\phi(u) + D(u,x)\bigr\}.
\]
Then $x^+\in\rint\dom h\cap\cC$ and for any $u\in\cC$,
\[
\phi(x^+)+D(x^+,x)\leq\phi(u)+D(u,x) - D(u,x^+).
\]
\end{lemma}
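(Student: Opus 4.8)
The plan is to follow the classical Bregman-proximal argument of \citet{ChenTeboulle1993} in four steps: (1)~show that the minimizer $x^+$ exists, is unique, and in fact lies in $\rint\dom h\cap\cC$; (2)~express the first-order optimality condition at $x^+$ as a subgradient inclusion; (3)~record the three-point identity for Bregman divergences; and (4)~combine steps~(2) and~(3) to read off the claimed inequality. Steps~(2)--(4) are short manipulations; the only substantial work---and the only place the Legendre-type hypothesis is used---is step~(1), which is precisely where the statement departs from the ``Bregman function'' assumption in the original \citet{ChenTeboulle1993}.

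Granting step~(1), a direct expansion of $D(p,p')=h(p)-h(p')-\langle\nabla h(p'),p-p'\rangle$ gives, for any $u$ and any $x,x^+\in\rint\dom h$, the three-point identity
\[
D(u,x) = D(u,x^+) + D(x^+,x) + \bigl\langle \nabla h(x^+)-\nabla h(x),\, u-x^+\bigr\rangle .
\]
Since $x^+\in\rint\dom h$, the function $u\mapsto D(u,x)$ is differentiable at $x^+$ with gradient $\nabla h(x^+)-\nabla h(x)$, so minimality of $x^+$ for $\min_{u\in\cC}\{\phi(u)+D(u,x)\}$ yields $-\bigl(\nabla h(x^+)-\nabla h(x)\bigr)\in\partial(\phi+\iota_\cC)(x^+)$, where $\iota_\cC$ denotes the indicator of $\cC$. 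Applying the subgradient inequality at an arbitrary $u\in\cC$ and rearranging gives $\bigl\langle\nabla h(x^+)-\nabla h(x),\,u-x^+\bigr\rangle\ge\phi(x^+)-\phi(u)$. Substituting this lower bound for the inner-product term in the three-point identity and rearranging produces exactly $\phi(x^+)+D(x^+,x)\le\phi(u)+D(u,x)-D(u,x^+)$, as claimed.

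The main obstacle is step~(1). Existence of a minimizer follows as in \citet{ChenTeboulle1993}---and is immediate in our application, where $\cC=\Delta(\cA)$ is compact and $\phi+D(\cdot,x)+\iota_\cC$ is lower semicontinuous and proper---while uniqueness is immediate from strict convexity of $h$ on $\rint\dom h$. The crucial fact is that $x^+$ cannot lie on the relative boundary of $\dom h$: since $h$ is of Legendre type it is essentially smooth, so $\partial h(u)=\emptyset$, hence $\partial D(\cdot,x)(u)=\partial h(u)-\nabla h(x)=\emptyset$, for every $u$ in the relative boundary of $\dom h$. On the other hand $x^+$ minimizes the sum $(\phi+\iota_\cC)+D(\cdot,x)$ of two proper closed convex functions, so under the constraint qualification implicit in the hypotheses---which holds in particular when $\phi$ is finite on $\cC$, as in our application---the subdifferential sum rule gives $0\in\partial(\phi+\iota_\cC)(x^+)+\partial D(\cdot,x)(x^+)$, forcing $\partial D(\cdot,x)(x^+)\neq\emptyset$ and therefore $x^+\in\rint\dom h\cap\cC$. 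This simultaneously establishes the first conclusion of the lemma and justifies the differentiation used in step~(2); with step~(1) in hand the remaining arguments coincide with those of \citet{ChenTeboulle1993}, which is why the present paper omits the details.
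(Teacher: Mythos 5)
Your proposal is correct and follows exactly the route the paper points to: it is the Chen--Teboulle three-point argument (optimality condition plus the Bregman three-point identity), with the Legendre-type/essential-smoothness hypothesis used precisely to rule out a minimizer on the relative boundary of $\dom h$ --- which is the one place the lemma departs from \citet[Lemma~3.2]{ChenTeboulle1993}, and is why the paper omits the details. The only soft spot is the appeal to a constraint qualification for the subdifferential sum rule, but as you note this is harmless in the intended application where $\cC=\Delta(\cA)$ is polyhedral and $\phi$ is linear.
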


In the context of the PMD method~\eqref{eqn:pmd-update-s},
$\cC=\Delta(\cA)$ and $\phi$ is the linear function $\eta_k\langle Q_s(\pik),\,\cdot\,\rangle$.
There are some subtle differences between the two Bregman divergences we consider, as explained below.
\begin{itemize}
\item 
For the squared Euclidean distance, $h(\cdot)=(1/2)\|\cdot\|_2^2$ is of Legendre type with $\rint\dom h=\R^\dA$ and thus $\rint\dom h\cap\cC=\Delta(\cA)$. Therefore each iterate generated by the PMD method, specifically~\eqref{eqn:proj-Q-descent}, can be on the boundary of $\Delta(\cA)$.
\item
For the KL divergence, $h$ is the negative entropy function, which is also of Legendre type, but with 
$\rint\dom h\cap\cC=\rint\dom h=\rint\Delta(\cA)$.
Therefore, if we start with an initial point in $\rint\Delta(\cA)$, then every iterates will stay in $\rint\Delta(\cA)$.
\end{itemize}

We first use Lemma~\ref{lem:3-point-descent} to prove a descent property of PMD.
This result is elementary and has appeared in various forms before \citep[e.g.,][]{LiuCaiYangWang2019NeurIPS,Lan2021pmd}.
We present the proof for completeness as we will need to refer to some intermediate steps in it later.

\begin{lemma}[Descent property of PMD]
\label{lem:Q-v-descent}
Suppose the initial point $\pizero\in\rint\Pi$. 
Then the sequences generated by the PMD method~\eqref{eqn:pmd-update-s} satisfy
\begin{equation}\label{eqn:Q-descent}
\bigl\langle Q_s(\pik),\pikp_s-\pik_s\bigr\rangle  \leq 0, \qquad \forall\,s\in\cS,
\end{equation}
and for any $\rho\in\Delta(s)$, 
\begin{equation}\label{eqn:v-monotone}
\V_\rho(\pikp) \leq \V_\rho(\pik), \qquad \forall\, k\geq 0. 
\end{equation}
\end{lemma}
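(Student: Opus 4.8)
The plan is to derive both assertions from two results already available: the three-point descent lemma (Lemma~\ref{lem:3-point-descent}), applied state by state, and the performance difference lemma (Lemma~\ref{lem:pdl}). A preliminary induction shows the iterates are well-defined and stay in $\rint\Pi$: assuming $\pik\in\rint\Pi$, each state-wise subproblem in~\eqref{eqn:pmd-update-s} is of the form minimized in Lemma~\ref{lem:3-point-descent}, with $\cC=\Delta(\cA)$, $\phi(\cdot)=\eta_k\langle Q_s(\pik),\cdot\rangle$ (linear, hence proper closed convex), $x=\pik_s\in\rint\dom h$, and $h$ of Legendre type; the lemma then produces a minimizer $\pikp_s\in\rint\dom h\cap\Delta(\cA)$, so $\pikp\in\rint\Pi$. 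Starting from $\pizero\in\rint\Pi$ this propagates for all $k\geq 0$, which also takes care of the boundary subtlety for the KL divergence discussed above.

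To prove~\eqref{eqn:Q-descent}, I would apply Lemma~\ref{lem:3-point-descent} with the data just described and the comparison point $u=\pik_s$. The three-point inequality reads
\[
\eta_k\bigl\langle Q_s(\pik),\,\pikp_s\bigr\rangle + D(\pikp_s,\pik_s) \;\leq\; \eta_k\bigl\langle Q_s(\pik),\,\pik_s\bigr\rangle + D(\pik_s,\pik_s) - D(\pik_s,\pikp_s).
\]
Using $D(\pik_s,\pik_s)=0$, $D(\pikp_s,\pik_s)\geq 0$ and $D(\pik_s,\pikp_s)\geq 0$, this collapses to $\eta_k\langle Q_s(\pik),\,\pikp_s-\pik_s\rangle\leq -D(\pik_s,\pikp_s)\leq 0$, and dividing by $\eta_k>0$ gives~\eqref{eqn:Q-descent} for every $s\in\cS$.

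For the monotonicity~\eqref{eqn:v-monotone}, I would invoke Lemma~\ref{lem:pdl} with $\pi=\pikp$ and $\tilde\pi=\pik$, obtaining for each $s\in\cS$
\[
\V_s(\pikp)-\V_s(\pik) = \frac{1}{1-\gamma}\,\E_{s'\sim d_s(\pikp)}\bigl\langle Q_{s'}(\pik),\,\pikp_{s'}-\pik_{s'}\bigr\rangle \;\leq\; 0,
\]
since each integrand is nonpositive by~\eqref{eqn:Q-descent} and $d_s(\pikp)\in\Delta(\cS)$. Multiplying by $\rho_s\geq 0$ and summing over $s\in\cS$ yields $\V_\rho(\pikp)=\rho^T\V(\pikp)\leq\rho^T\V(\pik)=\V_\rho(\pik)$ for every $\rho\in\Delta(\cS)$.

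There is essentially no hard analytic step here; the only thing demanding care is the Legendre-type bookkeeping — verifying that the iterates never reach the boundary of $\Delta(\cA)$ when $h$ is the negative entropy, so that $\nabla h(\pik_s)$ (implicit in the Bregman divergence) is well-defined and Lemma~\ref{lem:3-point-descent} is legitimately applicable at each iteration. The induction in the first paragraph resolves this, so I expect the write-up to be short.
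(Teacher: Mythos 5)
Your proposal is correct and follows essentially the same route as the paper: apply the three-point descent lemma state-wise with comparison point $\pik_s$ to get~\eqref{eqn:Q-descent}, then feed the sign information into the performance difference lemma to get~\eqref{eqn:v-monotone}. The only (harmless) quibble is in your induction: in the Euclidean case the lemma only guarantees $\pikp_s\in\rint\dom h\cap\Delta(\cA)=\Delta(\cA)$, not $\pikp\in\rint\Pi$, but the invariant actually needed to reapply Lemma~\ref{lem:3-point-descent} is $\pik_s\in\rint\dom h$, which your argument does establish.
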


\begin{proof}
Applying Lemma~\ref{lem:3-point-descent} to the update rule~\eqref{eqn:pmd-update-s} with $\cC=\Delta(\cA)$ and $\phi(\cdot)=\eta_k\langle Q_s(\pi^k),\,\cdot\,\rangle$, we obtain that for any $p\in\Delta(\cA)$,
\[
\eta_k\llangle Q_s(\pik),\pikp_s\rrangle + D(\pikp_s,\pik_s) 
\leq \eta_k\llangle Q_s(\pik),p\rrangle + D(p,\pik_s)- D(p,\pikp_s).
\]
Rearranging terms and dividing both sides by~$\eta_k$, we get
\begin{equation}\label{eqn:3-point-result}
\langle Q_s(\pik),\pikp_s-p\rangle + \frac{1}{\eta_k} D(\pikp_s,\pik_s) 
\leq \frac{1}{\eta_k} D(p,\pik_s) - \frac{1}{\eta_k} D(p,\pikp_s).
\end{equation}
Letting $p=\pik_s$ in~\eqref{eqn:3-point-result} yields
\[
\langle Q_s(\pik),\pikp_s-\pik_s\rangle  
\leq - \frac{1}{\eta_k} D(\pikp_s,\pik_s) - \frac{1}{\eta_k} D(\pik_s,\pikp_s),
\]
which implies~\eqref{eqn:Q-descent} since the Bregman divergence $D(\cdot,\cdot)$ is always nonnegative.
By the performance difference lemma, specifically the weighted version~\eqref{eqn:pdl}, we have
\[
\V_\rho(\pikp) - \V_\rho(\pik) = \frac{1}{1-\gamma}\E_{s\sim d_\rho(\pikp)}
\bigl\langle Q_s(\pik),\pikp_s-\pik_s\bigr\rangle \leq 0,
\]
which is the same as~\eqref{eqn:v-monotone}.
\end{proof}

The next result is a generalization of the $O(1/k)$ convergence rate of the NPG method obtained by \citet[Theorem~16]{Agarwal2021jmlr}, where they focused on the setting of KL-divergence and their proof also relies on specific properties of the KL-divergence. 
Here we extend it to more general Bregman divergence. 
\citet[Theorem~2]{Lan2021pmd} derived a similar result using techniques that works for general Bregman divergence. However, he worked with the special objective function $\V_{\rho^\star}$ where $\rho^\star$ is the stationary distribution of the optimal policy $\pistar$.
As a result, the proof of~\citet[Theorem~2]{Lan2021pmd} avoids some subtle arguments required for the more general objective function $\V_\rho$ where $\rho\in\Delta(\cS)$ can be arbitrary.

In order to simplify presentation, we use the following notation throughout this paper:
\begin{equation}\label{eqn:Dk-def}
\Dk:=D_{d_\rho(\pistar)}(\pistar,\pik) = \sum_{s\in\cS}d_{\rho,s}(\pistar) D(\pistar_s,\pik_s),
\end{equation}
where $d_\rho(\pistar)$ is the state-visitation distribution under $\pistar$ with initial state distribution~$\rho$.
Although $\rho$ does not appear in the notation $\Dk$, we hope it is clear from the context.

\begin{theorem}
\label{thm:pmd-sublinear}
Consider the policy mirror descent method~\eqref{eqn:pmd-update-s} with 
$\pizero\in\rint\Pi$ and 
constant step size $\eta_k=\eta$ for all $k\geq 0$.
For any $\rho\in\Delta(\cS)$, we have for all $k\geq 0$,
\[
\V_\rho(\pik)-\V_\rho^\star 
\leq \frac{1}{k+1}\left(\frac{\Dzero
}{\eta(1-\gamma)}+\frac{1}{(1-\gamma)^2}\right).
\]
\end{theorem}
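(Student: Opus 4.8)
The plan is to combine the three-point descent lemma (Lemma~\ref{lem:3-point-descent}) applied to the PMD update with the performance difference lemma (Lemma~\ref{lem:pdl}) and a monotonicity property of the $Q$-functions that is implicit in the descent property (Lemma~\ref{lem:Q-v-descent}). Throughout I would write $\Delta_k:=\V_\rho(\pik)-\V_\rho^\star\geq 0$. Note that for the KL divergence the quantities $\Dkp$ are finite precisely because $\pizero\in\rint\Pi$ forces every iterate to stay in $\rint\Pi$ (the bullet points after Lemma~\ref{lem:3-point-descent}), so all steps below make sense for both Bregman divergences considered.

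First I would derive a one-step recursion. Starting from inequality~\eqref{eqn:3-point-result} with $\eta_k=\eta$ and $p=\pistar_s$ (which is exactly Lemma~\ref{lem:3-point-descent} for the update~\eqref{eqn:pmd-update-s}), multiplying by $\eta\,d_{\rho,s}(\pistar)$ and summing over $s\in\cS$, one obtains, using the notation~\eqref{eqn:Dk-def},
\[
\eta\sum_{s\in\cS} d_{\rho,s}(\pistar)\bigl\langle Q_s(\pik),\pikp_s-\pistar_s\bigr\rangle
+\sum_{s\in\cS} d_{\rho,s}(\pistar)\,D(\pikp_s,\pik_s)\;\leq\;\Dk-\Dkp .
\]
Splitting $\pikp_s-\pistar_s=(\pikp_s-\pik_s)+(\pik_s-\pistar_s)$ and using the weighted performance difference lemma~\eqref{eqn:pdl} with $(\pi,\tilde\pi)=(\pistar,\pik)$ to replace $\sum_{s} d_{\rho,s}(\pistar)\langle Q_s(\pik),\pik_s-\pistar_s\rangle$ by $(1-\gamma)\Delta_k$, and dropping the nonnegative Bregman terms, I get
\[
\eta(1-\gamma)\Delta_k\;\leq\;\Dk-\Dkp-\eta\sum_{s\in\cS}d_{\rho,s}(\pistar)\bigl\langle Q_s(\pik),\pikp_s-\pik_s\bigr\rangle .
\]

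The crux is controlling the leftover inner-product term. By~\eqref{eqn:Q-descent} each $\langle Q_s(\pik),\pikp_s-\pik_s\rangle$ is nonpositive, so the quantity $-\eta\sum_s d_{\rho,s}(\pistar)\langle Q_s(\pik),\pikp_s-\pik_s\rangle$ is nonnegative and cannot just be discarded; bounding it crudely by a constant would destroy the $O(1/k)$ decay. The key observation is that it can be absorbed into a telescoping difference of value functions. Writing $-\langle Q_s(\pik),\pikp_s-\pik_s\rangle=\V_s(\pik)-\langle Q_s(\pik),\pikp_s\rangle$, the monotonicity~\eqref{eqn:v-monotone} applied with $\rho$ a point mass gives $\V_{s'}(\pikp)\leq\V_{s'}(\pik)$ for every $s'$, so the Bellman relation~\eqref{eqn:Q-v-relation} gives $Q_{s,a}(\pikp)\leq Q_{s,a}(\pik)$ for every $(s,a)$; hence $\langle Q_s(\pik),\pikp_s\rangle\geq\langle Q_s(\pikp),\pikp_s\rangle=\V_s(\pikp)$ and therefore $-\langle Q_s(\pik),\pikp_s-\pik_s\rangle\leq\V_s(\pik)-\V_s(\pikp)$. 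Summing against $d_\rho(\pistar)$ and combining with the previous display yields the clean one-step inequality
\[
\eta(1-\gamma)\Delta_k\;\leq\;\bigl(\Dk-\Dkp\bigr)+\eta\sum_{s\in\cS}d_{\rho,s}(\pistar)\bigl(\V_s(\pik)-\V_s(\pikp)\bigr).
\]

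Finally I would telescope this over $j=0,1,\dots,k$: the right-hand side collapses to at most $\Dzero+\eta\sum_{s\in\cS}d_{\rho,s}(\pistar)\V_s(\pizero)$ since $\Dkp\geq 0$ and $\V_s(\pikp)\geq 0$, and the value bound~\eqref{eqn:value-bound} gives $\sum_{s}d_{\rho,s}(\pistar)\V_s(\pizero)\leq 1/(1-\gamma)$. On the left, monotonicity~\eqref{eqn:v-monotone} gives $\Delta_j\geq\Delta_k$ for $j\leq k$, so the sum is at least $(k+1)\eta(1-\gamma)\Delta_k$. Dividing through by $(k+1)\eta(1-\gamma)$ yields exactly the claimed bound. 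I expect the cross-term step to be the only non-routine part — realizing that the leftover nonpositive inner-product term is what would otherwise prevent a decaying estimate, and that the right way to tame it is via the monotone improvement of the $Q$-functions, which converts it into a telescoping difference of the value function evaluated under $d_\rho(\pistar)$ rather than $\rho$, at the mild cost of the additive $1/(1-\gamma)^2$; the rest is bookkeeping with the three-point lemma and the performance difference lemma.
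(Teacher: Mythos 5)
Your proof is correct, and its architecture --- the three-point lemma with $p=\pistar_s$, splitting $\pikp_s-\pistar_s$ through $\pik_s$, the performance difference lemma for the $\pik_s-\pistar_s$ part, telescoping, and finally invoking monotonicity of $\V_\rho(\pik)$ --- is the same as the paper's. The one place you genuinely diverge is the step you yourself flag as the crux: bounding $-\sum_s d_{\rho,s}(\pistar)\langle Q_s(\pik),\pikp_s-\pik_s\rangle$. The paper handles it by inserting the lower bound $d_{d_\rho(\pistar),s}(\pikp)\geq(1-\gamma)d_{\rho,s}(\pistar)$ from~\eqref{eqn:dsv-lower-bound} (legitimate because each summand is nonpositive by~\eqref{eqn:Q-descent}) and then recognizing the reweighted sum, via the performance difference lemma with initial distribution $d_\rho(\pistar)$, as $(1-\gamma)\bigl(\V_{d_\rho(\pistar)}(\pikp)-\V_{d_\rho(\pistar)}(\pik)\bigr)$. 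You instead use state-wise monotonicity $\V_{s'}(\pikp)\leq\V_{s'}(\pik)$, push it through the Bellman relation~\eqref{eqn:Q-v-relation} to get $Q_{s,a}(\pikp)\leq Q_{s,a}(\pik)$ componentwise, and conclude $\langle Q_s(\pik),\pikp_s\rangle\geq\langle Q_s(\pikp),\pikp_s\rangle=\V_s(\pikp)$; this yields the pointwise bound $-\langle Q_s(\pik),\pikp_s-\pik_s\rangle\leq\V_s(\pik)-\V_s(\pikp)$, hence after weighting by $d_{\rho,s}(\pistar)$ exactly the same one-step inequality as the paper's, so the telescoping and the final constants coincide. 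Your route is arguably more elementary (it avoids the nested visitation distribution $d_{d_\rho(\pistar)}(\pikp)$ and a second application of the performance difference lemma), and the monotone-$Q$ observation is a nice structural fact in its own right. What the paper's mechanism buys is that it generalizes: in Proposition~\ref{prop:master-recursion} the same expectation is bounded by pulling out the exact ratio $\|d_\rho(\pistar)/d_\rho(\pikp)\|_\infty$ instead of the crude factor coming from $d_{s,s}(\pikp)\geq 1-\gamma$, producing a bound in terms of $\V_\rho(\pikp)-\V_\rho(\pik)$ rather than $\V_{d_\rho(\pistar)}(\pikp)-\V_{d_\rho(\pistar)}(\pik)$; that refinement is what closes the contraction for the linear-rate analysis, and your monotone-$Q$ argument would not deliver it.
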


\begin{proof}
Consider the inequality~\eqref{eqn:3-point-result}, we let $p=\pistar_s$ and subtract and add $\pik_s$ within the inner product term, which leads to
\[
\langle Q_s(\pik),\pikp_s-\pik_s\rangle
+ \langle Q_s(\pik),\pik_s-\pistar_s\rangle
\leq \frac{1}{\eta_k} D(\pistar_s,\pik_s) - \frac{1}{\eta_k} D(\pistar_s,\pikp_s).
\]
Notice that we dropped the nonnegative term $(1/\eta_k) D(\pikp_s,\pik_s)$ on the left side of the inequality.
Taking expectation with respect to the distribution $d_\rho(\pistar)$
on both sides of the above inequality 
and using the notation in~\eqref{eqn:Dk-def}, we obtain
\begin{equation}\label{eqn:two-expectations}
\E_{s\sim d_\rho(\pistar)}\bigl[\langle Q_s(\pik),\pikp_s-\pik_s\rangle\bigr] + \E_{s\sim d_\rho(\pistar)}\big[\langle Q_s(\pik),\pik_s-\pistar_s\rangle\bigr] 
~\leq~ \frac{1}{\eta_k} \Dk - \frac{1}{\eta_k} \Dkp.
\end{equation}
For the first expectation in~\eqref{eqn:two-expectations}, we have
\begin{align}
\E_{s\sim d_\rho(\pistar)}\bigl[\langle Q_s(\pik),\pikp_s-\pik_s\rangle\bigr]
&=\sum_{s\in\cS} d_{\rho,s}(\pistar)\bigl\langle Q_s(\pik),\pikp_s-\pik_s\bigr\rangle \nonumber \\
&\geq \frac{1}{1-\gamma}\sum_{s\in\cS} d_{d_\rho(\pistar),\,s}(\pikp)\bigl\langle Q_s(\pik),\pikp_s-\pik_s\bigr\rangle \nonumber \\
&= \V_{d_\rho(\pistar)}(\pikp) - \V_{d_\rho(\pistar)}(\pik) ,
\label{eqn:sublinear-first-bd}
\end{align}
where the inequality holds because of~\eqref{eqn:Q-descent} 
and the fact, due to~\eqref{eqn:dsv-lower-bound}, that
\[
d_{d_\rho(\pistar),s}(\pikp)\geq(1-\gamma)d_{\rho,s}(\pistar),
\qquad\forall\,s\in\Delta(\cS).
\] 
The last equality in~\eqref{eqn:sublinear-first-bd} is due to the performance difference lemma. 
For the second expectation in~\eqref{eqn:two-expectations}, we again use the performance difference lemma to obtain
\begin{equation}\label{eqn:2nd-expectation}
\E_{s\sim d_\rho(\pistar)}\big[\langle Q_s(\pik),\pik_s-\pistar_s\rangle\bigr] ~=~ (1-\gamma)\bigl(\V_\rho(\pik)-\V_\rho(\pistar)\bigr).
\end{equation}
Substituting the two results above into~\eqref{eqn:two-expectations} leads to
\[
(1-\gamma)\left(\V_\rho(\pik)-\V_\rho(\pistar)\right)
\leq 
\frac{1}{\eta_k} \Dk - \frac{1}{\eta_k} \Dkp
+ \V_{d_\rho(\pistar)}(\pik) - \V_{d_\rho(\pistar)}(\pikp).
\]
Setting $\eta_k=\eta$ for all $k\geq 0$ and summing up over $k$:
\begin{align*}
(1-\gamma)\sum_{i=0}^k\left(\V_\rho(\pii)-\V_\rho(\pistar)\right)
& \leq \frac{1}{\eta}\Dzero - \frac{1}{\eta} \Dkp + \V_{d_\rho(\pistar)}(\pizero) - \V_{d_\rho(\pistar)}(\pikp)\\
& \leq \frac{1}{\eta}\Dzero + \V_{d_\rho(\pistar)}(\pizero).
\end{align*}
Since $\V_\rho(\pik)$ is monotone non-increasing in~$k$
(see Lemma~\ref{lem:Q-v-descent}), we conclude that
\[
\V_\rho(\pik)-\V_\rho^\star 
\leq \frac{1}{k+1} \sum_{i=0}^k\left(\V_\rho(\pii)-\V_\rho(\pistar)\right)
\leq \frac{1}{k+1}\left(\frac{\Dzero}{\eta(1-\gamma)} + \frac{\V_{d_\rho(\pistar)}(\pizero)}{1-\gamma}\right),
\]
Finally, bounding $\V_{d_\rho(\pistar)}(\pizero)$ by $1/(1-\gamma)$ as in~\eqref{eqn:value-bound} gives the desired result.
\end{proof}

As a result of Theorem~\ref{thm:pmd-sublinear}, 
whenever $\eta\geq(1-\gamma) D_{d_\rho(\pistar)}(\pistar,\pizero)$, we have 
\begin{equation}\label{eqn:sublinear-rate}
\V_\rho(\pik)-\V_\rho^\star \leq \frac{2}{(k+1)(1-\gamma)^2},
\qquad \forall\,k\geq 0.
\end{equation}
In other words, the number of iterations to reach $\V_\rho(\pik)-\V_\rho^\star \leq \epsilon$ is at most
$$\frac{2}{(1-\gamma)^2\epsilon},$$
which is independent of the problem dimensions $\dS$ and $\dA$.
More specifically,
\begin{itemize}
\item 
For the projected $Q$-descent method~\eqref{eqn:proj-Q-descent},
since $D(\pi_s,\pi'_s)=(1/2)\|\pi_s-\pi'_s\|^2\leq 1$ 
for any $\pi_s,\pi'_s\in\Delta(\cA)$, we have 
$D_\rho(\pi,\pi')=\sum_{s\in\cS}\rho_s D(\pi_s,\pi'_s)\leq 1$ for any $\rho\in\Delta(\cS)$.
Therefore in order for~\eqref{eqn:sublinear-rate} to hold, it suffices to have $\eta\geq(1-\gamma)$. 
\item
For the exponentiated $Q$-descent method~\eqref{eqn:exp-Q-descent}, if we choose the uniform initial policy, i.e., $\pizero_{s,a}=1/\dA$ for all $(s,a)\in\cS\times\cA$, then $D_\rho(\pistar,\pizero)\leq\log\dA$ for all $\rho\in\Delta(\cS)$.
Therefore in order for~\eqref{eqn:sublinear-rate} to hold, it suffices to have $\eta\geq(1-\gamma)\log\dA$.
\end{itemize}
The above analysis indicates that the projected $Q$-descent method may have a slight advantage over the exponentiated variant (NPG) in terms of having a wider range of~$\eta$ to enjoy the same dimensional independent convergence guarantee~\eqref{eqn:sublinear-rate}.

A more curious fact is that for both variants, the step size~$\eta$ does not have an upper bound and can be as large as possible. 
This is in contrast to the classical analysis of smooth optimization, where the step size is usually upper bounded by $2/L$ with $L$ being the Lipschitz constant of the gradient; see, e.g., the approach taken in Section~\ref{sec:grad-map-dom}.
Here the fact the step sizes can be arbitrarily large is due to the unique structure of DMDP.
Indeed, we show next that PMD has linear convergence if the step size grows exponentially.

\subsection{Linear Convergence}
\label{sec:pmd-linear}

Consider again the policy mirror descent algorithm~\eqref{eqn:pmd-update-s}. 
In order to simplify the presentation, we define two more notations: the optimality gap
\begin{equation}\label{eqn:Delta-k-def}
\delta_k := \V_\rho(\pik) - \V_\rho(\pistar), 
\end{equation}
which is always nonnegative, and the per-iteration distribution mismatch coefficient
\begin{equation}\label{eqn:theta-k-def}
\vartheta_{k}:=\left\|\frac{d_\rho(\pistar)}{d_\rho(\pik)}\right\|_\infty.
\end{equation}

The following result is the basis for establishing the linear convergence
and also for discussions on possible superlinear convergence.

\begin{proposition}\label{prop:master-recursion}
Consider the policy mirror descent method~\eqref{eqn:pmd-update-s} with $\pizero\in\rint\Pi$ and $\eta_k>0$ for all $k\geq 0$.
Then for any $\rho\in\Delta(\cS)$, we have for all $k\geq 0$,
\begin{equation}\label{eqn:master-recursion}
\vartheta_{k+1}\bigl(\delta_{k+1} - \delta_k\bigr) + \delta_k \leq 
\frac{1}{(1-\gamma)\eta_k} \Dk - \frac{1}{(1-\gamma)\eta_k} \Dkp,
\end{equation}
where $\delta_k$, $\vartheta_k$ and $\Dk$ are defined in~\eqref{eqn:Delta-k-def}, \eqref{eqn:theta-k-def} and~\eqref{eqn:Dk-def}, respectively.
\end{proposition}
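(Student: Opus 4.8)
The plan is to start from the three-point descent inequality \eqref{eqn:3-point-result} with $p=\pistar_s$, and split the inner product term $\langle Q_s(\pik),\pikp_s-\pistar_s\rangle$ as $\langle Q_s(\pik),\pikp_s-\pik_s\rangle + \langle Q_s(\pik),\pik_s-\pistar_s\rangle$, exactly as in the proof of Theorem~\ref{thm:pmd-sublinear}. Dropping the nonnegative term $(1/\eta_k)D(\pikp_s,\pik_s)$ and then taking expectation with respect to $d_\rho(\pistar)$ over the states, we land at the analogue of \eqref{eqn:two-expectations}:
\[
\E_{s\sim d_\rho(\pistar)}\bigl[\langle Q_s(\pik),\pikp_s-\pik_s\rangle\bigr] + \E_{s\sim d_\rho(\pistar)}\bigl[\langle Q_s(\pik),\pik_s-\pistar_s\rangle\bigr] \leq \frac{1}{\eta_k}\Dk - \frac{1}{\eta_k}\Dkp.
\]
The second expectation is handled identically to \eqref{eqn:2nd-expectation} via the performance difference lemma: it equals $(1-\gamma)\delta_k$. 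So the whole game is to lower-bound the first expectation by $(1-\gamma)\vartheta_{k+1}(\delta_{k+1}-\delta_k)$, which when substituted and divided by $(1-\gamma)$ gives \eqref{eqn:master-recursion}.

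For the first expectation, I would again invoke the performance difference lemma, but now relating $\pikp$ and $\pik$ with the visitation distribution of $\pikp$: by \eqref{eqn:pdl} with $\pi=\pikp$ and $\tilde\pi=\pik$,
\[
\V_{d_\rho(\pistar)}(\pikp)-\V_{d_\rho(\pistar)}(\pik) = \frac{1}{1-\gamma}\sum_{s\in\cS} d_{d_\rho(\pistar),s}(\pikp)\,\bigl\langle Q_s(\pik),\pikp_s-\pik_s\bigr\rangle.
\]
The left-hand side is $\V_{d_\rho(\pistar)}(\pikp)-\V_{d_\rho(\pistar)}(\pik)$; I want to rewrite this in terms of $\delta_{k+1}-\delta_k$ using the distribution mismatch coefficient $\vartheta_{k+1}=\|d_\rho(\pistar)/d_\rho(\pikp)\|_\infty$. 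Concretely, $\V_{d_\rho(\pistar)}(\pikp)-\V_{d_\rho(\pistar)}(\pik) = \sum_s d_{\rho,s}(\pistar)\bigl(\V_s(\pikp)-\V_s(\pik)\bigr)$ and by Lemma~\ref{lem:Q-v-descent} each term $\V_s(\pikp)-\V_s(\pik)\leq 0$; writing $d_{\rho,s}(\pistar) = \frac{d_{\rho,s}(\pistar)}{d_{\rho,s}(\pikp)} d_{\rho,s}(\pikp) \leq \vartheta_{k+1} d_{\rho,s}(\pikp)$ and using the sign, this yields $\V_{d_\rho(\pistar)}(\pikp)-\V_{d_\rho(\pistar)}(\pik) \geq \vartheta_{k+1}\bigl(\V_{d_\rho(\pikp)}(\pikp)-\V_{d_\rho(\pikp)}(\pik)\bigr)$. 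Hmm — but I actually need a lower bound on the first expectation, so I must be careful about the direction. The cleaner route: start from the nonpositive quantity $\sum_s d_{\rho,s}(\pistar)\langle Q_s(\pik),\pikp_s-\pik_s\rangle$, lower bound it by replacing the weights $d_{\rho,s}(\pistar)$ with $\vartheta_{k+1} d_{d_\rho(\pistar),s}(\pikp)/(1-\gamma)\cdot$(something) — more precisely use $d_{\rho,s}(\pistar) \le \vartheta_{k+1}\, d_{\rho,s}(\pikp)$ together with the further inequality $d_{\rho,s}(\pikp)\le \frac{1}{1-\gamma}d_{d_\rho(\pistar),s}(\pikp)$... this is where I need to match up the visitation distributions carefully, which is the main obstacle.

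So the hard part will be getting the chain of weight comparisons exactly right to connect $\sum_s d_{\rho,s}(\pistar)\langle Q_s(\pik),\pikp_s-\pik_s\rangle$ (with all summands $\leq 0$) to $(1-\gamma)\vartheta_{k+1}(\delta_{k+1}-\delta_k)$. The key facts at my disposal are: all per-state terms $\langle Q_s(\pik),\pikp_s-\pik_s\rangle\le 0$ from \eqref{eqn:Q-descent}; the pointwise bound $d_{\rho,s}(\pistar)\le \vartheta_{k+1}\,d_{\rho,s}(\pikp)$ by definition of $\vartheta_{k+1}$; and the performance difference lemma \eqref{eqn:pdl} applied with base distribution $\rho$, namely $\V_\rho(\pikp)-\V_\rho(\pik) = \frac{1}{1-\gamma}\sum_s d_{\rho,s}(\pikp)\langle Q_s(\pik),\pikp_s-\pik_s\rangle = \delta_{k+1}-\delta_k$. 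Since every summand is nonpositive and $d_{\rho,s}(\pistar)\le\vartheta_{k+1}d_{\rho,s}(\pikp)$, we get $\sum_s d_{\rho,s}(\pistar)\langle Q_s(\pik),\pikp_s-\pik_s\rangle \ge \vartheta_{k+1}\sum_s d_{\rho,s}(\pikp)\langle Q_s(\pik),\pikp_s-\pik_s\rangle = (1-\gamma)\vartheta_{k+1}(\delta_{k+1}-\delta_k)$. That is exactly the bound I want. Plugging this and \eqref{eqn:2nd-expectation} back in, dividing through by $(1-\gamma)$, gives $\vartheta_{k+1}(\delta_{k+1}-\delta_k) + \delta_k \le \frac{1}{(1-\gamma)\eta_k}\Dk - \frac{1}{(1-\gamma)\eta_k}\Dkp$, which is \eqref{eqn:master-recursion}. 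I would finish by double-checking the $\rint\Pi$ hypothesis is only needed to ensure, via Lemma~\ref{lem:3-point-descent}, that all iterates stay in the relative interior so the updates and the three-point lemma are valid — no regularity issue otherwise.
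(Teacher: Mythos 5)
Your final argument is correct and is essentially identical to the paper's proof: both start from \eqref{eqn:two-expectations}, handle the second expectation via \eqref{eqn:2nd-expectation}, and lower-bound the first expectation by combining the per-state nonpositivity \eqref{eqn:Q-descent} with the pointwise ratio bound $d_{\rho,s}(\pistar)\le\vartheta_{k+1}\,d_{\rho,s}(\pikp)$ and then the performance difference lemma \eqref{eqn:pdl} with base distribution~$\rho$. The detour you briefly consider (weighting by $d_{d_\rho(\pistar)}(\pikp)$, as in Theorem~\ref{thm:pmd-sublinear}) is correctly abandoned; the route you settle on is exactly the one in the paper.
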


\begin{proof}
We start with the inequality~\eqref{eqn:two-expectations} and bound
the first expectation as follows:
\begin{align*}
\E_{s\sim d_\rho(\pistar)}\bigl[\langle Q_s(\pik),\pikp_s-\pik_s\rangle\bigr]
&=\sum_{s\in\cS} d_{\rho,s}(\pistar)\bigl\langle Q_s(\pik),\pikp_s-\pik_s\bigr\rangle \\
&=\sum_{s\in\cS} \frac{d_{\rho,s}(\pistar)}{d_{\rho,s}(\pikp)} d_{\rho,s}(\pikp)\bigl\langle Q_s(\pik),\pikp_s-\pik_s\bigr\rangle \\
&\geq \left\|\frac{d_\rho(\pistar)}{d_\rho(\pikp)}\right\|_\infty
\sum_{s\in\cS} d_{\rho,s}(\pikp)\bigl\langle Q_s(\pik),\pikp_s-\pik_s\bigr\rangle \\
&= \left\|\frac{d_\rho(\pistar)}{d_\rho(\pikp)}\right\|_\infty
(1-\gamma) \bigl(\V_\rho(\pikp)-\V_\rho(\pik)\bigr) ,
\end{align*}
where the inequality holds because of~\eqref{eqn:Q-descent}, and the last equality is due to the performance difference lemma, specifically~\eqref{eqn:pdl}.
Substituting the above bound and~\eqref{eqn:2nd-expectation} into~\eqref{eqn:two-expectations} and dividing both sides by $1-\gamma$ yield the desired result.
\end{proof}

The next theorem is our main result on linear convergence.
The convergence rate depends on the performance evaluation distribution~$\rho$ through the following quantity:
\begin{equation}\label{eqn:theta-rho-def}
\vartheta_{\rho} :=
\frac{1}{1-\gamma}\left\|\frac{d_\rho(\pistar)}{\rho}\right\|_\infty,
\end{equation}
which is an upper bound on $\vartheta_k$ for all $k\geq 0$.

\begin{theorem}
\label{thm:pmd-linear}
Consider the policy mirror descent method~\eqref{eqn:pmd-update-s} with $\pizero\in\rint\Pi$.
Suppose 
the step sizes satisfy $\eta_0>0$ and
\begin{equation}\label{eqn:increasing-eta-k}
\eta_{k+1} \geq \frac{\vartheta_\rho}{\vartheta_\rho - 1}\eta_k, \qquad k=0,1,2,\ldots,
\end{equation}
then we have for each $k\geq 0$,
\begin{equation}\label{eqn:general-linear-rate}
\V_\rho(\pik) - \V_\rho^\star 
\leq \left(1-\frac{1}{\vartheta_\rho}\right)^k 
\left(\V_\rho(\pizero)-\V_\rho^\star+\frac{\Dzero}{\eta_0\gamma}\right).
\end{equation}
\end{theorem}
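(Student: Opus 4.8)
The plan is to convert the master recursion of Proposition~\ref{prop:master-recursion} into a one-step contraction for a suitably weighted potential, and then unroll. Set $\beta:=1-1/\vartheta_\rho$ and $\lambda_k:=\bigl(\vartheta_\rho(1-\gamma)\eta_k\bigr)^{-1}$. Since $d_\rho(\pistar)$ and $\rho$ are both probability vectors on~$\cS$, we have $\|d_\rho(\pistar)/\rho\|_\infty\geq 1$, hence $\vartheta_\rho\geq\frac{1}{1-\gamma}>1$ and $\beta\in(0,1)$; moreover the hypothesis~\eqref{eqn:increasing-eta-k} reads exactly $\eta_{k+1}\geq\eta_k/\beta$, i.e.\ $\lambda_{k+1}\leq\beta\lambda_k$. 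In~\eqref{eqn:master-recursion} we have $\delta_{k+1}-\delta_k\leq 0$ by the monotonicity in Lemma~\ref{lem:Q-v-descent}, and $\vartheta_{k+1}\leq\vartheta_\rho$ by~\eqref{eqn:dsv-lower-bound} (equivalently~\eqref{eqn:dmc-ub}); therefore $\vartheta_{k+1}(\delta_{k+1}-\delta_k)\geq\vartheta_\rho(\delta_{k+1}-\delta_k)$, and rearranging the resulting inequality and dividing by $\vartheta_\rho$ yields
\[
\delta_{k+1}\leq\beta\,\delta_k+\lambda_k\Dk-\lambda_k\Dkp,\qquad k\geq 0.
\]

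The key step is a Lyapunov argument with the \emph{lagged} potential $\Phi_k:=\delta_k+\lambda_{k-1}\Dk$, using the convention $\lambda_{-1}:=\lambda_0/\beta$ so that $\Phi_0=\delta_0+\frac{\lambda_0}{\beta}\Dzero$. Adding $\lambda_k\Dkp\geq 0$ to both sides of the displayed inequality gives $\Phi_{k+1}\leq\beta\,\delta_k+\lambda_k\Dk$, and then $\lambda_k\leq\beta\lambda_{k-1}$ together with $\Dk\geq 0$ (finite, since Lemma~\ref{lem:3-point-descent} keeps the iterates in $\rint\dom h$) gives $\Phi_{k+1}\leq\beta(\delta_k+\lambda_{k-1}\Dk)=\beta\Phi_k$ for all $k\geq 0$; the case $k=0$ is covered because the convention makes $\lambda_0=\beta\lambda_{-1}$. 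Unrolling, $\delta_k\leq\Phi_k\leq\beta^k\Phi_0$.

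It then remains to identify the constant. We compute $\frac{\lambda_0}{\beta}=\bigl((\vartheta_\rho-1)(1-\gamma)\eta_0\bigr)^{-1}$, and the inequality $\vartheta_\rho\geq\frac{1}{1-\gamma}$ gives $(\vartheta_\rho-1)(1-\gamma)\geq\gamma$, so $\frac{\lambda_0}{\beta}\leq\frac{1}{\gamma\eta_0}$. Hence $\delta_k\leq\beta^k\bigl(\delta_0+\Dzero/(\gamma\eta_0)\bigr)$, which is precisely~\eqref{eqn:general-linear-rate} after substituting $\beta=1-1/\vartheta_\rho$, $\delta_0=\V_\rho(\pizero)-\V_\rho^\star$ and $\delta_k=\V_\rho(\pik)-\V_\rho^\star$.

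The main obstacle I anticipate is spotting the right potential: weighting $\Dk$ by the \emph{previous} step's reciprocal $\lambda_{k-1}$ (rather than $\lambda_k$) is what makes the $\Dkp$ produced at iteration~$k$ cancel the $\Dk$ that $\Phi_{k+1}$ inherits, and this cancellation is exactly what geometric growth $\eta_{k+1}\geq\eta_k/\beta$ buys — any slower growth would leave a non-telescoping residue. The rest is bookkeeping: the $k=0$ base case handled by the convention $\lambda_{-1}=\lambda_0/\beta$, and the one-line estimate $(\vartheta_\rho-1)(1-\gamma)\geq\gamma$ that produces the clean final constant $\Dzero/(\eta_0\gamma)$.
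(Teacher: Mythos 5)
Your proof is correct and follows essentially the same route as the paper: both start from Proposition~\ref{prop:master-recursion}, replace $\vartheta_{k+1}$ by $\vartheta_\rho$ using monotonicity of $\delta_k$, contract a potential of the form $\delta_k+c_k\Dk$ under the geometric step-size condition, and finish with $(1-\gamma)(\vartheta_\rho-1)\geq\gamma$. Your lagged weight $\lambda_{k-1}$ coincides (up to the direction in which the step-size inequality is applied) with the paper's coefficient $\bigl((1-\gamma)\eta_k(\vartheta_\rho-1)\bigr)^{-1}$, so the difference is purely bookkeeping.
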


\begin{proof}
Using~\eqref{eqn:dsv-lower-bound}, 
specifically $d_{\rho,s}(\pik)\geq(1-\gamma)\rho_s$ for all $s\in\cS$,
we have $\vartheta_k\leq\vartheta_\rho$ for all $k\geq 0$.
In addition, by Lemma~\ref{lem:Q-v-descent}, we have $\delta_{k+1}-\delta_k\leq 0$ for all $k\geq 0$.
Therefore~\eqref{eqn:master-recursion} still holds if we replace~$\vartheta_{k+1}$ by its upper bound $\vartheta_\rho$, i.e.,
\[
\vartheta_\rho\bigl(\delta_{k+1} - \delta_k\bigr) + \delta_k \leq 
\frac{1}{(1-\gamma)\eta_k} \Dk -\frac{1}{(1-\gamma)\eta_k} \Dkp.
\]
Dividing both sides by $\vartheta_\rho$ and rearranging terms, we obtain
\[
\delta_{k+1}
+\frac{1}{(1-\gamma)\eta_k\vartheta_\rho} \Dkp
\leq \left(1-\frac{1}{\vartheta_\rho}\right)
\left(\delta_k + \frac{1}{(1-\gamma)\eta_k(\vartheta_\rho-1)} \Dk\right).
\]
If the step sizes satisfy~\eqref{eqn:increasing-eta-k}, i.e., 
$\eta_{k+1}(\vartheta_\rho-1)\geq\eta_k\vartheta_\rho$, 
then we have
\begin{align*}
\delta_{k+1}
+\frac{1}{(1\!-\!\gamma)\eta_{k+1}(\vartheta_\rho\!-\!1)} \Dkp
& \leq \left(1-\frac{1}{\vartheta_\rho}\right)
\left(\delta_k + \frac{1}{(1\!-\!\gamma)\eta_k(\vartheta_\rho\!-\!1)} \Dk \right).
\end{align*}
This forms a recursion and results in
\begin{equation}\label{eqn:pre-linear-bd}
\delta_k
+\frac{1}{(1\!-\!\gamma)\eta_k(\vartheta_\rho\!-\!1)} \Dk
 \leq \left(1-\frac{1}{\vartheta_\rho}\right)^{k}
\left(\delta_0 + \frac{1}{(1\!-\!\gamma)\eta_0(\vartheta_\rho\!-\!1)} \Dzero \right) .
\end{equation}
Finally, using the fact $\vartheta_\rho\geq 1/(1-\gamma)$, we derive
\begin{equation}\label{eqn:theta-rho-gamma}
(1-\gamma)(\vartheta_\rho-1) 
\geq (1-\gamma)\left(\frac{1}{1-\gamma}-1\right) = \gamma,
\end{equation}
Substituting the above bound into the right side of~\eqref{eqn:pre-linear-bd}, and considering the nonnegativity of $\Dk$ on the left side, we arrive at the desired bound~\eqref{eqn:general-linear-rate}.
\end{proof}

The exact value of $\vartheta_\rho$ is hard to estimate in practice, which hinders the use of the step size rule~\eqref{eqn:increasing-eta-k}. 
However, we can replace it with the more aggressive increasing rule 
\[
\eta_{k+1} = \eta_k/\gamma, \qquad \forall\,k\geq 0,
\]
which always implies~\eqref{eqn:increasing-eta-k}. 
To see this, we use $\vartheta_\rho \geq 1/(1-\gamma)$ to derive
\[
\frac{\vartheta_\rho}{\vartheta_\rho-1} 
\leq\frac{1/(1-\gamma)}{1/(1-\gamma)-1} 
= \frac{1}{\gamma}.
\]

According to Theorem~\ref{thm:pmd-linear}, 
in order to guarantee $\V_\rho(\pik)-\V_\rho^\star\leq\epsilon$,
the required number of iterations of the PMD method is
\[
\frac{1}{1-\gamma}\left\|\frac{d_\rho(\pistar)}{\rho}\right\|_\infty
\log\left(\left(\V_\rho(\pizero)-\V_\rho^\star + \frac{1}{\eta_0\gamma}\Dzero\right)\frac{1}{\epsilon}\right).
\]
Using the bound~\eqref{eqn:value-bound} and assuming
$\eta_0\geq \frac{1-\gamma}{\gamma}\Dzero$,
the iteration complexity becomes
\[
\frac{1}{1-\gamma}\left\|\frac{d_\rho(\pistar)}{\rho}\right\|_\infty
\log\frac{2}{(1-\gamma)\epsilon} .
\]
Next we discuss a special choice of the performance evaluation distribution~$\rho$.

\paragraph{Special case of $\rho=\rho^\star$.} 
Let $\rho^\star\in\Delta(\cS)$ be the stationary state distribution of the MDP under the optimal policy $\pistar$.
If the MDP starts with $s\sim\rho^\star$ and following $\pi^\star$, then the visit probability at every step is $\rho^\star$ and so is the discounted sum of them. Therefore we have $d_{\rho^\star}(\pistar)=\rho^\star$,
which implies 
\[
\left\|\frac{d_{\rho^\star}(\pistar)}{\rho^\star}\right\|_\infty=1 \qquad \mbox{and} \qquad \vartheta_{\rho^\star}=\frac{1}{(1-\gamma)}.
\]
In this case, with the step size rule $\eta_0\geq \frac{1-\gamma}{\gamma}\Dzero$ and
$\eta_{k+1}\geq\eta_k/\gamma$, we have
\[
\V_{\rho^\star}(\pik) - \V_{\rho^\star}(\pistar)
\leq \gamma^k \frac{2}{1-\gamma}
\]
and the iteration complexity for  
$\V_{\rho^\star}(\pik) - \V_{\rho^\star}(\pistar)\leq\epsilon$ is
dimension-independent:
\begin{equation}\label{eqn:linear-star-complexity}
\frac{1}{1-\gamma}\log\frac{2}{(1-\gamma)\,\epsilon}.
\end{equation}
However, unless the MDP is ergodic, the support of $\rho^\star$ may not cover the full state space~$\cS$.

Several recent work studied policy mirror descent method for entropy-regularized MDP and obtained similar linear convergence rates
\citep{Cen2020NPGentropy,Lan2021pmd,Zhan2021Regularized}.
With entropy regularization, the resulting MDP is always ergodic and the support of any stationary distribution covers the full state space~$\cS$, i.e., $\rho^\star> 0$.
\citet{Lan2021pmd} only considers $\rho^\star$ as the performance evaluation distribution; \citet{Cen2020NPGentropy} and \citet{Zhan2021Regularized}
rely on the contraction properties of a generalized Bellman operator and obtain guarantees of the form $\|Q(\pik)-Q(\pistar)\|_\infty\leq\epsilon$ where $Q$ is the ``soft'' $Q$-function with regularization. 
Our analysis closely resembles that of \citet{Lan2021pmd}, with the following differences:
\begin{itemize}
\item We consider the standard DMDP and show that linear convergence can be obtained without entropy regularization.
Since the support of $\rho^\star$ may not cover the entire state space, we give a general analysis for any $\rho\in\Delta(\cS)$ and characterize the convergence rate in terms of the distribution mismatch coefficient 
$\|d_\rho(\pistar)/\rho\|_\infty$.
\item For DMDP without regularization, \citet{Lan2021pmd} also obtains a slower linear convergence rate ($\gamma^{k/2}$ instead of $\gamma^k$), through an approximate policy mirror descent (APMD) method. This method employs exponentially diminishing regularization and exponentially increasing step sizes, and the analysis is considerably more technical. 
\end{itemize}

\subsection{Superlinear Convergence}
\label{sec:pmd-superlinear}

Under additional conditions, the PMD method~\eqref{eqn:pmd-update-s} may exhibit superlinear convergence.
We revisit Proposition~\ref{prop:master-recursion} and start by rewriting the inequality~\eqref{eqn:master-recursion} as
\[
\delta_{k+1} 
+ \frac{\Dkp}{(1-\gamma)\vartheta_{k+1}\eta_k} 
\leq \left(1-\frac{1}{\vartheta_{k+1}} \right) \left(\delta_k
+\frac{\Dk}{(1-\gamma)(\vartheta_{k+1}-1)\eta_k} \right).
\]
If the step sizes satisfy $\eta_k\geq\frac{\vartheta_k}{\vartheta_{k+1}-1}\eta_{k-1}$ starting with some $\eta_{-1}>0$, then we have
\begin{align}
\delta_{k+1} + \frac{\Dkp}{(1-\gamma)\vartheta_{k+1}\eta_k} 
&\leq \left(1-\frac{1}{\vartheta_{k+1}} \right) \left(\delta_k
+\frac{\Dk}{(1-\gamma)\vartheta_k\eta_{k-1}}\right) \nonumber \\
&\leq \prod_{i=0}^k \left(1-\frac{1}{\vartheta_{i+1}} \right) \left(\delta_0
+\frac{\Dzero}{(1-\gamma)\vartheta_0\eta_{-1}} \right) .
\label{eqn:superlinear-recursion}
\end{align}
Therefore, we have superlinear convergence of $\delta_k$ if $\vartheta_k\to 1$.

Recall the definition of $\vartheta_k$ in~\eqref{eqn:theta-k-def}, 
we have $\vartheta_k\to 1$ if and only if $d_\rho(\pik)\to d_\rho(\pistar)$.
Apparently, a sufficient condition is $\pik\to\pistar$.
However, this is hard to establish without additional assumptions, e.g., by assuming that the optimal policy $\pistar$ is unique.
Alternatively, since
$\Dk=D_{d_\rho(\pi^\star)}(\pi^\star,\pi^k)\to 0$ implies
$\pi^k\to\pi^\star$, 
a reasonable attempt is to show the convergence of $\Dk$
by further leveraging~\eqref{eqn:superlinear-recursion}.
In particular, we can show
$$\frac{\Dk}{(1-\gamma)\vartheta_k\eta_{k-1}}\to 0$$
at the same speed as $\delta_k\to 0$, which is at least linear with an uniform upper bound on $\vartheta_k$ as we have done in Section~\ref{sec:pmd-linear}.
However, the step-size condition $\eta_k\geq\frac{\vartheta_k}{\vartheta_{k+1}-1}\eta_{k-1}$ implies that the factor $1/(\vartheta_k\eta_{k-1})$ itself converges at the same rate, thus we can not guarantee $\Dk\to 0$.

Nevertheless, we list here two sufficient conditions for superlinear convergence that are weaker than directly assuming $\pik\to\pistar$.
Both conditions have been used to establish superlinear convergence of the classical Policy Iteration algorithm
\citep[Corollary~6.4.10 and Theorem~6.4.8, respectively]{Puterman1994book}.
\begin{itemize} 
\item Convergence of the transition probability matrix $P(\pi^k)$. Specifically,
\[
\lim_{k\to\infty} \bigl\| P(\pi^k) - P(\pi^\star)\bigr\| = 0,
\]
where $\|\cdot\|$ is any matrix norm.
Under this condition, we have $d_\rho(\pik)\to d_\rho(\pistar)$ and thus $\vartheta_k\to 1$ because
$d_\rho(\pik) = \bigl(I - \gamma P(\pik)\bigr)^{-T}\!\rho$ is a continuous function.
\item There exists a finite constant $C>0$ such that for all $k=1,2,\ldots$
\[
\bigl\| P(\pi^k) - P(\pi^\star)\bigr\| \leq C \bigl( \V_\rho(\pi^k) - \V_\rho^\star \bigr) .
\]
This condition is stronger than the previous one because we already established linear convergence of $\V_\rho(\pi^k) - \V_\rho^\star $.
As a result, it leads to local quadratic convergence.
\end{itemize}
\citet{Khodadadian2021} showed that under a variant of the second condition above, the NPG method converges superlinerly. 
With entropy regularization, the optimal policy $\pistar$ is unique and \citet{Cen2020NPGentropy} established local quadratic convergence of the regularized PMD method.

\subsection{Connection with Policy Iteration}
\label{sec:pmd-connection}

Our analysis of the PMD method does not impose any upper bound on the step sizes: they can be either arbitrarily large constant (Section~\ref{sec:pmd-sublinear}) or gemmetrically increasing (Section~\ref{sec:pmd-linear}).
If we allow $\eta_k\to\infty$ for all iterations, the limit of the PMD method~\eqref{eqn:pmd-update-s} becomes
\[
\pikp_s = \argmin_{p\in\Delta(\cA)} 
~\bigl\langle Q_s(\pik),\,p\bigr\rangle, 
\qquad \forall\,s\in\cS ,
\]
which is precisely the classical Policy Iteration method
\citep[e.g.,][]{Puterman1994book,Bertsekas2012bookDPOC}.
In fact, our analysis still holds in the limiting case and the result corresponding to Theorem~\ref{thm:pmd-linear} is
\[
\delta_{k+1} \leq \left(1-\frac{1}{\vartheta_\rho}\right)^k \delta_0,
\]
where $\delta_k=\V_\rho(\pik)-\V_\rho^\star$.
Recall the definition of $\vartheta_\rho$ in~\eqref{eqn:theta-rho-def}. 
If $\rho=\rho^\star$, then we have $\vartheta_{\rho^\star}=1/(1-\gamma)$ and 
\[
\delta_{k+1} \leq \gamma^k \delta_0,
\]
which has the same convergence rate as Policy Iteration
\citep[e.g.,][]{Puterman1994book,Ye2011MDP}.
In general, we have the trivial bound 
\[
\vartheta_\rho = \frac{1}{1-\gamma}\left\|\frac{d_\rho(\pistar)}{\rho}\right\|_\infty \leq\frac{1}{(1-\gamma)\min_{s\in\cS}\rho_s},
\]
which leads to
\[
\delta_{k+1} \leq \left(1-(1-\gamma)\min_{s\in\cS}\rho_s\right)^k \delta_0.
\]
This convergence rate is the same as that established for several variants of policy gradient methods by \citet[Theorem~1]{BhandariRusso2021linear}, which requires exact line search. 
\citet{Khodadadian2021} show that the NPG method with an adaptive step size rule can also achieve linear convergence. 
In contrast, our results in Section~\ref{sec:pmd-linear} show that the simple, non-adaptive step size schedule of $\eta_k=\eta_0/\gamma^k$ is suffice to obtain linear convergence of a general class of policy mirror descent methods.


\section{Inexact Policy Mirror Descent Methods}
\label{sec:inexact-pmd}

For DMDP problems with large state and action spaces, computing the exact policy gradients or $Q$-functions are very costly and infeasible in practice. In this section, we consider the following inexact PMD method
\begin{equation}\label{eqn:inexact-pmd-s}
\pikp_s = \argmin_{p\in\Delta(\cA)}\left\{\eta_k\llangle \hQ_s(\pik),p\rrangle + D(p,\pik_s)\right\}, 
\qquad \forall\,s\in\cS.
\end{equation}
where $\hQ_s(\pik)$ is an inexact evaluation of $Q_s(\pik)$.
We first study the convergence properties of~\eqref{eqn:inexact-pmd-s} under the following assumption on the evaluation error.

\begin{assumption}\label{asmp:inexact-Q}
The inexact $Q$-function evaluations $\hQ(\pik)$ satisfy
\begin{equation}\label{eqn:inexact-Q-eval}
\bigl\| \hQ(\pik)-Q(\pik)\bigr\|_\infty \leq\tau,\qquad\forall\,k\geq 0.
\end{equation}
\end{assumption}

The following result is the counterpart of Lemma~\ref{lem:Q-v-descent} for the inexact PMD method.

\begin{lemma}\label{lem:inexact-Q-v-descent}
Consider the inexact PMD method~\eqref{eqn:inexact-pmd-s} with $\pizero\in\rint\Pi$ and suppose that Assumption~\ref{asmp:inexact-Q} holds. 
Then we have for all $k\geq 0$,
\begin{equation}\label{eqn:inexact-Q-descent}
\bigl\langle \hQ_s(\pik),\pikp_s-\pik_s\bigr\rangle \leq 0, \qquad \forall\,s\in\cS,
\end{equation}
and for any $\rho\in\Delta(\cS)$,
\begin{equation}\label{eqn:inexact-v-descent}
\V_\rho(\pikp) - \V_\rho(\pik) \leq \frac{2}{1-\gamma}\tau.
\end{equation}
\end{lemma}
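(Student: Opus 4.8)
The plan is to mirror the proof of Lemma~\ref{lem:Q-v-descent}, substituting $\hQ$ for $Q$ in the step that invokes the update rule and absorbing the discrepancy via Assumption~\ref{asmp:inexact-Q}. First I would apply the three-point descent lemma (Lemma~\ref{lem:3-point-descent}) to the update~\eqref{eqn:inexact-pmd-s} with $\cC=\Delta(\cA)$ and the linear function $\phi(\cdot)=\eta_k\langle\hQ_s(\pik),\cdot\rangle$; exactly as in the exact case, the hypothesis $\pizero\in\rint\Pi$ together with the Legendre-type property of~$h$ guarantees inductively that every iterate stays in $\rint\dom h\cap\Delta(\cA)$, so the lemma is applicable at each step. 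Taking the comparison point $u=\pik_s$ gives
\[
\bigl\langle\hQ_s(\pik),\pikp_s-\pik_s\bigr\rangle \leq -\frac{1}{\eta_k}\bigl(D(\pikp_s,\pik_s)+D(\pik_s,\pikp_s)\bigr) \leq 0,
\]
which is precisely~\eqref{eqn:inexact-Q-descent}.

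For the second claim, I would start from the weighted performance difference lemma~\eqref{eqn:pdl} applied to the pair $(\pikp,\pik)$:
\[
\V_\rho(\pikp)-\V_\rho(\pik) = \frac{1}{1-\gamma}\,\E_{s\sim d_\rho(\pikp)}\bigl\langle Q_s(\pik),\pikp_s-\pik_s\bigr\rangle .
\]
The key step is to split $\bigl\langle Q_s(\pik),\pikp_s-\pik_s\bigr\rangle = \bigl\langle\hQ_s(\pik),\pikp_s-\pik_s\bigr\rangle + \bigl\langle Q_s(\pik)-\hQ_s(\pik),\pikp_s-\pik_s\bigr\rangle$; the first term is nonpositive by~\eqref{eqn:inexact-Q-descent}, and the second is bounded in absolute value by $\|Q_s(\pik)-\hQ_s(\pik)\|_\infty\,\|\pikp_s-\pik_s\|_1 \leq 2\tau$ using H\"older's inequality, Assumption~\ref{asmp:inexact-Q}, and the fact that $\pikp_s,\pik_s\in\Delta(\cA)$ are at $\ell_1$-distance at most~$2$. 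Hence $\langle Q_s(\pik),\pikp_s-\pik_s\rangle\leq 2\tau$ for every $s\in\cS$, and taking expectation over $s\sim d_\rho(\pikp)$ and multiplying by $1/(1-\gamma)$ yields~\eqref{eqn:inexact-v-descent}.

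There is no serious obstacle here: the argument is a routine perturbation of the exact proof. The only point requiring a moment of care is verifying that the three-point descent lemma continues to apply along the entire trajectory in the KL case, where the effective domain is $\rint\Delta(\cA)$; this is handled by the same inductive observation already used for the exact PMD method, and everything else reduces to H\"older's inequality together with the $\ell_1$-diameter bound on the simplex.
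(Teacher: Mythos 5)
Your proposal is correct and follows essentially the same route as the paper: the three-point descent lemma applied with $\hQ_s(\pik)$ in place of $Q_s(\pik)$ gives~\eqref{eqn:inexact-Q-descent}, and the performance difference lemma combined with the decomposition $Q_s(\pik)=\hQ_s(\pik)+(Q_s(\pik)-\hQ_s(\pik))$, H\"older's inequality, and the $\ell_1$-diameter bound of~$2$ on the simplex yields~\eqref{eqn:inexact-v-descent}. No gaps.
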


\begin{proof}
The proof of~\eqref{eqn:inexact-Q-descent} follows the same arguments as in Lemma~\ref{lem:Q-v-descent}.
However, due to the inexact $Q$-function evaluations, 
the objectives $\V_\rho(\pik)$ are no longer monotone decreasing. 
We use the performance difference lemma to deduct:
\begin{align*}
\V_\rho(\pikp) - \V_\rho(\pik) 
&= \frac{1}{1-\gamma}\sum_{s\in\cS}d_{\rho,s}(\pikp)\llangle Q_s(\pik),\pikp_s-\pik_s\rrangle \\
&= \frac{1}{1-\gamma}\sum_{s\in\cS}d_{\rho,s}(\pikp)\llangle \hQ_s(\pik),\pikp_s-\pik_s\rrangle \\
&\quad + \frac{1}{1-\gamma}\sum_{s\in\cS}d_{\rho,s}(\pikp)\llangle Q_s(\pik)-\hQ_s(\pik),\pikp_s-\pik_s\rrangle .
\end{align*}
Notice that the first term on the right-hand side is non-positive due to~\eqref{eqn:inexact-Q-descent}. 
For the second term, we use H\"older's inequality to obtain, for all $s\in\cS$,
\begin{align}
\llangle Q_s(\pik)-\hQ_s(\pik),\pikp_s-\pik_s\rrangle 
& \leq \bigl\|Q_s(\pik)-\hQ_s(\pik)\bigr\|_\infty \bigl\|\pikp_s-\pik_s\bigr\|_1 \nonumber \\
&\leq 2 \bigl\|\hQ_s(\pik)-Q_s(\pik)\bigr\|_\infty \nonumber\\
&\leq 2 \tau,
\label{eqn:Q-holder}
\end{align}
where the second inequality is due to 
$\bigl\|\pikp_s-\pik_s\bigr\|_1\leq\bigl\|\pikp_s\bigr\|_1+\bigl\|\pik_s\bigr\|_1\leq 2$,
and the last inequality is due to Assumption~\ref{asmp:inexact-Q}. 
Combining~\eqref{eqn:Q-holder} with the previous inequality yields~\eqref{eqn:inexact-v-descent}.
\end{proof}

We will need the following simple fact, whose proof is straightforward and thus omitted.
\begin{lemma}\label{lem:simple-series}
Suppose $0<\alpha<1$, $b>0$, and a nonnegative sequence $\{a_k\}$ satisfies
\[
a_{k+1} \leq \alpha a_k + b, \qquad\forall\,k\geq 0.
\]
Then for all $k\geq 0$,
\[
a_k \leq \alpha^k a_0 + \frac{b}{1-\alpha}.
\]
\end{lemma}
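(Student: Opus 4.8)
\textbf{Proof proposal for Lemma~\ref{lem:simple-series}.}
The plan is to unroll the recursion $a_{k+1}\leq\alpha a_k + b$ and then bound the resulting finite geometric sum by its infinite counterpart. Concretely, I would first argue by induction on $k$ that $a_k \leq \alpha^k a_0 + b\sum_{i=0}^{k-1}\alpha^i$ for all $k\geq 1$: the base case $k=1$ is just the hypothesis with index $0$, and for the inductive step one substitutes $a_k \leq \alpha^k a_0 + b\sum_{i=0}^{k-1}\alpha^i$ into $a_{k+1}\leq\alpha a_k + b$ to get $a_{k+1}\leq\alpha^{k+1}a_0 + b\sum_{i=1}^{k}\alpha^i + b = \alpha^{k+1}a_0 + b\sum_{i=0}^{k}\alpha^i$, which is the claim at $k+1$.

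Then, since $0<\alpha<1$ and $b\geq 0$, the partial sum satisfies $\sum_{i=0}^{k-1}\alpha^i \leq \sum_{i=0}^{\infty}\alpha^i = \frac{1}{1-\alpha}$, so $a_k \leq \alpha^k a_0 + \frac{b}{1-\alpha}$ for all $k\geq 1$; the case $k=0$ is immediate because $a_0 \leq a_0 + \frac{b}{1-\alpha}$ (using $b>0$ and $1-\alpha>0$). Alternatively — and perhaps cleaner to write — one can prove the final bound directly by induction, checking that if $a_k \leq \alpha^k a_0 + \frac{b}{1-\alpha}$ then $a_{k+1}\leq \alpha a_k + b \leq \alpha^{k+1}a_0 + \frac{\alpha b}{1-\alpha} + b = \alpha^{k+1}a_0 + \frac{\alpha b + (1-\alpha)b}{1-\alpha} = \alpha^{k+1}a_0 + \frac{b}{1-\alpha}$, which closes the induction.

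There is no real obstacle here: the only thing to be slightly careful about is that the sequence is nonnegative (so that no sign issues arise when iterating the inequality) and that $\alpha\in(0,1)$ is used both to keep $\alpha^k$ decreasing and to sum the geometric series; both are given. I would therefore present the direct-induction version as the proof and keep it to two or three lines.
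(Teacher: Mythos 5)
Your proof is correct; the paper itself omits the argument as ``straightforward,'' and the direct induction you give (or equivalently unrolling the recursion and bounding the geometric sum by $\frac{1}{1-\alpha}$) is exactly the intended standard argument. Nothing is missing.
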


The following theorem characterizes the convergence of the inexact PMD method under Assumption~\ref{asmp:inexact-Q}.
We keep using the notations $\Dk$ and $\vartheta_\rho$ defined in~\eqref{eqn:Dk-def} and~\eqref{eqn:theta-rho-def}, respectively.

\begin{theorem}\label{thm:inexact-linear-rate}
Consider the inexact PMD method~\eqref{eqn:inexact-pmd-s} with $\pizero\in\rint\Pi$ and suppose that Assumption~\ref{asmp:inexact-Q} holds. 
If the step sizes satisfy $\eta_0\geq\frac{1-\gamma}{\gamma}\Dzero$ and $\eta_{k+1}\geq\eta_k/\gamma$, then we have for all $k\geq 0$,
\begin{equation}\label{eqn:inexact-linear-rate}
\V_\rho(\pik) - \V_\rho^\star 
\leq \biggl(1-\frac{1}{\vartheta_\rho}\biggr)^k \frac{2}{1-\gamma}
+ \frac{4\vartheta_\rho}{1-\gamma}\tau.
\end{equation}
\end{theorem}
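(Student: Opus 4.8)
The plan is to retrace the proof of Theorem~\ref{thm:pmd-linear}, carrying the $O(\tau)$ evaluation error through every step so as to obtain an inexact analogue of the master recursion of Proposition~\ref{prop:master-recursion} with one extra additive term, and then to close it with the step-size schedule and Lemma~\ref{lem:simple-series}. First I would apply the three-point descent lemma (Lemma~\ref{lem:3-point-descent}) to the inexact update~\eqref{eqn:inexact-pmd-s} with $p=\pistar_s$, drop the nonnegative term $\frac{1}{\eta_k}D(\pikp_s,\pik_s)$, split $\pikp_s-\pistar_s=(\pikp_s-\pik_s)+(\pik_s-\pistar_s)$, and take expectation over $d_\rho(\pistar)$, arriving (in the notation~\eqref{eqn:Dk-def}) at
\[
\E_{s\sim d_\rho(\pistar)}\bigl[\langle\hQ_s(\pik),\pikp_s-\pik_s\rangle\bigr] + \E_{s\sim d_\rho(\pistar)}\bigl[\langle\hQ_s(\pik),\pik_s-\pistar_s\rangle\bigr] \le \frac{1}{\eta_k}\Dk-\frac{1}{\eta_k}\Dkp .
\]

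Next I would bound the two expectations as in Proposition~\ref{prop:master-recursion}, but with $\hQ$ in place of $Q$ and with H\"older corrections. For the second expectation, H\"older's inequality together with $\|\pik_s-\pistar_s\|_1\le 2$ gives $\langle\hQ_s(\pik),\pik_s-\pistar_s\rangle\ge\langle Q_s(\pik),\pik_s-\pistar_s\rangle-2\tau$, and then~\eqref{eqn:2nd-expectation} turns the $Q$-part into $(1-\gamma)\delta_k$. For the first expectation I would use $\langle\hQ_s(\pik),\pikp_s-\pik_s\rangle\le 0$ (Lemma~\ref{lem:inexact-Q-v-descent}) to carry out the same $\vartheta$-reweighting as in Proposition~\ref{prop:master-recursion}, bounding $d_{\rho,s}(\pistar)/d_{\rho,s}(\pikp)$ by $\vartheta_\rho$ (i.e. $\vartheta_{k+1}\le\vartheta_\rho$, by~\eqref{eqn:dsv-lower-bound}), which gives $\E_{s\sim d_\rho(\pistar)}[\langle\hQ_s(\pik),\pikp_s-\pik_s\rangle]\ge\vartheta_\rho\sum_s d_{\rho,s}(\pikp)\langle\hQ_s(\pik),\pikp_s-\pik_s\rangle$; H\"older once more plus the performance difference lemma~\eqref{eqn:pdl} turns the right-hand side into $\vartheta_\rho\bigl((1-\gamma)(\delta_{k+1}-\delta_k)-2\tau\bigr)$. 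Collecting the error terms, dividing by $1-\gamma$, and using $\vartheta_\rho\ge\frac{1}{1-\gamma}\ge 1$ to absorb $2\vartheta_\rho\tau+2\tau\le 4\vartheta_\rho\tau$, I obtain
\[
\vartheta_\rho(\delta_{k+1}-\delta_k)+\delta_k \le \frac{1}{(1-\gamma)\eta_k}\Dk-\frac{1}{(1-\gamma)\eta_k}\Dkp+\frac{4\vartheta_\rho\tau}{1-\gamma},
\]
which is exactly~\eqref{eqn:master-recursion} with $\vartheta_{k+1}$ replaced by $\vartheta_\rho$ plus the error term $\frac{4\vartheta_\rho\tau}{1-\gamma}$.

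From here the argument mirrors the end of the proof of Theorem~\ref{thm:pmd-linear}: divide by $\vartheta_\rho$, use $\frac{1}{\vartheta_\rho}=\bigl(1-\frac{1}{\vartheta_\rho}\bigr)\frac{1}{\vartheta_\rho-1}$ to regroup, and invoke the step-size condition $\eta_{k+1}(\vartheta_\rho-1)\ge\eta_k\vartheta_\rho$ (implied by $\eta_{k+1}\ge\eta_k/\gamma$ since $\vartheta_\rho\ge\frac{1}{1-\gamma}$) to get the contraction $\Phi_{k+1}\le\bigl(1-\frac{1}{\vartheta_\rho}\bigr)\Phi_k+\frac{4\tau}{1-\gamma}$ for $\Phi_k:=\delta_k+\frac{1}{(1-\gamma)\eta_k(\vartheta_\rho-1)}\Dk$. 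Lemma~\ref{lem:simple-series} then yields $\Phi_k\le\bigl(1-\frac{1}{\vartheta_\rho}\bigr)^k\Phi_0+\frac{4\vartheta_\rho\tau}{1-\gamma}$, and since $\delta_k\le\Phi_k$ it remains only to bound $\Phi_0\le\frac{2}{1-\gamma}$, using $\delta_0\le\frac{1}{1-\gamma}$ from~\eqref{eqn:value-bound}, the hypothesis $\eta_0\ge\frac{1-\gamma}{\gamma}\Dzero$, and $(1-\gamma)(\vartheta_\rho-1)\ge\gamma$ as in~\eqref{eqn:theta-rho-gamma}; this gives~\eqref{eqn:inexact-linear-rate}.

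The main obstacle is that the inexact evaluations destroy the monotonicity $\V_\rho(\pikp)\le\V_\rho(\pik)$, so one cannot, as in the exact proof, first invoke $\delta_{k+1}-\delta_k\le 0$ and only then replace $\vartheta_{k+1}$ by $\vartheta_\rho$. The remedy is to perform the $\vartheta_\rho$-reweighting directly at the level of the inner products $\langle\hQ_s(\pik),\pikp_s-\pik_s\rangle$, which remain non-positive even in the inexact regime, and to pass from $\hQ$ to the true $Q$ only afterwards; the modest bookkeeping cost is that one of the two $\hQ\!\to\!Q$ substitutions sits inside the reweighted sum and therefore picks up the factor $\vartheta_\rho$, producing the $\frac{4\vartheta_\rho\tau}{1-\gamma}$ error floor in~\eqref{eqn:inexact-linear-rate}.
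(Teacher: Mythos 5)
Your proof is correct and follows essentially the same route as the paper's: the three-point descent lemma applied to the inexact update, the $\vartheta_\rho$-reweighting via the performance difference lemma with H\"older corrections of size $2\tau$, and Lemma~\ref{lem:simple-series} to close the recursion, yielding identical constants. The only (harmless) difference is where the bound $\vartheta_{k+1}\le\vartheta_\rho$ is invoked: you apply it directly to the non-positive reweighted sum of $\langle\hQ_s(\pik),\pikp_s-\pik_s\rangle$, needing only~\eqref{eqn:inexact-Q-descent}, whereas the paper first groups the $-\tfrac{2\tau}{1-\gamma}$ error with $\delta_{k+1}-\delta_k$ and uses the approximate descent property~\eqref{eqn:inexact-v-descent} before making the replacement.
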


\begin{proof}
Applying Lemma~\ref{lem:3-point-descent} to the update in~\eqref{eqn:inexact-pmd-s} and following the same arguments in the proof of Theorem~\ref{thm:pmd-sublinear}, we arrive at the following counterpart of~\eqref{eqn:two-expectations}:
\begin{equation}\label{eqn:inexact-two-expect}
\E_{s\sim d_\rho(\pistar)}\bigl[\langle \hQ_s(\pik),\pikp_s-\pik_s\rangle\bigr] + \E_{s\sim d_\rho(\pistar)}\big[\langle \hQ_s(\pik),\pik_s-\pistar_s\rangle\bigr] 
~\leq~ \frac{1}{\eta_k} \Dk - \frac{1}{\eta_k} \Dkp.
\end{equation}
For the first expectation in~\eqref{eqn:inexact-two-expect}, we follow the proof of Proposition~\ref{prop:master-recursion} to obtain
\begin{align*}
\E_{s\sim d_\rho^\star}\bigl[\langle \hQ_s(\pik),\pikp_s-\pik_s\rangle\bigr]
&\geq \left\|\frac{d_\rho(\pistar)}{d_\rho(\pikp)}\right\|_\infty
\sum_{s\in\cS} d_{\rho,s}(\pikp)\bigl\langle \hQ_s(\pik),\pikp_s-\pik_s\bigr\rangle \\
&=\vartheta_{k+1}\sum_{s\in\cS} d_{\rho,s}(\pikp)\bigl\langle Q_s(\pik),\pikp_s-\pik_s\bigr\rangle  \\
&\quad +\vartheta_{k+1}\sum_{s\in\cS} d_{\rho,s}(\pikp)\bigl\langle \hQ_s(\pik)-Q_s(\pik),\pikp_s-\pik_s\bigr\rangle \\
&\geq \vartheta_{k+1} (1-\gamma) \bigl(\V_\rho(\pikp)-\V_\rho(\pik)\bigr) 
-2\vartheta_{k+1}\tau,
\end{align*}
where the last inequality is due to the performance difference lemma and~\eqref{eqn:Q-holder}.
For the second expectation in~\eqref{eqn:inexact-two-expect}, we again use the performance difference lemma and H\"older's inequality to obtain
\begin{align*}
&\quad \E_{s\sim d_\rho(\pistar)}\big[\langle \hQ_s(\pik),\pik_s-\pistar_s\rangle\bigr]  \\
&=
\E_{s\sim d_\rho(\pistar)}\big[\langle Q_s(\pik),\pik_s-\pistar_s\rangle\bigr] +  
\E_{s\sim d_\rho(\pistar)}\big[\langle \hQ_s(\pik)-Q_s(\pik),\pik_s-\pistar_s\rangle\bigr] \\
&\geq (1-\gamma)\bigl(\V_\rho(\pik)-\V_\rho(\pistar)\bigr) 
-\E_{s\sim d_\rho(\pistar)}\big[\bigl\|\hQ_s(\pik)-Q_s(\pik)\bigr\|_\infty \bigl\|\pik_s-\pistar_s\bigr\|_1\bigr] \\
&\geq (1-\gamma)\bigl(\V_\rho(\pik)-\V_\rho(\pistar)\bigr) - 2\tau.
\end{align*}
Substituting the last two bounds into~\eqref{eqn:inexact-two-expect} and dividing both sides by $1-\gamma$, we get 
\[
\vartheta_{k+1}\left(\delta_{k+1}-\delta_k-\frac{2\tau}{1-\gamma}\right) + \delta_k
\leq \frac{1}{(1-\gamma)\eta_k}\Dk - \frac{1}{(1-\gamma)\eta_k}\Dkp
+\frac{2\tau}{1-\gamma},
\]
where $\delta_k := \V_\rho(\pikp)-\V_\rho^\star$.
Since $\delta_{k+1}-\delta_k-\frac{2\tau}{1-\gamma}\leq 0$ (Lemma~\ref{lem:inexact-Q-v-descent}) and $\vartheta_{k+1}\leq\vartheta_\rho$, the above inequality still holds with $\vartheta_{k+1}$ replaced by $\vartheta_\rho$, which leads to
\[
\vartheta_\rho\left(\delta_{k+1}-\delta_k\right) + \delta_k
\leq \frac{1}{(1-\gamma)\eta_k}\Dk - \frac{1}{(1-\gamma)\eta_k}\Dkp
+\frac{2(1+\vartheta_\rho)\tau}{1-\gamma}.
\]
Dividing both sides by $\vartheta_\rho$ and rearranging terms, we get
\[
\delta_{k+1} + \frac{1}{(1-\gamma)\eta_k\vartheta_\rho}\Dkp 
\leq \left(1-\frac{1}{\vartheta_\rho}\right)
\left(\delta_k + \frac{1}{(1-\gamma)\eta_k(\vartheta_\rho-1)}\Dk\right)
+\left(1+\frac{1}{\vartheta_\rho}\right)\frac{2\tau}{1-\gamma}.
\]
If the step sizes satisfy $\eta_{k+1}(\vartheta_\rho-1)\geq\eta_k\vartheta_\rho$, which is implied by $\eta_{k+1}\geq\eta_k/\gamma$, then
\[
\delta_{k+1} + \frac{1}{(1-\gamma)\eta_{k+1}(\vartheta_\rho-1)}\Dkp 
\leq \left(1-\frac{1}{\vartheta_\rho}\right)
\left(\delta_k + \frac{1}{(1-\gamma)\eta_k(\vartheta_\rho-1)}\Dk\right)
+\frac{4\tau}{1-\gamma},
\]
where we also used $1+1/\vartheta_\rho<2$ because $\vartheta_\rho>1$.
Next we invoke Lemma~\ref{lem:simple-series} with
\[
a_k = \delta_k + \frac{1}{(1-\gamma)\eta_k(\vartheta_\rho-1)}\Dk,
\qquad \alpha = 1-\frac{1}{\vartheta_\rho}
\qquad \mbox{and} \qquad b=\frac{4\tau}{1-\gamma},
\]
which leads to
\[
\delta_k \leq \left(1-\frac{1}{\vartheta_\rho}\right)^k \left(\delta_0+\frac{1}{(1-\gamma)\eta_0(\vartheta_\rho-1)}\Dzero\right) + \frac{4\vartheta_\rho}{1-\gamma}\tau.
\]
Finally applying~\eqref{eqn:theta-rho-gamma} and $\eta_0\geq\frac{1-\gamma}{\gamma}\Dzero$ gives the desired result~\eqref{eqn:inexact-linear-rate}.
\end{proof}

As a result of Theorem~\ref{thm:inexact-linear-rate}, we have the following asymptotic error bound:
\[
\lim_{k\to\infty} \V_\rho(\pik) - \V_\rho^\star 
~\leq~ \frac{4\vartheta_\rho}{1-\gamma}\tau
~=~ \frac{4\tau}{(1-\gamma)^2}\left\|\frac{d_\rho(\pistar)}{\rho}\right\|_\infty,
\]
which agrees with that of conservative policy iteration (CPI) of \citet[Theorem~6.2]{Kakade2002icml}. 
It is also similar to the asymptotic error bound of many approximate dynamical programming algorithms \citep[e.g.,][]{Bertsekas2012bookDPOC}, with the additional factor of distribution mismatch coefficient.

\subsection{Sample Complexity under a Generative Model}
\label{sec:high-prob}

One way to ensure Assumption~\ref{asmp:inexact-Q} hold with high probability is through multiple independent simulations (rollouts) of the MDP under a fixed policy.
In this section, we analyze the sample complexity of this approach.

Suppose that for a given policy $\pik$ and any state-action pair $(s,a)\in\cS\times\cA$, we can generate a set of $M_k$ independent, truncated trajectories of horizon $H$, i.e.,
\[
\cT_{s,a}^{(k,i)} = \left\{(s_0^{(i)},a_0^{(i)}),(s_1^{(i)},a_1^{(i)}),\ldots,(s_{H-1}^{(i)},a_{H-1}^{(i)})~\Big|~ s_0^{(i)}=s,~a_0^{(i)}=a
\right\},
\quad i=1,\ldots,M_k.
\]
We construct $\hQ_{s,a}(\pik)$ with the trajectories $\cT_{s,a}^{(k,i)}$,
$i=1,\ldots,M_k$, as follows:
\begin{equation}\label{eqn:Q-hat}
\hQ_{s,a}(\pik) := \frac{1}{M_k}\sum_{i=1}^{M_k}\hQ_{s,a}^{(i)}(\pik), \quad\mbox{where}\quad
\hQ_{s,a}^{(i)}(\pik):=\sum_{t=0}^{H-1}\gamma^t R\bigl(s_t^{(i)},\,a_t^{(i)}\bigr) .
\end{equation}

The following lemma gives a high-probability bound on the error $\|\hQ(\pik)-Q(\pik)\|_\infty$.

\begin{lemma}\label{lem:Q-high-prob-bd}
Consider the $Q$-estimator given in~\eqref{eqn:Q-hat}.
For any $\delta\in(0,1)$, if $M_k$ satisfies
\[
M_k \geq \frac{\gamma^{-2 H}}{2}\,\log\left(\frac{2\dS\dA}{\delta}\right),
\]
then we have with probability at least $1-\delta$,
\begin{equation}\label{eqn:Q-high-prob-bd}
\left\|\hQ(\pik)-Q(\pik)\right\|_\infty 
\leq \frac{2\gamma^{H}}{1-\gamma}.
\end{equation}
\end{lemma}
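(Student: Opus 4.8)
The plan is to split the estimation error into a deterministic \emph{bias} coming from truncating the trajectories at horizon~$H$ and a random \emph{fluctuation} term, bound the former directly, control the latter by Hoeffding's inequality applied to each state-action pair, and conclude with a union bound over the $\dS\dA$ pairs.

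First I would record the elementary range bound: since $R(s,a)\in[0,1]$, each single-trajectory estimate satisfies $0\le\hQ^{(i)}_{s,a}(\pik)=\sum_{t=0}^{H-1}\gamma^t R(s_t^{(i)},a_t^{(i)})\le\sum_{t=0}^{H-1}\gamma^t=\frac{1-\gamma^H}{1-\gamma}$, so it takes values in an interval of length at most $\frac{1-\gamma^H}{1-\gamma}\le\frac{1}{1-\gamma}$. For the bias, note that the trajectory $\cT^{(k,i)}_{s,a}$ is generated by following $\pik$ from $(s,a)$, so by the definition~\eqref{eqn:Q-def} of the $Q$-function, $\E\bigl[\hQ^{(i)}_{s,a}(\pik)\bigr]=Q_{s,a}(\pik)-\E\bigl[\sum_{t=H}^\infty\gamma^t R(s_t,a_t)\,\big|\,s_0=s,a_0=a\bigr]$, and the omitted tail lies in $\bigl[0,\sum_{t=H}^\infty\gamma^t\bigr]=\bigl[0,\frac{\gamma^H}{1-\gamma}\bigr]$. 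Hence $\bigl|\E\hQ^{(i)}_{s,a}(\pik)-Q_{s,a}(\pik)\bigr|\le\frac{\gamma^H}{1-\gamma}$, and the same bound holds for the empirical average $\hQ_{s,a}(\pik)$ since it is an average of identically distributed copies.

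Next, I would fix $(s,a)$ and apply Hoeffding's inequality to $\hQ_{s,a}(\pik)$, the average of the $M_k$ independent bounded variables $\hQ^{(i)}_{s,a}(\pik)$: with target deviation $\frac{\gamma^H}{1-\gamma}$ and range length $\frac{1-\gamma^H}{1-\gamma}$, this gives $\Prob\bigl(\bigl|\hQ_{s,a}(\pik)-\E\hQ_{s,a}(\pik)\bigr|\ge\frac{\gamma^H}{1-\gamma}\bigr)\le 2\exp\bigl(-\frac{2M_k\gamma^{2H}}{(1-\gamma^H)^2}\bigr)\le 2\exp(-2M_k\gamma^{2H})$, using $(1-\gamma^H)^2\le1$. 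A union bound over the $\dS\dA$ pairs (which needs only the marginal concentration for each pair, not joint independence across pairs) yields $\Prob\bigl(\bigl\|\hQ(\pik)-\E\hQ(\pik)\bigr\|_\infty\ge\frac{\gamma^H}{1-\gamma}\bigr)\le 2\dS\dA\exp(-2M_k\gamma^{2H})$, and the hypothesis $M_k\ge\frac{\gamma^{-2H}}{2}\log\frac{2\dS\dA}{\delta}$ forces the right-hand side to be at most~$\delta$. On the complementary event of probability at least $1-\delta$, the triangle inequality combines the fluctuation bound with the deterministic bias bound, $\bigl\|\hQ(\pik)-Q(\pik)\bigr\|_\infty\le\bigl\|\hQ(\pik)-\E\hQ(\pik)\bigr\|_\infty+\bigl\|\E\hQ(\pik)-Q(\pik)\bigr\|_\infty\le\frac{\gamma^H}{1-\gamma}+\frac{\gamma^H}{1-\gamma}=\frac{2\gamma^H}{1-\gamma}$, which is exactly~\eqref{eqn:Q-high-prob-bd}.

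The argument is routine and I do not anticipate a genuinely hard step; the only care needed is the bookkeeping of the Hoeffding constant — in particular using the interval length $\frac{1-\gamma^H}{1-\gamma}$ rather than a cruder bound, and allocating the error budget so that the truncation bias and the statistical fluctuation each account for exactly half of the target $\frac{2\gamma^H}{1-\gamma}$, which is what makes the stated sample size $M_k\ge\frac{\gamma^{-2H}}{2}\log\frac{2\dS\dA}{\delta}$ come out cleanly.
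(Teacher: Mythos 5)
Your proposal is correct and follows essentially the same route as the paper: the same bias--fluctuation decomposition with the truncation bias bounded by $\gamma^H/(1-\gamma)$, a per-pair Hoeffding bound, and a union bound over the $\dS\dA$ state--action pairs, arriving at the identical condition on $M_k$. The only cosmetic difference is that you use the interval length $(1-\gamma^H)/(1-\gamma)$ before relaxing it, while the paper uses $1/(1-\gamma)$ directly; both yield the same exponent $2M_k\gamma^{2H}$.
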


\begin{proof}
We first define the expectation of the $Q$-estimator in~\eqref{eqn:Q-hat}:
\[
\bQ_{s,a}(\pik) :=  \E\left[\hQ_{s,a}(\pik)\right] 
= \E\left[\hQ_{s,a}^{(i)}(\pik)\right] 
= \E\left[\sum_{t=0}^{H-1}\gamma^t R\bigl(s_t^{(i)},\,a_t^{(i)}\bigr)\right],
\]
which holds for any $i=1,\ldots,M_k$.
Recall the definition of $Q_{s,a}$ in~\eqref{eqn:Q-def}.
Since $R(s,a)\geq 0$, we always have
$Q_{s,a}(\pik) - \bQ_{s,a}(\pik)\geq 0$.
On the other hand,
\begin{align*}
Q_{s,a}(\pik) - \bQ_{s,a}(\pik) 
&=\E\left[\sum_{t=H}^{\infty}\gamma^t R\bigl(s_t^{(i)},\,a_t^{(i)}\bigr)\right]
\leq \E\left[\sum_{t=H}^{\infty}\gamma^t \right]
= \frac{\gamma^{H}}{1-\gamma},
\end{align*}
which holds for all $(s,a)\in\cS\times\cA$.
Therefore, 
\begin{equation}\label{eqn:hp-bias-bd}
\left\|\bQ(\pik)-Q(\pik)\right\|_\infty 
\leq \frac{\gamma^{H}}{1-\gamma}.
\end{equation}

Next that we can decompose the estimation error into two parts:
\begin{equation}\label{eqn:hp-error-decomp}
\left\|\hQ(\pik)-Q(\pik)\right\|_\infty 
\leq \left\|\hQ(\pik)-\bQ(\pik)\right\|_\infty 
+ \left\|\bQ(\pik)-Q(\pik)\right\|_\infty .
\end{equation}
The last term is bounded by~\eqref{eqn:hp-bias-bd}, so we need to bound  $\bigl\|\hQ(\pik)-\bQ(\pik)\bigr\|_\infty$.
To this end, we notice that the random variables $\hQ_{s,a}^{(i)}(\pik)$ are bounded in the interval $[0,1/(1-\gamma)]$.
Therefore Hoeffding's inequality \citep{Hoeffding1963} implies that 
for any $\sigma_k>0$,
\begin{align*}
\Prob\left(\left|\hQ_{s,a}(\pik)-\bQ_{s,a}(\pik)\right|\geq\sigma_k\right) 
& \leq 2\exp\left(-\frac{2M_k^2\sigma_k^2}{M_k/(1-\gamma)^2}\right)\\
& =2\exp\left(-2(1-\gamma)^2M_k\sigma_k^2\right).
\end{align*}
Applying the union bound across all $(s,a)\in\cS\times\cA$, we obtain
\begin{equation}\label{eqn:Q-union-bd}
\Prob\left(\left\|\hQ(\pik)-\bQ(\pik)\right\|_\infty\geq\sigma_k\right) 
\leq 2\dS\dA \exp\left(-2(1-\gamma)^2M_k\sigma_k^2\right).
\end{equation}
Therefore, for any $\delta\in(0,1)$, if we choose $M_k$ large enough, i.e.,
\begin{equation}\label{eqn:hp-Mk-bd}
M_k \geq \frac{1}{2(1-\gamma)^2\sigma_k^2}\,\log\left(\frac{2\dS\dA}{\delta}\right),
\end{equation}
then $\bigl\|\hQ(\pik)-\bQ(\pik)\bigr\|_\infty<\sigma_k$
with probability at least $1-\delta$.
Combining with~\eqref{eqn:hp-bias-bd} and~\eqref{eqn:hp-error-decomp},
we conclude that 
with probability at least $1-\delta$,
\[
\left\|\hQ(\pik)-Q(\pik)\right\|_\infty 
\leq \frac{\gamma^{H}}{1-\gamma} + \sigma_k.
\]
Finally setting $\sigma_k=\gamma^{H}/(1-\gamma)$ gives the desired result.
\end{proof}

The next theorem characterizes the sample complexity of the inexact PMD method with the simple $Q$-estimator.

\begin{theorem}\label{thm:sample-high-prob}
Consider using the $Q$-estimator~\eqref{eqn:Q-hat} in the inexact PMD method~\eqref{eqn:inexact-pmd-s}, with the step sizes satisfying 
$\eta_0\geq\frac{1-\gamma}{\gamma}\Dzero$ 
and $\eta_{k+1}\geq\frac{1}{\gamma}\eta_k$ for all $k\geq 0$.
For any $\delta\in(0,1)$ and integers $H>0$ and $K>0$, suppose the batch sizes $M_k$ satisfy
\begin{equation}\label{eqn:Mk-high-prob}
M_k \geq \frac{\gamma^{-2H}}{2}\,\log\left(\frac{2K\dS\dA}{\delta}\right),
\qquad k=0,1,\ldots,K-1.
\end{equation}
Then we have with probability at least $1-\delta$,
\begin{equation}\label{eqn:inexact-high-prob}
\V_\rho(\piK) - \V_\rho^\star 
\leq \biggl(1-\frac{1}{\vartheta_\rho}\biggr)^K \frac{2}{1-\gamma}
+ \frac{8\vartheta_\rho}{(1-\gamma)^2}\gamma^H.
\end{equation}
In addition, for any $\epsilon>0$, we have $\V_\rho(\piK) - \V_\rho^\star \leq\epsilon$ with probability at least $1-\delta$ if
\begin{equation}\label{eqn:K-H}
K\geq \vartheta_\rho\log\frac{4}{(1-\gamma)\epsilon}
\qquad\mbox{and}\qquad
H\geq \frac{1}{1-\gamma}\log\frac{16\vartheta_\rho}{(1-\gamma)^2\epsilon}.
\end{equation}
The corresponding sample complexity of state-action pairs is
\begin{equation}\label{eqn:hp-total-samples}
\widetilde{O}\left(\frac{\dS\dA}{(1-\gamma)^8\epsilon^2}\left\|\frac{d_\rho(\pistar)}{\rho}\right\|_\infty^3\right),
\end{equation}
where the notation $\widetilde{O}(\cdot)$ hides poly-logarithmic factors of $1/(1-\gamma)$, $1/\epsilon$ and $\dS\dA/\delta$. 
\end{theorem}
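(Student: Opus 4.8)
The plan is to stack two results already established in the paper: the high-probability $Q$-estimation bound of Lemma~\ref{lem:Q-high-prob-bd} and the inexact linear convergence bound of Theorem~\ref{thm:inexact-linear-rate}. The only genuinely new work is a union bound over iterations and the bookkeeping that picks $H$, $K$, and the batch sizes $M_k$ so as to hit accuracy $\epsilon$ and then turns this into a total sample count.

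\textbf{Step 1 (uniform good event).} Since $\V_\rho(\piK)$ depends only on the estimates $\hQ(\pizero),\dots,\hQ(\pi^{(K-1)})$, I would invoke Lemma~\ref{lem:Q-high-prob-bd} for each $k=0,1,\dots,K-1$ with failure probability $\delta/K$ in place of $\delta$; the batch-size hypothesis \eqref{eqn:Mk-high-prob} is exactly what that lemma requires to conclude $\|\hQ(\pik)-Q(\pik)\|_\infty\le 2\gamma^H/(1-\gamma)$ with probability at least $1-\delta/K$. A union bound over the $K$ iterations then produces a single event of probability at least $1-\delta$ on which Assumption~\ref{asmp:inexact-Q} holds (for $0\le k<K$, which is all that enters the recursion for $\V_\rho(\piK)$) with $\tau=2\gamma^H/(1-\gamma)$.

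\textbf{Step 2 (apply the inexact rate and pick $K$, $H$).} On this event the step-size and error hypotheses of Theorem~\ref{thm:inexact-linear-rate} are all met, so
\[
\V_\rho(\piK)-\V_\rho^\star \le \Bigl(1-\tfrac1{\vartheta_\rho}\Bigr)^{K}\frac{2}{1-\gamma} + \frac{4\vartheta_\rho}{1-\gamma}\cdot\frac{2\gamma^H}{1-\gamma},
\]
which is \eqref{eqn:inexact-high-prob}. To force $\V_\rho(\piK)-\V_\rho^\star\le\epsilon$ I would make each term at most $\epsilon/2$: for the first, $1-x\le e^{-x}$ gives $(1-1/\vartheta_\rho)^K\le e^{-K/\vartheta_\rho}$, so $K\ge\vartheta_\rho\log\frac{4}{(1-\gamma)\epsilon}$ suffices; for the second, $\frac{8\vartheta_\rho}{(1-\gamma)^2}\gamma^H\le\epsilon/2$ rearranges to $\gamma^H\le\frac{(1-\gamma)^2\epsilon}{16\vartheta_\rho}$, and taking logarithms with the elementary bound $\log(1/\gamma)\ge 1-\gamma$ shows $H\ge\frac1{1-\gamma}\log\frac{16\vartheta_\rho}{(1-\gamma)^2\epsilon}$ suffices. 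These are precisely the conditions in \eqref{eqn:K-H}.

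\textbf{Step 3 (sample count).} The number of sampled state-action pairs is $\sum_{k=0}^{K-1}\dS\dA\,M_k\,H$; taking each $M_k$ at the lower bound in \eqref{eqn:Mk-high-prob}, this is $O\!\bigl(\dS\dA\,K\,H\,\gamma^{-2H}\log\frac{K\dS\dA}{\delta}\bigr)$. The quantity to watch is $\gamma^{-2H}$: choosing $H$ as the \emph{smallest} integer meeting the condition from Step~2, minimality gives $\gamma^{H-1}>\frac{(1-\gamma)^2\epsilon}{16\vartheta_\rho}$, whence $\gamma^{-2H}<\gamma^{-2}\bigl(\frac{16\vartheta_\rho}{(1-\gamma)^2\epsilon}\bigr)^{2}$, i.e.\ only polynomially large. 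Plugging in $K=\widetilde O(\vartheta_\rho)$, $H=\widetilde O\bigl((1-\gamma)^{-1}\bigr)$, $\gamma^{-2H}=\widetilde O\bigl(\vartheta_\rho^{2}(1-\gamma)^{-4}\epsilon^{-2}\bigr)$, and $\vartheta_\rho=\frac1{1-\gamma}\|d_\rho(\pistar)/\rho\|_\infty$, the three accumulated powers of $\vartheta_\rho$ — one from $K$ and two from $\gamma^{-2H}$ inside $M_k$ — contribute $(1-\gamma)^{-3}\|d_\rho(\pistar)/\rho\|_\infty^{3}$, and absorbing the $\gamma^{-2}$ (a constant in the main regime $\gamma\to1$) and all logarithmic factors into $\widetilde O(\cdot)$, the product collapses to $\widetilde O\bigl(\frac{\dS\dA}{(1-\gamma)^8\epsilon^2}\|d_\rho(\pistar)/\rho\|_\infty^3\bigr)$, which is \eqref{eqn:hp-total-samples}. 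The only real obstacle is this last accounting step — controlling $\gamma^{-2H}$ through the minimality of $H$ and tracking how the powers of $\vartheta_\rho$ and of $(1-\gamma)$ combine; Steps~1 and~2 are immediate from the cited results.
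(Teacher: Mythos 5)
Your proposal is correct and follows essentially the same route as the paper: a union bound over the $K$ iterations of Lemma~\ref{lem:Q-high-prob-bd} with per-iteration failure probability $\delta/K$, then Theorem~\ref{thm:inexact-linear-rate} with $\tau=2\gamma^H/(1-\gamma)$, the same choices of $K$ and $H$ via $1-x\le e^{-x}$ and $\log(1/\gamma)\ge 1-\gamma$, and the same $\dS\dA\cdot K\cdot H\cdot M$ accounting. Your Step~3 is in fact slightly more careful than the paper's, which substitutes $\gamma^{-2H}\ge\bigl(16\vartheta_\rho/((1-\gamma)^2\epsilon)\bigr)^2$ into the product as if it were an equality, whereas you justify the matching upper bound on $\gamma^{-2H}$ via minimality of $H$.
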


\begin{proof}
Suppose the total number of iterations is~$K$. 
In order to have~\eqref{eqn:Q-high-prob-bd} hold for all $k=0,1,\ldots,K-1$, we need to apply the union bound across all~$K$ iterations, which imposes an additional factor~$K$ on the right-hand side of~\eqref{eqn:Q-union-bd}. Consequently, we can extend Lemma~\ref{lem:Q-high-prob-bd} to ensure that the event
\[
\left\|\hQ(\pik)-Q(\pik)\right\|_\infty 
\leq \frac{2\gamma^H}{1-\gamma}, 
\qquad k=0,1,\ldots,K-1,
\]
occurs with probability at least $1-\delta$ provided that~\eqref{eqn:Mk-high-prob} holds.
Then~\eqref{eqn:inexact-high-prob} follows directly from Theorem~\ref{thm:inexact-linear-rate} with $\tau=2\gamma^H/(1-\gamma)$.

In order to have $\V_\rho(\pik)-\V_\rho^\star\leq\epsilon$ within $K$ iterations, it suffices to have each of the two terms on the right-hand side of~\eqref{eqn:inexact-high-prob} less than $\epsilon/2$, i.e., 
\[
\biggl(1-\frac{1}{\vartheta_\rho}\biggr)^K \frac{2}{1-\gamma} \leq\frac{\epsilon}{2} \qquad\mbox{and}\qquad
\frac{8\vartheta_\rho}{(1-\gamma)^2}\gamma^H \leq \frac{\epsilon}{2}.
\]
which translate into the conditions on~$K$ and~$H$ in~\eqref{eqn:K-H}.
Correspondingly, the batch sizes need to satisfy $M_k\geq M$ where
\[
M:=~\frac{\gamma^{-2H}}{2}\,\log\left(\frac{2K\dS\dA}{\delta}\right)
~\geq~
\frac{1}{2}\left(\frac{16\vartheta_\rho}{(1-\gamma)^2\epsilon}\right)^2 \log\left(\frac{2K\dS\dA}{\delta}\right).
\]
The total number of state-action samples can be estimated as
\begin{align*}
&\dS\dA \cdot K \cdot H \cdot M  \\
=~&\dS\dA \cdot \vartheta_\rho\log\left(\frac{4}{(1-\gamma)\epsilon}\right)
\cdot\frac{1}{1-\gamma}\log\left(\frac{16\vartheta_\rho}{(1-\gamma)^2\epsilon}\right) \cdot\frac{1}{2}\left(\frac{16\vartheta_\rho}{(1-\gamma)^2\epsilon}\right)^2 \log\left(\frac{2K\dS\dA}{\delta}\right) \\
=~&\frac{128\dS\dA\vartheta_\rho^3}{(1-\gamma)^5\epsilon^2}\log\left(\frac{4}{(1-\gamma)\epsilon}\right)\log\left(\frac{16\vartheta_\rho}{(1-\gamma)^2\epsilon}\right) \log\left(\frac{2K\dS\dA}{\delta}\right) \\
=~&\widetilde{O}\left(\frac{\dS\dA\vartheta_\rho^3}{(1-\gamma)^5\epsilon^2}\right).
\end{align*}
Finally, plugging in the definition $\vartheta_\rho=\frac{1}{1-\gamma}\left\|\frac{d_\rho(\pistar)}{\rho}\right\|_\infty$ gives the estimate in~\eqref{eqn:hp-total-samples}.
\end{proof}
\vspace{-1ex}

The sample complexity obtained in Theorems~\ref{thm:sample-high-prob} has $O(\epsilon^{-2})$ dependence on~$\epsilon$.
This is better than that of $O(\epsilon^{-4})$ obtained by \citet{Shani2020aaai} and \citet{Agarwal2021jmlr} and $O(\epsilon^{-3})$ by \citet{LiuZhangBasarYin2020} for policy gradient type of methods (without regularization).
\citet{Cen2020NPGentropy} remarked that $O(\epsilon^{-2})$ sample complexity can be obtained with entropy regularization.
Their approach leads to a result without the factor of the distribution mismatch coefficient, but with the same $1/(1-\gamma)^8$ factor.
\citet{Lazaric2016} derived an $O(\epsilon^{-2})$ sample complexity for a variant of the policy iteration method, with a factor of at least $1/(1-\gamma)^7$.
\citet{Lan2021pmd} studies sample complexity
in expectation instead of with high probability and 
obtains similar results with weaker dependence on $1/(1-\gamma)$.
\citet{YuanGowerLazaric2021} characterize the sample complexity of vanilla policy gradient method (such as REINFORCE \citep{Williams1992}) under a variety of different assumptions on the parametrized value function.

Much progresses have been made for understanding the sample complexity of DMDP in the tabular setting. 
\citet{AzarMunosKappen2013} established a lower bound of 
$\widetilde\Omega\left(\frac{\dS\dA}{(1-\gamma)^3\epsilon^2}\right)$ 
for DMDP under a \emph{generative model}, which allows drawing random state-transitions repeatedly under any policy.
The simple Q-estimator we use in this section fits this sample oracle model, but the dependence of our results on $1/(1-\gamma)$ is much worse than the lower bound. 
On the other hand, this lower bound has been matched or nearly matched by several recent work based on variance-reduced Value Iteration \citep{Sidford2018nearoptimal} and $Q$-learning \citep{Wainwright2019Q}. 
There are interesting work to be done for improving the sample complexity of stochastic policy gradient methods.

\section{Conclusion and Discussion}
\label{sec:conclusion}

We developed a general theory of weak gradient-mapping dominance and used it to obtain an improved sublinear convergence rate of the projected policy gradient methods. 
By exploiting additional structure of discounted Markov decision problem (DMDP), we show that with a simple, non-adaptive rule of geometrically increasing the step sizes, policy mirror descent methods enjoy linear convergence without relying on entropy or other strongly convex regularizations. 
%
%
In fact, the convergence rates obtained with strongly convex regularizations \citep{Cen2020NPGentropy,Lan2021pmd,Zhan2021Regularized} are no better than $\gamma^k$ regardless of the regularization strength.

Our results on policy mirror descent methods show that dynamic preconditioning using discounted state-visitation distributions
is critical for obtaining fast convergence rates that are (almost) independent of problem dimensions.
The adopted local Bregman divergence, being KL-divergence or squared Euclidean distance, does not make much difference.
Indeed, when the step sizes grow to infinity, preconditioned policy mirror descent methods derived with different Bregman divergences all reduce to the classical Policy Iteration algorithm.
Essentially, such methods with finite step sizes can be viewed as inexact Policy Iteration methods, much like many approximate dynamic programming algorithms.

The major limitation of this work is our restriction to direct policy parametrization.
(We note that the NPG method with tabular softmax parametrization has an equivalent mirror-descent form expressed in the policy space, therefore is included in our study.)
A natural extension is to consider general policy parametrizations of the form $\pi(\theta)$ where the dimension of~$\theta$ is much smaller than $\dS\dA$.
There are two ways to proceed.
The first approach is to simply treat it as a nonlinear optimization problem of minimizing the composite objective $J_\rho(\theta)=\V_\rho(\pi(\theta))$.
This approach may lose some important structure of DMDP.
In particular, the parametrized objective function $J_\rho(\theta)$ may no longer be quasi-convex or quasi-concave. As a result, it will be hard to establish convergence to global optimum and we may have to rely on standard theory of smooth nonconvex optimization, which imposes bounded step sizes and leads to relatively slow convergence rates.

The second approach is to follow the framework of \emph{compatible function approximation} \citep{Sutton2000PolicyGrad,Kakade2001NPG}, which is extensively developed by \citet{Agarwal2021jmlr}.
This approach facilitates the extension of our results on inexact policy mirror descent
to general policy parametrization.
In particular, our results in Section~\ref{sec:inexact-pmd} show that geometrically increasing step sizes do not cause instability even if the $Q$-functions are evaluated inaccurately. 
In fact, inexact policy mirror descent methods converge linearly up to an asymptotic error floor,
which immediately leads to an $O(\epsilon^{-2})$ sample complexity as we have shown. 
It is of great interest to reduce the dependence of sample complexity on $1/(1-\gamma)$ and the distribution mismatch coefficient.

\acks{%
The author is grateful to Lihong Li and Simon S.\ Du for helpful discussions and feedback. 
Parts of the results in this paper were obtained by the author while preparing for a tutorial jointly with Lihong Li at the SIAM Conference on Optimization held in July 2021.

The author is indebted to Marek Petrik and Julien Grand-Clement, who found a mistake in a previous version of this paper stating that the weighted value function is quasi-convex and quasi-concave. They gave a simple counter-example and pointed out the mistake in the proof. 
Indeed, it is neither quasi-convex nor quasi-concave.
Fortunately this mistake does not affect the rest of the results that are contained in this version.
}

\appendix
\section{Appendix}

\subsection{Derivation of Policy Gradient using Matrix Calculus}
\label{sec:apdx:policy-grad}

We derive the policy gradient formula~\eqref{eqn:policy-grad-s} using simple matrix calculus.
Let $e_s\in\R^\dS$ be a vector  with components $e_{s,s'}=1$ if $s=s'$ and $0$ otherwise.
From the expression of $\V(\pi)$ in~\eqref{eqn:v-def-P-r}, we can write its components as
\begin{align*}
\V_s(\pi) =e_s^T \V(\pi) = e_s^T \bigl(I-\gamma P(\pi)\bigr)^{-1}r(\pi).
\end{align*}
Using the matrix calculus formula 
$\frac{\partial X^{-1}}{\partial \pi} = -X^{-1}\frac{\partial X}{\partial \pi} X^{-1}$
with $X=(I-\gamma P(\pi))$, we have
\begin{align*}
\frac{\partial \V_s(\pi)}{\partial \pi_{s',a'}}
&=e_s^T\left(-\bigl(I\!-\!\gamma P(\pi)\bigr)^{-1} \left(-\gamma\frac{\partial P(\pi)}{\pi_{s',a'}}\right) \bigl(I\!-\!\gamma P(\pi)\bigr)^{-1}\right) r(\pi) + e_s^T\bigl(I\!-\!\gamma P(\pi)\bigr)^{-1} \frac{\partial r(\pi)}{\partial \pi_{s',a'}}\\
&=e_s^T\bigl(I-\gamma P(\pi)\bigr)^{-1} \left( 
\frac{\partial r(\pi)}{\partial \pi_{s',a'}} 
+ \gamma\frac{\partial P(\pi)}{\pi_{s',a'}} 
\bigl(I-\gamma P(\pi)\bigr)^{-1} r(\pi) \right)\\
&=e_s^T\bigl(I-\gamma P(\pi)\bigr)^{-1} \left( R_{s',a'} e_{s'} + \gamma \frac{\partial P(\pi)}{\pi_{s',a'}} \V(\pi) \right) 
\end{align*}
where in the last equality we used $\partial r(\pi)/\partial\pi_{s',a'}=R_{s',a'}e_{s'}$ and the definition of $\V(\pi)$.
From the definition of $P(\pi)$, we have $\partial P(\pi)/\partial \pi_{s',a'} = e_{s'}P(\cdot|s',a')$, which is a rank-one matrix with $P(\cdot|s',a')$ acting as a row vector. Therefore,
\begin{align*}
\frac{\partial \V_s(\pi)}{\partial \pi_{s',a'}}
&=e_s^T\bigl(I-\gamma P(\pi)\bigr)^{-1} e_{s'} \Bigl( R_{s',a'} + \gamma P(\cdot|s',a') \V(\pi) \Bigr) 
 = \frac{1}{1-\gamma} d_{s,s'}(\pi) Q_{s',a'}(\pi),
\end{align*}
where we used the expression of $d_{s,s'}(\pi)$ in~\eqref{eqn:dsv-matrix} and the definition of $Q_{s',a'}(\pi)$.
This gives the component-wise expression for policy gradient, which leads to the aggregated form~\eqref{eqn:policy-grad-s}.

\subsection{Strong Gradient-Mapping Domination}
\label{sec:apdx:grad-map-dom}

Following the setting in Section~\ref{sec:grad-map-dom}, we define a stronger notion of gradient-mapping domination and show that it leads to geometric convergence to a global optimum.

\begin{definition}[\textbf{strong gradient-mapping domination}]
\label{def:strong-grad-dom}
Suppose $F:=f+\Psi$ where $f$ is $L$-smooth and $\Psi$ is proper, convex and closed. We say that~$F$ satisfies a \emph{strong gradient-mapping dominance} condition if there exists $\mu>0$ such that 
\begin{equation}\label{eqn:strong-grad-map-dom}
\frac{1}{2}\left\|G_L(x)\right\|_2^2 \geq\mu\bigl(F(T_L(x))-F^\star\bigr),
\qquad \forall\,x\in\dom\Psi,
\end{equation}
where $F^\star=\min_x F(x)$ and $T_L$ and $G_L$ are defined in~\eqref{eqn:T-def} and~\eqref{eqn:G-def} respectively.
\end{definition}

Consider the composite optimization problem of minimizing $F:=f+\Psi$ where $f$ is $L$-smooth and $\Psi$ is proper, convex and closed. 
If~$F$ satisfies the strong gradient-mapping domination condition, then the proximal gradient method~\eqref{eqn:prox-grad-method} converges geometrically to a global minimum. 
To see this, we simply combine the descent property~\eqref{eqn:grad-map-descent} with strong gradient-mapping dominance condition~\eqref{eqn:strong-grad-map-dom} to obtain
\[
F(x^k) - F(x^{k+1}) ~\geq~ \frac{1}{2L} \left\|G_L(x^k)\right\|_2^2
~\geq~ \frac{\mu}{L} \bigl(F(x^{k+1})-F^\star\bigr).
\]
Rearranging terms, we obtain
\[
\left(1+\frac{\mu}{L}\right) \bigl(F(x^{k+1}) - F^\star \bigr) \leq F(x^k)-F^\star.
\]
This leads to a geometric recursion and we have 
\[
F(x^{k}) - F^\star 
\leq \left(1+\frac{\mu}{L}\right)^{-k} \bigl(F(x^0)-F^\star\bigr) .
\]

\paragraph{Connections with other notions of gradient dominance.}
The classical Kurdyka-{\L}ojasiewicz (K{\L}) condition with exponent~$1/2$ \citep{Kurdyka1998} can be expressed as
\begin{equation}\label{eqn:KL-condition}
\min_{s\in\partial F(x)} \frac{1}{2}\|s\|^2 \geq \tilde\mu\bigl(F(x)-F^\star\bigr),
\qquad \forall\,x\in\dom\Psi,
\end{equation}
where $\partial F(x)$ denotes the set of subgradients (subdifferential) of~$F$ at~$x$.
\citet{Karimi2016PL} derived a proximal Polyak-{\L}ojasiewicz (P{\L}) condition
\begin{equation}\label{eqn:PL-condition}
\frac{1}{2}P_L(x) \geq \mu\bigl(F(x)-F^\star\bigr),
\qquad \forall\,x\in\dom\Psi,
\end{equation}
where
\[
P_L(x):= -2L\min_y\left\{\langle\nabla f(x), y-x\rangle + \frac{L}{2}\|y-x\|_2^2 + \Psi(y)-\Psi(x)\right\},
\]
and showed that it is equivalent to the K{\L} condition~\eqref{eqn:KL-condition} in the sense that they imply each other albeit with different constants~$\tilde{\mu}$ and~$\mu$.
Interestingly, it can be shown that there is an interlacing relationship between our definition of gradient-mapping domination and the proximal P{\L} condition:
\[
\frac{1}{2}\|P_L(x)\|_2^2
~\geq~ \frac{1}{2}\left\|G_L(x)\right\|_2^2
~\geq~ \mu \bigl(F(x)-F^\star\bigr)
~\geq~ \mu \bigl(F(x^+)-F^\star\bigr).
\]
The proximal P{\L} condition~\eqref{eqn:PL-condition} takes the first and the third terms in the above inequality chain, while our gradient-mapping dominance condition~\eqref{eqn:strong-grad-map-dom} takes the second and the fourth. 
We conjecture that these two conditions also imply each other.

\bigskip

\bibliography{policy_grad}

\begin{thebibliography}{49}
\providecommand{\natexlab}[1]{#1}
\providecommand{\url}[1]{\texttt{#1}}
\expandafter\ifx\csname urlstyle\endcsname\relax
  \providecommand{\doi}[1]{doi: #1}\else
  \providecommand{\doi}{doi: \begingroup \urlstyle{rm}\Url}\fi

\bibitem[Agarwal et~al.(2021)Agarwal, Kakade, Lee, and
  Mahajan]{Agarwal2021jmlr}
Alekh Agarwal, Sham~M. Kakade, Jason~D. Lee, and Gaurav Mahajan.
\newblock On the theory of policy gradient methods: Optimality, approximation,
  and distribution shift.
\newblock \emph{Journal of Machine Learning Research}, 22\penalty0
  (98):\penalty0 1--76, 2021.

\bibitem[Azar et~al.(2013)Azar, Munos, and Kappen]{AzarMunosKappen2013}
Mohammad~Gheshlaghi Azar, R\'emi Munos, and Hilbert~J. Kappen.
\newblock Minimax {PAC} bounds on the sample complexity of reinforcement
  learning with a generative model.
\newblock \emph{Machine Learning}, 91\penalty0 (3):\penalty0 325--349, 2013.

\bibitem[Beck(2017)]{Beck2017book}
Amir Beck.
\newblock \emph{First-Order Methods in Optimization}.
\newblock MOS-SIAM Series on Optimization. SIAM, 2017.

\bibitem[Beck and Teboulle(2003)]{BeckTeboulle2003}
Amir Beck and Marc Teboulle.
\newblock Mirror descent and nonlinear projected subgradient methods for convex
  optimization.
\newblock \emph{Operations Research Letters}, 31\penalty0 (3):\penalty0
  167--175, 2003.

\bibitem[Bellman(1957)]{Bellman1957book}
Richard Bellman.
\newblock \emph{Dynamic Programming}.
\newblock Princeton University Press, Princeton, NJ, USA, 1957.

\bibitem[Bertsekas(2012)]{Bertsekas2012bookDPOC}
Dimitri~P. Bertsekas.
\newblock \emph{Dynamic Programming and Optimal Control}, volume 2:
  \emph{Approximate Dynamic Programming}.
\newblock Athena Scientific, 4th edition, 2012.

\bibitem[Bertsekas and Tsitsiklis(1996)]{BertsekasTsitsiklis1996book}
Dimitri~P. Bertsekas and John~N. Tsitsiklis.
\newblock \emph{Neuro-Dynamic Programming}.
\newblock Athena Scientific, 1996.

\bibitem[Bhandari and Russo(2019)]{BhandariRusso2019global}
Jalaj Bhandari and Daniel Russo.
\newblock Global optimality guarantees for policy gradient methods.
\newblock arXiv preprint, arXiv:1906.01786, 2019.

\bibitem[Bhandari and Russo(2021)]{BhandariRusso2021linear}
Jalaj Bhandari and Daniel Russo.
\newblock On the linear convergence of policy gradient methods for finite mdps.
\newblock In \emph{Proceedings of The 24th International Conference on
  Artificial Intelligence and Statistics}, volume 130 of \emph{Proceedings of
  Machine Learning Research}, pages 2386--2394. PMLR, 13--15 Apr 2021.

\bibitem[Cen et~al.(2020)Cen, Cheng, Chen, Wei, and Chi]{Cen2020NPGentropy}
Shicong Cen, Chen Cheng, Yuxin Chen, Yuting Wei, and Yuejie Chi.
\newblock Fast global convergence of natural policy gradient methods with
  entropy regularization.
\newblock arXiv preprint, arXiv:2007.06558, 2020.

\bibitem[Chen and Teboulle(1993)]{ChenTeboulle1993}
Gong Chen and Marc Teboulle.
\newblock Convergence analysis of a proximal-like minimization algorithm using
  {Bregman} functions.
\newblock \emph{SIAM Journal on Optimization}, 3\penalty0 (3):\penalty0
  538--543, 1993.

\bibitem[Fazel et~al.(2018)Fazel, Ge, Kakade, and Mesbahi]{Fazel2018LQR}
Maryam Fazel, Rong Ge, Sham Kakade, and Mehran Mesbahi.
\newblock Global convergence of policy gradient methods for the linear
  quadratic regulator.
\newblock In \emph{Proceedings of the 35th International Conference on Machine
  Learning}, volume~80 of \emph{Proceedings of Machine Learning Research},
  pages 1467--1476. PMLR, 10--15 Jul 2018.

\bibitem[Ghadimi and Lan(2016)]{GhadimiLan2016}
Saeed Ghadimi and Guanghui Lan.
\newblock Accelerated gradient methods for nonconvex nonlinear and stochastic
  programming.
\newblock \emph{Mathematical Programming}, 156\penalty0 (1-2):\penalty0 59--99,
  2016.

\bibitem[Hoeffding(1963)]{Hoeffding1963}
Wassily Hoeffding.
\newblock Probability inequalities for sums of bounded random variables.
\newblock \emph{Journal of the American Statistical Association}, 58:\penalty0
  13--30, 1963.

\bibitem[Kakade(2001)]{Kakade2001NPG}
Sham Kakade.
\newblock A natural policy gradient.
\newblock In \emph{Proceedings of the 14th International Conference on Neural
  Information Processing Systems (NIPS'01)}, pages 1531--1538, 2001.

\bibitem[Kakade and Langford(2002)]{Kakade2002icml}
Sham Kakade and John Langford.
\newblock Approximately optimal approximate reinforcement learning.
\newblock In \emph{Proceedings of the 19th International Conference on Machine
  Learning (ICML)}, volume~2, pages 267--274, 2002.

\bibitem[Karimi et~al.(2016)Karimi, Nutini, and Schmidt]{Karimi2016PL}
Hamed Karimi, Julie Nutini, and Mark Schmidt.
\newblock Linear convergence of gradient and proximal-gradient methods under
  the {Polyak}-{\l}ojasiewicz condition.
\newblock In Paolo Frasconi, Niels Landwehr, Giuseppe Manco, and Jilles
  Vreeken, editors, \emph{Machine Learning and Knowledge Discovery in Databases
  (ECML PKDD 2016)}, volume 9851 of \emph{Lectur Notes in Computer Sciencce}.
  Springer, 2016.

\bibitem[Khodadadian et~al.(2021)Khodadadian, Jhunjhunwala, Varma, and
  Maguluri]{Khodadadian2021}
Sajad Khodadadian, Prakirt~Raj Jhunjhunwala, Sushil~Mahavir Varma, and
  Siva~Theja Maguluri.
\newblock On the linear convergence of natural policy gradient algorithm.
\newblock arXiv preprint, arXiv:2105.01424, 2021.

\bibitem[Konda and Tsitsiklis(2000)]{KondaTsitsiklis2000}
Vijay Konda and John Tsitsiklis.
\newblock Actor-critic algorithms.
\newblock In \emph{Advances in Neural Information Processing Systems},
  volume~12, pages 1008--1014. MIT Press, 2000.

\bibitem[Kurdyka(1998)]{Kurdyka1998}
Krzysztof Kurdyka.
\newblock On gradients of functions definable in o-minimal structures.
\newblock \emph{Annales de l'institut Fourier}, 48\penalty0 (3):\penalty0
  769--783, 1998.

\bibitem[Lan(2021)]{Lan2021pmd}
Guanghui Lan.
\newblock Policy mirror descent for reinforcement learning: Linear convergence,
  new sampling complexity, and generalized problem classes.
\newblock Preprint, arXiv:2102.00135, 2021.

\bibitem[Lazaric et~al.(2016)Lazaric, Ghavamzadeh, and Munos]{Lazaric2016}
Alessandro Lazaric, Mohammad Ghavamzadeh, and R\'emi Munos.
\newblock Analysis of classification-based policy iteration algorithms.
\newblock \emph{Journal of Machine Learning Research}, 17:\penalty0 1--30,
  2016.

\bibitem[Li et~al.(2021)Li, Wei, Chi, Gu, and Chen]{LiWeiChiGuChen2021}
Gen Li, Yuting Wei, Yuejie Chi, Yuantao Gu, and Yuxin Chen.
\newblock Softmax policy gradient methods can take exponential time to
  converge.
\newblock In \emph{Proceedings of Thirty Fourth Conference on Learning Theory},
  volume 134 of \emph{Proceedings of Machine Learning Research}, pages
  3107--3110. PMLR, 15--19 Aug 2021.

\bibitem[Li and Pong(2018)]{LiPong2018}
Guoyin Li and Ting~Kei Pong.
\newblock Calculus of the exponent of {Kurdyka}-{{\L}jasiewicz} inequality and
  its applications to linear convergence of first-order methods.
\newblock \emph{Foundations of Computational Mathematics}, 18:\penalty0
  1199--1232, 2018.

\bibitem[Liu et~al.(2019)Liu, Cai, Yang, and Wang]{LiuCaiYangWang2019NeurIPS}
Boyi Liu, Qi~Cai, Zhuoran Yang, and Zhaoran Wang.
\newblock Neural trust region/proximal policy optimization attains globally
  optimal policy.
\newblock In \emph{Advances in Neural Information Processing Systems},
  volume~32. Curran Associates, Inc., 2019.

\bibitem[Liu et~al.(2020)Liu, Zhang, Basar, and Yin]{LiuZhangBasarYin2020}
Yanli Liu, Kaiqing Zhang, Tamer Basar, and Wotao Yin.
\newblock An improved analysis of (variance-reduced) policy gradient and
  natural policy gradient methods.
\newblock In \emph{Advances in Neural Information Processing Systems},
  volume~33, pages 7624--7636. Curran Associates, Inc., 2020.

\bibitem[Mei et~al.(2020)Mei, Xiao, Szepesv{\'a}ri, and
  Schuurmans]{Mei2020icml}
Jincheng Mei, Chenjun Xiao, Csaba Szepesv{\'a}ri, and Dale Schuurmans.
\newblock On the global convergence rates of softmax policy gradient methods.
\newblock In \emph{Proceedings of the 37 th International Conference on Machine
  Learning (ICML)}, 2020.

\bibitem[Mei et~al.(2021)Mei, Gao, Dai, Szepesv{\'a}ri, and
  Schuurmans]{Mei2021icml}
Jincheng Mei, Yue Gao, Bo~Dai, Csaba Szepesv{\'a}ri, and Dale Schuurmans.
\newblock Leveraging non-uniformity in first-order non-convex optimization.
\newblock In \emph{Proceedings of the 38 th International Conference on Machine
  Learning (ICML)}, 2021.

\bibitem[Munos(2003)]{Munos2003}
R\'emi Munos.
\newblock Error bounds for approximate policy iteration.
\newblock In \emph{Proceedings of the 20th International Conference on Machine
  Learning (ICML'03)}, pages 560--567, 2003.

\bibitem[Munos(2005)]{Munos2005}
R\'emi Munos.
\newblock Error bounds for approximate value iteration.
\newblock In \emph{Proceedings of the 20th National Conference on Artificial
  Intelligence (AAAI'05)}, pages 1006--1011, 2005.

\bibitem[Nemirovski and Yudin(1983)]{NemirovskiYudin1983book}
A.~Nemirovski and D.~Yudin.
\newblock \emph{Problem Complexity and Method Efficiency in Optimization}.
\newblock Wiley Interscience, 1983.

\bibitem[Nesterov(2013)]{Nesterov2013composite}
Yurii Nesterov.
\newblock Gradient methods for minimizing composite functions.
\newblock \emph{Mathematical Programming}, 140:\penalty0 125--161, 2013.

\bibitem[Polyak(1963)]{Polyak1963}
Boris~T. Polyak.
\newblock Gradient methods for minimizing functionals.
\newblock \emph{USSR Computational Mathematics and Mathematical Physics},
  3\penalty0 (4):\penalty0 864--878, 1963.

\bibitem[Puterman(1994)]{Puterman1994book}
Martin~L. Puterman.
\newblock \emph{Markov Decision Processes: Discrete Stochastic Dynamic
  Programming}.
\newblock Wiley Series in Probability and Statistics. John Wiley and Sons,
  Inc., 1994.

\bibitem[Rockafellar(1970)]{Rockafellar1970book}
R.~Tyrrell Rockafellar.
\newblock \emph{Convex Analysis}.
\newblock Princeton University Press, 1970.

\bibitem[Scherrer(2014)]{Scherrer2014}
Bruno Scherrer.
\newblock Approximate policy iteration schemes: A comparison.
\newblock In \emph{Proceedings of the 31st International Conference on Machine
  Learning}, volume~32 of \emph{Proceedings of Machine Learning Research},
  pages 1314--1322, Bejing, China, 22--24 Jun 2014.

\bibitem[Schulman et~al.(2015)Schulman, Levine, Abbeel, Jordan, and
  Moritz]{Schulman2015TRPO}
John Schulman, Sergey Levine, Pieter Abbeel, Michael Jordan, and Philipp
  Moritz.
\newblock Trust region policy optimization.
\newblock In \emph{Proceedings of the 32nd International Conference on Machine
  Learning}, volume~37 of \emph{Proceedings of Machine Learning Research},
  pages 1889--1897, Lille, France, 07--09 Jul 2015.

\bibitem[Schulman et~al.(2017)Schulman, Wolski, Dhariwal, Radford, and
  Klimov]{Schulman2017PPO}
John Schulman, Filip Wolski, Prafulla Dhariwal, Alec Radford, and Oleg Klimov.
\newblock Proximal policy optimization algorithms.
\newblock arXiv preprint, arXiv:1707.06347, 2017.

\bibitem[Shani et~al.(2020)Shani, Efroni, and Mannor]{Shani2020aaai}
Lior Shani, Yonathan Efroni, and Shie Mannor.
\newblock Adaptive trust region policy optimization: Global convergence and
  faster rates for regularized mdps.
\newblock In \emph{The Thirty-Fourth {AAAI} Conference on Artificial
  Intelligence}, pages 5668--5675. {AAAI} Press, 2020.

\bibitem[Sidford et~al.(2018)Sidford, Wang, Wu, Yang, and
  Ye]{Sidford2018nearoptimal}
Aaron Sidford, Mengdi Wang, Xian Wu, Lin Yang, and Yinyu Ye.
\newblock Near-optimal time and sample complexities for solving markov decision
  processes with a generative model.
\newblock In \emph{Advances in Neural Information Processing Systems},
  volume~31. Curran Associates, Inc., 2018.

\bibitem[Sutton and Barto(2018)]{SuttonBarto2018}
Richard~S. Sutton and Andrew~G. Barto.
\newblock \emph{Reinforcement Learning: An Introduction}.
\newblock Adaptive Computation and Machine Learning. The MIT Press, 2nd
  edition, 2018.

\bibitem[Sutton et~al.(2000)Sutton, McAllester, Singh, and
  Mansour]{Sutton2000PolicyGrad}
Richard~S. Sutton, David McAllester, Satinder Singh, and Yishay Mansour.
\newblock Policy gradient methods for reinforcement learning with function
  approximation.
\newblock In \emph{Advances in Neural Information Processing Systems},
  volume~12, pages 1057--1063. MIT Press, 2000.

\bibitem[Szepesv\'ari and Munos(2008)]{SzepesvariMunos2008}
Csaba Szepesv\'ari and R\'emi Munos.
\newblock Finite time bounds for fitted value iteration.
\newblock \emph{Journal of Machine Learning Research}, pages 815--857, 2008.

\bibitem[Wainwright(2019)]{Wainwright2019Q}
Martin~J. Wainwright.
\newblock Variance-reduced {Q}-learning is minimax optimal.
\newblock arXiv e-Preprint, arXiv:1906.04697, 2019.

\bibitem[Williams(1992)]{Williams1992}
Ronald~J. Williams.
\newblock Simple statistical gradient-following algorithms for connectionist
  reinforcement learning.
\newblock \emph{Machine Learning}, 8:\penalty0 229--256, 1992.

\bibitem[Ye(2011)]{Ye2011MDP}
Yinyu Ye.
\newblock The simplex and policy-iteration methods are strongly polynomial for
  the markov decision problem with a fixed discount rate.
\newblock \emph{Mathematics of Operations Research}, 36\penalty0 (4):\penalty0
  593--603, 2011.

\bibitem[Yuan et~al.(2021)Yuan, Gower, and Lazaric]{YuanGowerLazaric2021}
Rui Yuan, Robert~M. Gower, and Alessandro Lazaric.
\newblock A general sample complexity analysis of vanilla policy gradient.
\newblock arXiv e-Preprint, arXiv:2107.11433, 2021.

\bibitem[Zhan et~al.(2021)Zhan, Cen, Huang, Chen, Lee, and
  Chi]{Zhan2021Regularized}
Wenhao Zhan, Shicong Cen, Baihe Huang, Yuxin Chen, Jason~D. Lee, and Yuejie
  Chi.
\newblock Policy mirror descent for regularized reinforcement learning: A
  generalized framework with linear convergence.
\newblock Preprint, arXiv:2105.11066, 2021.

\bibitem[Zhang et~al.(2020)Zhang, Koppel, Bedi, Szepesvari, and
  Wang]{Junyu2020NeurIPS}
Junyu Zhang, Alec Koppel, Amrit~Singh Bedi, Csaba Szepesvari, and Mengdi Wang.
\newblock Variational policy gradient method for reinforcement learning with
  general utilities.
\newblock In \emph{Advances in Neural Information Processing Systems},
  volume~33, pages 4572--4583. Curran Associates, Inc., 2020.

\end{thebibliography}

\end{document}